\DeclareRobustCommand{\bigtimes}{%
	\mathop{\vphantom{\sum}\mathpalette\@bigtimes\relax}\slimits@
}
\newcommand{\@bigtimes}[2]{\vcenter{\hbox{\make@bigtimes{#1}}}}
\newcommand{\make@bigtimes}[1]{%
	\sbox\z@{$\m@th#1\sum$}%
	\setlength{\unitlength}{\wd\z@}%
	\begin{picture}(1,1)
	\roundcap
	\linethickness{.17ex}
	\Line(0.01,0)(.85,.99)
	\Line(0.01,.99)(.85,0.0)
	\end{picture}%
}
\definecolor{PineGreen}{rgb}{0.0,0.47,0.44}
\definecolor{MidnightBlue}{rgb}{0.1,0.1,0.44}
\definecolor{magenta}{rgb}{1.0,0.0,1.0}
\definecolor{bl1}{HTML}{4479A1}
\definecolor{pur1}{HTML}{52196D}
\definecolor{mag1}{HTML}{2AD0F1}
\definecolor{org1}{rgb}{.92,.39.21}
\definecolor{pur2}{rgb}{.53,.47,.7}
\newcommand{\eqnum}{\refstepcounter{equation}\textup{\tagform@{\theequation}}}
\newtheorem{theorem}{Theorem}
\newtheorem*{theorem*}{Theorem}
\numberwithin{theorem}{section}
\newtheorem{proposition}[theorem]{Proposition}
\newtheorem{lemma}[theorem]{Lemma}
\newtheorem{corollary}[theorem]{Corollary}
\theoremstyle{definition}
\newtheorem{definition}[theorem]{Definition}
\theoremstyle{remark}
\newtheorem{remark}[theorem]{Remark}
\newtheorem{example}[theorem]{Example}
\newcommand{\RR}{\mathbb{R}}
\newcommand{\R}{\mathbb{R}}
\def \tr{{\rm tr}}
\renewcommand{\phi}{\varphi}
\newcommand{\cF}{\mathscr{F}}
\newcommand{\T}{^\mathsf{T}}
\newcommand{\bA}{\mathbf{A}}
\newcommand{\PC}{\text{\bf PC}}
\newcommand{\Eq}{\text{\rm Eq}}
\newcommand{\MSC}{\text{\bf MSC}}
\newcommand{\tot}{\text{\rm Tot}}
\newcommand{\set}[1]{{\left\{{#1}\right\}}}
\newcommand{\inj}{\hookrightarrow}
\begin{document}
	
	\title{Principal Components along Quiver Representations} 
	\author{Anna Seigal}
	\author{Heather A. Harrington}
    \author{Vidit Nanda}
	
	\maketitle
	
	\begin{abstract}
		Quiver representations arise naturally in many areas across mathematics. Here we describe an algorithm for calculating the vector space of sections, or compatible assignments of vectors to vertices, of any finite-dimensional representation of a finite quiver. Consequently, we are able to define and compute principal components with respect to quiver representations. These principal components are solutions to constrained optimisation problems defined over the space of sections, and are eigenvectors of an associated matrix pencil.
	\end{abstract}

	\section*{Introduction}
	
	A {\bf quiver representation} is an arrangement of vector spaces and linear maps tethered to the vertices and edges of a directed graph \cite{derksen2017introduction, schiffler2014quiver}. The quiver illustrated below will be our running example throughout the paper.
	
	\begin{center}
	\includegraphics[width=.7\textwidth]{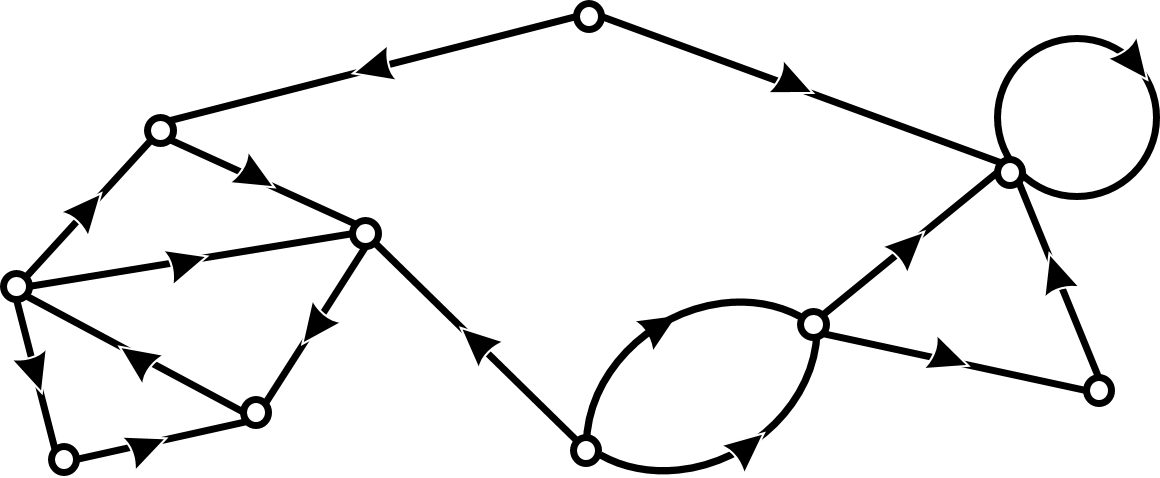}
	\end{center}
	
	\noindent Despite being relatively concrete mathematical objects, quiver representations provide a uniform framework for a host of fundamental abstract problems in linear algebra \cite{bernstein1973coxeter}.
	Isomorphisms of quiver representations can be used to characterise, for example, the Jordan normal form of matrices and the Kronecker normal form of matrix pencils.
	They also play an important role in various other fields, including the study of associative algebras \cite{auslander1997representation}, Gromov-Witten invariants \cite{gross2010quivers}, representations of Kac-Moody algebras \cite{nakajima2001quiver}, moduli stacks \cite{Toda2018moduli}, Morse theory \cite{kirwan2019morse}, persistent homology \cite{oudot2015persistence}, and perverse sheaves \cite{gelfand1996perv}, among others. 
	
	In most of these contexts, the crucial property of a given quiver representation is its {\em decomposability} into a direct sum of smaller representations. Gabriel's celebrated result \cite{gabriel1972quiver} establishes that a quiver admits finitely many (isomorphism classes of) indecomposable representations if and only if its underlying undirected graph is a union of simply-laced Dynkin diagram (i.e., type $A$, $D$ or $E$). Thus, most quivers have rather complicated sets of indecomposable representations, and are said to be of {\em wild} type. It is a direct consequence of this trifecta -- concreteness, ubiquity and generic wildness -- that ideas from disparate branches of mathematics have conversely been deployed to study representations of quivers. These include algebraic geometry \cite{kirillov2016quiver}, combinatorics \cite{derksen2011combinatorics}, differential geometry \cite{harada2011Morse, haiden2017semistability}, geometric representation theory \cite{ginzburg2009lectures}, invariant theory \cite{king1994moduli, kac1982infinite}, and multilinear algebra \cite{herschend2008tensor}. 
	
    Quiver representations have recently emerged in far more applied and computational contexts than the classical ones listed above. We are aware of three such appearances: 
    \begin{enumerate} 
    \item {\bf Cellular Sheaves:} A vector space valued sheaf defined over a cell complex \cite{curry2013sheaves, curry2016discrete} constitutes a representation of the underlying {\em Hasse diagram}; here the vertices are cells and edges arise from face inclusions. The stalks of the sheaf form vector spaces over the vertices while  restriction maps are associated to edges. 
    \item {\bf Conley theory:} {Morse decompositions} in computational dynamics \cite[Def 9.19]{harker2018computational} are representations of Conley-Morse quivers associated to discrete dynamical systems (vertices are recurrent sets and edges represent gradient flow). The linear maps of such representations arise from {\em connection matrices} \cite{franzosa1989connection}; these assemble into a chain complex that allows one to recover the homology of the phase space.
    \item {\bf Algebraic statistics:} Matrix normal models can be studied via quiver representations \cite{amendola2021invariant,derksen2021maximum}.
    The sample data gives a representation of a Kronecker quiver. The stability of the representation~\cite{king1994moduli} can then be used to characterise the existence and uniqueness of a maximum likelihood estimate in the model.
    \end{enumerate}
    \noindent We expect (and hope) that this influx of quiver theory into more applied and computational domains will continue. 
    
    \subsection*{This Paper} We consider a representation $\bA_\bullet$ of a quiver $Q$. It assigns vector spaces $\bA_v$ to each vertex $v$ of $Q$ and linear maps $\bA_e$ to each edge $e$ in $Q$. We construct a vector space $\Gamma(Q;\bA_\bullet)$ called the {\bf space of sections} of the quiver representation. An element of $\Gamma(Q;\bA_\bullet)$ selects one vector $\gamma_v$ from the vector space $\bA_v$ assigned to each vertex $v$ so that for every edge $e:u \to v$ the linear map $\bA_e$ sends $\gamma_u$ to $\gamma_v$. As such, $\Gamma(Q;\bA_\bullet)$ is a subspace of the {\em total space}  $\tot(\bA_\bullet) := \prod_v \bA_v$. The assignment 
    \[
    \bA_\bullet \mapsto \Gamma(Q;\bA_\bullet)
    \] can directly be seen to be a functor from the category of $Q$-representations to the category of vector spaces. We do not expect this functor to immediately answer any deep questions regarding (in)decomposability of quiver representations. Rather, we hope that the space of sections will become a useful and practical tool for those who encounter quiver representations in applied and computational contexts.

    Our first contribution is an algorithm for computing the space of sections for any finite-dimensional representation of a finite quiver. This is of some relevance even to those who have no warm feelings for quiver representations, since it is a purely categorical procedure for computing the limit (i.e., the universal cone) of a diagram in the category of vector spaces. With minor modifications, it can be made to work for diagrams valued in any abelian category that has computable products and equalisers. There are two 
    types of restriction imposed on
    the space of sections: the first of these arises from directed cycles, where we are forced to restrict to a fixed point space of an endomorphism; and the second is the presence of multiple incoming edges at a vertex, where we are forced to restrict to an equaliser. 
        None of these difficulties arise when the quiver is a directed rooted tree.
        
        Our algorithm consists of two steps --- the first step removes all directed cycles and updates the representation $\bA_\bullet$ accordingly; and the second step replaces this acyclic quiver with a directed rooted tree, again updating the representation. The result is a new representation $\bA^+_\bullet$ of a rooted directed tree $T^+$, which has all the same vertices as $Q$ (plus an additional root vertex) and satisfies $\bA^+_v \subset \bA_v$ at each vertex $v$. 
    
    Here is our first main result.
    
    \begin{theorem*} [A] The space of sections $\Gamma(Q;\bA_\bullet)$ is the image of the map 
    \[
    F:\bA^+_\rho \longrightarrow \tot(\bA_\bullet),
    \] obtained by composing the linear maps assigned by the quiver representation $\bA^+_\bullet$ along the unique path in the rooted directed tree $T^+$ from the root $\rho$ to each other vertex.
    \end{theorem*}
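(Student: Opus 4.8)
The plan is to peel off the two reduction steps of the algorithm one at a time, checking that neither alters the space of sections, and then to verify the statement directly in the terminal case of a rooted directed tree. Write $Q \rightsquigarrow Q_1 \rightsquigarrow T^+$ for the passage from $Q$ to its acyclic reduction $Q_1$ and then to the rooted tree $T^+$, with representations $\bA_\bullet \rightsquigarrow \bA^1_\bullet \rightsquigarrow \bA^+_\bullet$; recall that $Q_1$ has the same vertices as $Q$, that $T^+$ has exactly one extra vertex (the root $\rho$), and that $\bA^1_v \subseteq \bA_v \supseteq \bA^+_v$ at every vertex of $Q$. I would establish three claims: (i) the inclusions $\bA^1_v \subseteq \bA_v$ identify $\Gamma(Q;\bA_\bullet)$ with $\Gamma(Q_1;\bA^1_\bullet)$ as subspaces of $\tot(\bA_\bullet)$; (ii) restricting a section of $T^+$ to the vertices of $Q_1$ carries $\Gamma(T^+;\bA^+_\bullet)$ onto $\Gamma(Q_1;\bA^1_\bullet)$; and (iii) for the rooted tree $T^+$ the map $F$ has image exactly $\Gamma(T^+;\bA^+_\bullet)$, viewed inside $\tot(\bA_\bullet)$ by dropping the root coordinate and applying the vertex inclusions. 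Chaining (iii), (ii) and (i) yields $\im F = \Gamma(Q;\bA_\bullet)$.

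Step (iii) is the structural, easy part. In a rooted directed tree every non-root vertex $v$ receives exactly one edge and is joined to $\rho$ by a unique directed path $\pi_v$; let $\bA^+_{\pi_v}$ be the composite of the edge maps along $\pi_v$. The section equations $\gamma_v = \bA^+_e(\gamma_u)$ then collapse, by induction on the distance from $\rho$, to the single condition $\gamma_v = \bA^+_{\pi_v}(\gamma_\rho)$, so that $x \mapsto (\bA^+_{\pi_v}(x))_v$ is a linear isomorphism from $\bA^+_\rho$ onto $\Gamma(T^+;\bA^+_\bullet)$; forgetting the $\rho$-coordinate turns this isomorphism into the map $F$, whence $\im F = \Gamma(T^+;\bA^+_\bullet)$ once a section of $T^+$ is recorded by its values on the vertices of $Q$.

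Steps (i) and (ii) both rest on a single observation: if every section of the current representation is already known to have its $v$-coordinate in a subspace $W \subseteq \bA_v$, and the incident edge maps restrict and corestrict consistently to $W$, then replacing $\bA_v$ by $W$ changes neither the set of sections nor, up to the vertex inclusions, the ambient total space. For (i) the subspace at a vertex $v$ lying on a directed cycle is the fixed-point space of the endomorphism obtained by traversing that cycle: any section satisfies $\gamma_v = (\bA_{e_k}\circ\cdots\circ\bA_{e_1})(\gamma_v)$, and advancing one edge along the cycle sends this fixed-point space into the corresponding one at the next vertex, so the reduction is consistent all the way around; once every vertex of the cycle has been cut down, a chosen back-edge may be deleted without losing sections, and iterating over all directed cycles produces $Q_1$. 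For (ii) the subspace at a vertex with incoming edges $e_1,\dots,e_m$ is the equaliser of the corresponding maps into $\bA^1_v$ (expressed in terms of the already-reduced data at the source vertices); after passing to these equalisers one discards all but one incoming edge at each vertex and attaches a root $\rho$ feeding the sources, producing $T^+$.

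The delicate point, and the one I expect to be the main obstacle, is the ``lifting'' half of (ii): after the redundant incoming edges have been deleted, one must show that a section of the resulting tree automatically satisfies the equations attached to the deleted edges, i.e.\ that imposing the equaliser conditions along the retained spanning tree already forces agreement along every discarded edge. This is precisely why the vertices of $Q_1$ must be handled in a topological order, so that the constraint at each vertex is phrased in terms of values at predecessors that have already been reconciled, and it is also where the exact definition of $\rho$ and its outgoing edges --- chosen so that sections of $Q_1$ are faithfully parametrised by the values they induce at $\rho$ --- has to be matched against the construction in the algorithm. The corresponding lifting for (i) is by contrast immediate, since on the fixed-point spaces traversing a deleted back-edge coincides with going once around the cycle, which is the identity.
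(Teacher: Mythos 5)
Your proposal takes the same route as the paper: Theorem (A) is proved (as Theorem \ref{thm:mainrev}) by an acyclic reduction preserving sections (Proposition \ref{prop:acycred}), then an arboreal replacement (Corollary \ref{cor:arbrep}), and finally the arborescence base case (Proposition \ref{prop:arbsec}), which is exactly your chain (i)$\to$(ii)$\to$(iii). The paper merely packages your cycle-by-cycle fixed-point step via ear decompositions of strongly connected components (Lemma \ref{lem:scred}, Corollary \ref{cor:scarb}) together with the constraint spaces $\Lambda_v$ of Definition \ref{def:LambdavR}, and your equaliser step via flow spaces (Definition \ref{def:flow}), so there is no substantive difference in the argument.
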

    
   \noindent Although the constructions of $T^+$ and $\bA_\bullet^+$ are explicit and readily implementable on a computer, they require making several intermediate choices. Each such choice is liable to produce a different $F$, but its image is always $\Gamma(Q;\bA_\bullet)$ regardless of these choices.
    
   Our second contribution takes place in the realm of quiver representations valued in real vector spaces; in this case, a map $F$ as described in Theorem (A) can be represented by an $n \times d$ full-rank real matrix, where $n$ and $d$ are the dimensions of $\tot(\bA_\bullet)$ and $\Gamma(Q;\bA_\bullet)$ respectively. Using this matrix, we define the {\bf principal components} of any (generic, mean-centered) finite set $D$ of vectors in $\R^n \simeq \tot(\bA_\bullet)$ with respect to the quiver representation~$\bA_\bullet$. As with ordinary principal components, the starting point is the $n \times n$ sample covariance matrix $S$ of the vectors in $D$. Next, we consider for each $r \leq d$ the variational problem of maximising the trace $\tr(X\T SX)$ over the set of all $n \times r$ matrices $X$ that satisfy $X\T X = \text{id}$, and whose columns are constrained to lie in the image of $F$, i.e., in $\Gamma(Q;\bA_\bullet)$. 
   There is a {generically} unique solution, obtained by iteratively incrementing $r$ from $1$ to $d$, and the span of its $r$-th column is the $r$-th principal component of $D$ along $\bA_\bullet$, denoted $\PC_r(D;\bA_\bullet)$. Unlike ordinary principal components, the $\PC_r(D;\bA_\bullet)$ are not spanned by eigenvectors of $S$ in general. 
   
   The second main result of this paper is that the quiver principal components $\PC_r(D;\bA_\bullet)$ do in fact admit a spectral interpretation. 
   
   \begin{theorem*}[B] For each $1 \leq r \leq d$, the $r$-th principal component $\PC_r(D;\bA_\bullet)$ is spanned by $Fu_r$, where $u_r$ is the eigenvector of the matrix pencil $F\T S F - \lambda(F\T F)$ corresponding to its $r$-th largest eigenvalue.
   \end{theorem*}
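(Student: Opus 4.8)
The plan is to reduce the constrained optimisation problem over $\tot(\bA_\bullet)$ to a generalised eigenvalue problem on $\R^d$ by pulling everything back along $F$. Since $F$ is a full-rank $n \times d$ matrix it is injective, so every $n \times r$ matrix $X$ whose columns lie in $\im(F) = \Gamma(Q;\bA_\bullet)$ can be written uniquely as $X = FY$ for a $d \times r$ matrix $Y$. Under this substitution the constraint $X\T X = \mathrm{id}$ becomes $Y\T(F\T F)Y = \mathrm{id}$ while the objective becomes $\tr(X\T S X) = \tr\!\big(Y\T(F\T S F)Y\big)$. Setting $A := F\T S F$ and $B := F\T F$ --- where $A$ is symmetric positive semidefinite and $B$ is symmetric positive definite, again because $F$ is injective --- the problem becomes: maximise $\tr(Y\T A Y)$ over $d \times r$ matrices $Y$ with $Y\T B Y = \mathrm{id}$.

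I would then diagonalise the pencil. Let $B^{1/2}$ be the symmetric positive-definite square root of $B$ and put $Z := B^{1/2}Y$, so the constraint reads $Z\T Z = \mathrm{id}$ and the objective is $\tr(Z\T \widetilde A Z)$ with $\widetilde A := B^{-1/2}A B^{-1/2}$ symmetric positive semidefinite. By the Ky Fan maximum principle (the trace analogue of the Rayleigh quotient bound), $\tr(Z\T \widetilde A Z)$ over matrices $Z$ with $r$ orthonormal columns is maximised by the sum of the $r$ largest eigenvalues of $\widetilde A$, attained exactly when the columns of $Z$ span the sum of the top $r$ eigenspaces of $\widetilde A$. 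Now $\det(\widetilde A - \lambda\,\mathrm{id}) = \det(B)^{-1}\det(A - \lambda B)$, so the eigenvalues of $\widetilde A$ are precisely the roots of $\det(F\T S F - \lambda F\T F)$; moreover $\widetilde A w = \lambda w$ iff $u := B^{-1/2}w$ satisfies $Au = \lambda Bu$, so the pencil eigenvectors and the eigenvectors of $\widetilde A$ correspond under $Y = B^{-1/2}Z$.

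It remains to match this with the greedy definition of $\PC_r(D;\bA_\bullet)$. A standard argument shows that for generic mean-centred $D$ the pencil $F\T S F - \lambda F\T F$ has $d$ distinct eigenvalues $\lambda_1 > \cdots > \lambda_d$ (the bad set of covariance matrices is a proper subvariety, and $S \mapsto F\T S F$ is a linear surjection onto symmetric $d \times d$ matrices since $F$ has a left inverse), so each pencil eigenvector $u_r$ is unique up to a nonzero scalar. Incrementing $r$ from $1$ to $d$ and solving the reduced problem one column at a time then forces the $r$-th column of the optimal $Y$ to be a scalar multiple of $u_r$: once the first $r-1$ columns are fixed so that their images under $B^{1/2}$ span the top $r-1$ eigenspaces, the $B$-orthogonality constraint restricts the $r$-th column to the remaining eigenspaces and maximality selects the next one in decreasing order of eigenvalue. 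Hence the $r$-th column of the optimal $X = FY$ spans $Fu_r$, which is by definition $\PC_r(D;\bA_\bullet)$. The main obstacle is the bookkeeping in this final step --- verifying that the iterative, column-by-column optimisation really does reproduce the ordered generalised eigenbasis, and that genericity of $D$ is exactly what makes each $\PC_r$ a well-defined line; the pullback reduction and the Ky Fan principle in the earlier steps are essentially routine.
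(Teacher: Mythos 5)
Your proof is correct, and it reaches the conclusion by a genuinely different route than the paper. The paper factors $S = M\T M$, then applies the generalised singular value decomposition (GSVD) simultaneously to the matrices $MF$ and $F$, obtaining an invertible $G$ with $F\T S F = G\T\Delta^2 G$ and $F\T F = G\T\Sigma^2 G$; the change of variables $Y \mapsto \Sigma_d G Y$ then reduces \eqref{eqn:qPCs_param} to the ordinary PCA problem for a diagonal matrix, and the columns of $G^{-1}$ are identified as the generalised eigenvectors. You instead ``whiten'' the pencil using the symmetric square root of $B = F\T F$, reducing to the ordinary symmetric eigenproblem for $\widetilde A = B^{-1/2}(F\T S F)B^{-1/2}$, and then invoke the Ky Fan maximum principle. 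Both arguments achieve the same thing --- a simultaneous diagonalisation of $F\T S F$ and $F\T F$ with a congruence, from which the pencil eigenvectors fall out --- but the GSVD route never forms $B^{-1/2}$ (which the paper prefers on numerical grounds; cf.\ the remark at the end of \S\ref{sec:gsvd} about the condition number of $(F\T F)^{-1}$) and ties directly into the constrained-PCA literature cited there, while your route is more elementary and self-contained, needing only the spectral theorem and Ky Fan's inequality rather than a GSVD existence theorem. Your explicit genericity argument via the surjectivity of $S \mapsto F\T S F$ is also more detailed than the paper's, which merely asserts genericity when passing to distinct $\lambda_i$; the two are consistent. One minor point worth keeping in mind: the paper's Definition~\ref{def:qPCs} builds the optimal $X_*$ one column at a time, so the iterative bookkeeping you flag at the end is exactly what the definition supplies and does not require a separate argument --- your concern there can be discharged by citing the definition.
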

 
We see from the matrix pencil in Theorem (B) that the principal components along a quiver representation intertwine the properties of $D$ (via the sample covariance matris $S$) with those of quiver $Q$ (via the map $F$ to the space of sections). These principal components find directions of maximum variation among vectors in $D \subset \RR^n$ that respect certain linear dependencies. The coordinates in $\RR^n$ can be thought of as partitioned into blocks (one per vertex of the quiver); these blocks are related by the linear maps of the quiver representation. In this way, the principal components along the quiver representation interpolate between concatenating ordinary principal components from individual blocks, and ordinary principal components in the whole space $\RR^n$. We will mostly assume that the linear maps are fixed in advance, but we will briefly discuss approaches to learning them from the set $D$. 
   
   \subsection*{Related Work} 
   The first half of this work is inspired by the study of cellular sheaves \cite{curry2013sheaves}, which functorially assign vector spaces to cells and linear maps to incidence relations in a finite cell complex. The space of sections of a cellular sheaf $\mathscr{S}$ defined over an undirected graph $G$ is isomorphic to the zeroth sheaf cohomology group $\textbf{H}^0(G;\mathscr{S})$, which is readily computable \cite{curry2016discrete}. We can turn any representation $\bA_\bullet$ of a quiver $Q$ into a cellular sheaf over the underlying undirected graph by  replacing each edge-indexed linear~map \[\bA_u \stackrel{\bA_e}{\longrightarrow} \bA_v\] by a corresponding zigzag of the form \[\bA_u \stackrel{\bA_e}{\longrightarrow} \bA_v \stackrel{\text{id}}{\longleftarrow} \bA_v.\] Thus, each edge inherits the vector space assigned to its target vertex. Computing zeroth cohomology of this sheaf furnishes an alternative to Theorem (A) for calculating $\Gamma(Q;\bA_\bullet)$. However, this cohomological alternative suffers from two significant drawbacks --- first, the insertion of these zigzags is quite inconvenient for our purposes of testing compatibility of sections across directed paths in the original quiver. And second, the duplication of vector spaces over the edges leads to unnecessarily large matrices, and hence incurs a larger computational cost.
   
   A central focus of the second half of this paper is the study of linearly constrained principal components, which dates back at least to~\cite[Section 11]{rao1964use}. It is referred to as {\em constrained PCA} in~\cite[Section 7.1]{diamantaras1996principal} and~\cite{takane2001constrained, takane1991principal}. Its statistical implications are discussed in~\cite{hunter2002constrained} and \cite[Section 5.4]{takane2001constrained}. For an example of constrained PCA occurring in a biological context, see~\cite{huang2020statistical}. It is important to note that the principal components $\PC_r(D;\bA_\bullet)$ introduced in this paper do not constitute a low-rank approximation of the representation $\bA_\bullet$. Such approximation of related multi-linear objects appears in ~\cite{brake2019singular}, where the authors find the singular value decomposition of a finite chain complex, and in the study of orthogonal decomposition of tensor networks~\cite{halaseh2021orthogonal}. 
   A study of star quivers for parameter estimation in integrated PCA~\cite{tang2021integrated} appears in \cite{franks2021IPCA}.
   
   We comment on connections to linear neural networks in Section~\ref{sec:learn}. Quiver representations appear in the context of 
    neural network architectures in \cite{armenta2020representation, jeffreys2021kahler}, though these are not usual quiver representations due to the presence of nonlinear activation functions.
 
    \subsection*{Organisation} The remainder of this paper is divided into eight short sections. In \S \ref{sec:qreps}, we define quiver representations, their sections, and some elementary properties thereof. 
    
    \S \ref{sec:scq} is devoted to the task of using the ear decomposition to compute the sections of strongly-connected quivers. \S \ref{sec:acyclic} uses the results of \S \ref{sec:scq} to construct, from any given quiver representation, a sub-representation of an acyclic subquiver that has the same space of sections. In \S \ref{sec:arboreal} we describe how to further modify this acyclic subquiver into a rooted directed tree and update the overlaid representation to preserve the space of sections. These intermediate results are assembled in \S \ref{sec:dimension} to provide a proof of Theorem (A); we also give lower bounds on the dimension of the space of sections, and provide pseudocode for our algorithm along with a computational complexity analysis.
    
    Principal components along quiver representations are defined in \S \ref{sec:PCs} via three optimisation problems; we show here that all three give the same answer. In \S \ref{sec:gsvd} we use a generalisation of the singular value decomposition to establish Theorem (B). And finally, \S \ref{sec:learn} discusses the problem of learning the linear maps of a quiver representation from finite samples of vectors living in the total space.
    
    {\footnotesize
    \subsection*{Acknowledgements}
AS and HAH thank Julian Knight, and AS thanks Visu Makam, for helpful discussions. VN is grateful to Frances Kirwan for timely quiver-theoretic advice. 
We thank Darij Grinberg for helpful comments on the first version of this paper.
We thank the anonymous referees and editor for useful suggestions that improved the paper.
AS, HAH, and VN are members of the UK Centre for Topological Data Analysis and thank the Mathematical Institute at the University of Oxford. HAH and VN acknowledge funding from EPSRC grant EP/R018472/1. HAH gratefully acknowledges funding from EPSRC EP/R005125/1 \& EP/T001968/1, the Royal Society RGF$\backslash$EA$\backslash$201074 and UF150238, and Emerson Collective. }

	\section{Quiver Representations and Sections}\label{sec:qreps}
	
	A {quiver} $Q$ consists of a finite set $V$ whose elements are called {vertices}, a finite set $E$ whose elements are called {edges}, and two maps $s,t:E \to V$ called the {source} and {target} map respectively. It is customary to illustrate quivers by drawing points for vertices and arrows (from source to target) for edges. A {path} in $Q$ is an ordered finite sequence of distinct edges $p = (e_1,e_2,\ldots,e_k)$ with disjoint sources (i.e., $s(e_i) \neq s(e_j)$ when $i \neq j$) so that $s(e_{i+1})=t(e_i)$ holds for every $1 \leq i < k$:
	\[
	\xymatrixcolsep{.6in}
	\xymatrix{
		\bullet \ar@{->}[r]^{e_1} & \bullet \ar@{->}[r]^{e_2} & \cdots \ar@{->}[r]^{e_{k-1}} & \bullet \ar@{->}[r]^{e_k} & \bullet
	}
	\]
	The source and target maps extend from edges to paths via $s(p) = s(e_1)$ and $t(p) = t(e_k)$. We call $p$ a {\em cycle} if $s(p) = t(p)$, and call $Q$ {\em acyclic} if it does not admit any cycles.
	
	A {\bf representation} of $Q$ comprises an assignment $\bA_\bullet$ of a finite-dimensional vector space $\bA_v$ to every vertex $v$ in $V$ and a linear map $\bA_e:\bA_{s(e)} \to \bA_{t(e)}$ to every edge $e$ in $E$. We will remain agnostic to the choice of underlying field until Section \ref{sec:PCs}. Using the data of $\bA_\bullet$, one can associate to each path $p = (e_1,\ldots,e_k)$ the map $\bA_p:\bA_{s(p)} \to \bA_{t(p)}$ via
	\begin{equation}
	\label{eqn:pathp}
	\bA_p := \bA_{e_k} \circ \bA_{e_{k-1}} \circ \cdots \circ \bA_{e_2} \circ \bA_{e_1}.
	\end{equation}
	The {\em total space} of $\bA_\bullet$ is the direct product 
	\[
	\tot(\bA_\bullet) := \prod_{v \in V} \bA_v.
	\]
	The following terminology has been borrowed from analogous notions that arise in the study of sheaves and vector bundles.
	
	\begin{definition}\label{def:section}
		Let $\bA_\bullet$ be a representation of a quiver $Q = (s,t:E \to V)$. A {\bf section} of $\bA_\bullet$ is an element $\gamma = \set{\gamma_v \in \bA_v \mid v \in V}$ in $\tot(\bA_\bullet)$ satisfying the {\em compatibility} requirement $\gamma_{t(e)} = \bA_e (\gamma_{s(e)})$ across each edge $e$ in $E$.    
	\end{definition}

	The set of all sections of $\bA_\bullet$ is a vector subspace of $\tot(\bA_\bullet)$, which we denote by $\Gamma(Q;\bA_\bullet)$.
	The explicit computation of the space of sections $\Gamma(Q;\bA_\bullet)$, for any quiver $Q$ and representation $\bA_\bullet$, is one of the central objectives of this work. 
	
	\begin{remark}
	The product of general linear groups $\text{G} = \prod_v \text{GL}(\bA_v)$ acts on $\bA_\bullet$ by change of basis: given any $g = \set{g_v \in \text{GL}(\bA_v) \mid v \in V}$, the new representation $g\bA_\bullet$ assigns
	\[
	(g\bA)_v = \bA_v \qquad \text{and} \qquad (g\bA)_e = g_{t(e)} \circ \bA_e \circ g_{s(e)}^{-1}.
	\]
	This action descends to the space of sections via $\gamma \mapsto g\gamma$, where $(g\gamma)_v = g_v \gamma_v$, and so we have an isomorphism $	\Gamma(Q;\bA_\bullet) \simeq \Gamma(Q;g\bA_\bullet)$
	for every $g \in \text{G}$. In fact, a purely formal argument shows that the assignment $\bA_\bullet \mapsto \Gamma(Q;\bA_\bullet)$ is a functor from the category of representations of a fixed $Q$ to the category of vector spaces. Here a morphism $\cF_\bullet:\bA_\bullet \to \bA'_\bullet$ of $Q$-representations is a collection of $V$-indexed linear maps $\cF_v:\bA_v \to \bA'_v$ which commute with the edge-maps, i.e., for each $e \in E$ we have 
	\begin{equation}
	    \label{eqn:functor}
	    \bA'_e \circ \cF_{s(e)}  = \cF_{t(e)} \circ \bA_e.
	\end{equation}
Each section $\gamma \in \Gamma(Q;\bA)$ is sent by $\cF_\bullet$ to a section $\cF\gamma$ of $\bA'_\bullet$ prescribed by $(\cF\gamma)_v = \cF_v(\gamma_v)$, since applying~\eqref{eqn:functor} to $\gamma_{s(e)}$ gives the desired compatibility across each edge $e$:
\[
\bA'_e \circ \cF_{s(e)}(\gamma_{s(e)}) = \cF_{t(e)} \circ \bA_e (\gamma_{s(e)}) = \cF_{t(e)} \gamma_{t(e)}.
\]	\end{remark}

	Compatibility across edges imposes severe constraints on sections, even in the simplest of examples, when the underlying quiver $Q$ contains cycles or vertices with multiple incoming edges. 

	\begin{example}\label{ex:jordan}
		Consider the quiver that consists of a single vertex $v$ and a single edge $e$ with $s(e) = v = t(e)$. The space of sections of any representation $\bA_\bullet$ is the subspace of $\bA_v$ fixed by $\bA_e$, i.e., the eigenspace corresponding to eigenvalue $1$. 
	\end{example}
	
\begin{example}\label{ex:kronecker} The space of sections of a representation $\bA_\bullet$ of the $2$-Kronecker quiver, pictured below, is isomorphic to $\ker(\bA_e - \bA_f)$.
	\[
	\xymatrixcolsep{.6in}
	\xymatrix{
		u \ar@/^1.5pc/[r]^{e}\ar@/_1.5pc/[r]_{f} & v
	}
	\]
  \end{example}
	
	In sharp contrast, sections are far less constrained when the vertices of $Q$ admit at most one incoming edge. 
	
	\begin{example}
	\label{ex:two_arrow}
	Given vector spaces $U,V,W$ along with linear maps $A:V \to U$ and $B:V \to W$, the sections of the quiver representation
	\[
	\xymatrixcolsep{.6in}
	\xymatrix{
		U & V \ar@{->}[l]_-{A} \ar@{->}[r]^-{B} &W.
	}
	\]
	are triples of the form $\gamma = (Ax,x,Bx)$ for $x$ in $V$. 
	\end{example}
	
	More generally, consider the case where $Q$ admits a distinguished vertex {$\rho$} in $V$ called the {\em root} so that for each other vertex {$v \neq \rho$} there is a unique path $p[v]$ in $Q$ from $\rho$ to $v$. Quivers satisfying this unique path property are studied in various contexts and hence have many names --- these include {\em out-trees}, {\em out-branchings}, {\em directed rooted trees}, and (the far more scenic) {\bf arborescences} \cite[Chapter 9]{bangjensen2009digraphs}.

	\begin{proposition}\label{prop:arbsec}
		Let $\bA_\bullet$ be a representation of an arborescence $Q$ with root vertex {$\rho$}. The space of sections $\Gamma(Q;\bA_\bullet)$ is isomorphic to $\bA_\rho$, with every section $\gamma$ uniquely determined by the vector $x = \gamma_\rho$ in $\bA_\rho$, via
		\[
		\gamma_v = \bA_{p[v]}(x),
		\] 
		where $p[v]$ is the unique path in $Q$ from $\rho$ to $v \neq \rho$.
	\end{proposition}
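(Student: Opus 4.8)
The plan is to define the candidate isomorphism explicitly and verify it is a linear bijection by exhibiting a two-sided inverse. First I would define a linear map $\Phi : \bA_\rho \to \tot(\bA_\bullet)$ by sending $x \in \bA_\rho$ to the tuple $\gamma = \set{\gamma_v \mid v \in V}$ given by $\gamma_\rho = x$ and $\gamma_v = \bA_{p[v]}(x)$ for $v \neq \rho$, where $p[v]$ is the unique path from $\rho$ to $v$ guaranteed by the arborescence hypothesis. Linearity of $\Phi$ is immediate since each $\bA_{p[v]}$ is linear. The two things to check are that $\Phi$ lands in $\Gamma(Q;\bA_\bullet)$, and that the resulting map onto $\Gamma(Q;\bA_\bullet)$ is bijective.

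Next I would verify that $\Phi(x)$ is genuinely a section, i.e., that $\gamma_{t(e)} = \bA_e(\gamma_{s(e)})$ for every edge $e$. Write $u = s(e)$ and $v = t(e)$. The key combinatorial observation is that in an arborescence the unique path $p[v]$ from $\rho$ to $v$ must be exactly $p[u]$ followed by the edge $e$: indeed $p[u]$ extended by $e$ is a path from $\rho$ to $v$, and by uniqueness it equals $p[v]$. (Here one must also handle the degenerate case $u = \rho$, where $p[u]$ is empty and $p[v] = (e)$, and note that no edge can have target $\rho$ since that would create a second path to $\rho$ — or rather, if such an edge existed the unique-path property would already fail for its source; in any case $\gamma_\rho = x$ and $\bA_e$ composed appropriately still works out.) Using the composition formula~\eqref{eqn:pathp}, $\bA_{p[v]} = \bA_e \circ \bA_{p[u]}$, so $\gamma_v = \bA_{p[v]}(x) = \bA_e(\bA_{p[u]}(x)) = \bA_e(\gamma_u)$, which is precisely the compatibility condition.

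For bijectivity, I would note that the composition of $\Phi$ (now viewed as a map $\bA_\rho \to \Gamma(Q;\bA_\bullet)$) with the projection $\tot(\bA_\bullet) \to \bA_\rho$ onto the root coordinate is the identity on $\bA_\rho$, which gives injectivity of $\Phi$ immediately. For surjectivity, take any section $\gamma \in \Gamma(Q;\bA_\bullet)$ and set $x = \gamma_\rho$; I claim $\gamma = \Phi(x)$, i.e., $\gamma_v = \bA_{p[v]}(x)$ for all $v$. This follows by induction on the length of $p[v]$: the base case $v = \rho$ is the definition of $x$, and for the inductive step, if $p[v] = (e_1,\ldots,e_k)$ with $w = t(e_{k-1}) = s(e_k)$, then $p[w] = (e_1,\ldots,e_{k-1})$ by the uniqueness argument above, so $\gamma_w = \bA_{p[w]}(x)$ by induction, and applying compatibility across $e_k$ gives $\gamma_v = \bA_{e_k}(\gamma_w) = \bA_{e_k}(\bA_{p[w]}(x)) = \bA_{p[v]}(x)$. (An ordering of vertices by path length, which exists since an arborescence is finite and acyclic, makes this induction precise.)

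I do not anticipate a serious obstacle here — the statement is essentially a bookkeeping exercise — but the one point requiring care is the combinatorial lemma that prefixes of the canonical paths are themselves canonical, i.e., $p[t(e)] = p[s(e)] \cdot e$ for every edge $e$. This rests on the \emph{uniqueness} half of the arborescence definition and is what makes both the "is a section" check and the surjectivity induction go through; everything else is routine linear algebra and the functoriality of path composition recorded in~\eqref{eqn:pathp}.
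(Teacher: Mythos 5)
The paper states Proposition~\ref{prop:arbsec} without proof, treating it as routine, so there is no paper argument to compare against; your proof fills in exactly the gap the authors left implicit. Your argument is correct and is the natural one: the map $x \mapsto (\bA_{p[v]}(x))_v$ is linear, lands in $\Gamma(Q;\bA_\bullet)$ because the canonical path to $t(e)$ is the canonical path to $s(e)$ extended by $e$ (your key combinatorial lemma, which follows from uniqueness of paths), is injective because projecting to the $\rho$-coordinate recovers $x$, and is surjective by induction on path length using compatibility across the last edge. One small caveat worth tightening if you were to write this out formally: the parenthetical remark about edges targeting $\rho$ is slightly garbled --- the cleanest way to dispose of this is to observe that in an arborescence $\rho$ has no incoming edge (otherwise concatenating $p[s(e)]$ with $e$ would give a path from $\rho$ to $\rho$, and for any vertex $w$ on that path one could produce two distinct paths from $\rho$ to $w$, contradicting uniqueness). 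With that cleaned up, the argument is complete and essentially the unique sensible proof of this statement.
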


	Over the next three sections, we will describe an algorithm to compute $\Gamma(Q;\bA)$ for any given representation $\bA_\bullet$ of an arbitrary quiver $Q$. 

	\begin{remark} \label{rmk:whyalg} As described in Definition \ref{def:section}, an edge $e: u \to v$ of the quiver $Q$ imposes $\dim \bA_{s(e)}$ linear constraints (which may not be independent) on $\tot(\bA_\bullet)$. The space of sections $\Gamma(Q;\bA_\bullet)$ is the subspace that satisfies all such constraints, the kernel of a matrix of~size
	    \[
	    \left(\sum_{e \in Q} \dim \bA_{t(e)} \right) \times \dim \tot(\bA_\bullet).
	    \]
	    In principle, this kernel may be computed directly via Gaussian elimination. We take an alternative approach, which makes use of the structure of $Q$, for two compelling reasons:
	\begin{enumerate}
	    \item Working in the space $\tot(\bA_\bullet)$ 
	    quickly becomes prohibitive when the number of vertices or edges of the quiver is large. In contrast, our approach computes the space of sections by performing Gaussian elimination on much smaller matrices. For a thorough complexity analysis, see Section \ref{ssec:complexity}.
	    \item Our approach extends the notion of a
	    spanning arborescence of a quiver to the setting of quiver representations, as follows. Our algorithm constructs a new quiver $Q^+$ with new representation $\bA_\bullet^+$. Here $Q^+$ is an arborescence obtained by adjoining a new root vertex $\rho$ to $Q$ and passing to a spanning arborescence of this union, while $\bA^+_\bullet$ is a representation of $Q^+$ with $\bA^+_v \subset \bA_v$ for each non-root vertex $v$. Crucially, the space of sections $\bA^+_\rho \simeq \Gamma(Q^+;\bA_\bullet^+)$ is isomorphic to $\Gamma(Q;\bA_\bullet)$. We hope that our construction of the pair $(Q^+,\bA^+_\bullet)$ will be of independent interest.
	\end{enumerate}
	\end{remark}

	\section{Sections of Strongly Connected Quivers}
	\label{sec:scq} 
	
	A quiver $Q = (s,t:E \to V)$ is called {\bf strongly connected} if for any pair of vertices $v,v'$ in $V$ there is at least one path from $v$ to $v'$. The simplest examples of strongly connected quivers are cycles, but such quivers can be far more intricate. In this section, we study sections of strongly connected quivers. We will use a particular decomposition of such quivers into a union of simpler quivers. To this end, note that a {\bf subquiver} $Q' \subset Q$ is a choice of subsets $V' \subset V$ and $E' \subset E$ so that the restrictions of $s$ and $t$ to $E'$ take values in $V'$. For example, every path $(e_1,\ldots,e_k)$ in $Q$ forms a subquiver with 
	\[
	V' = \set{s(e_1), \, s(e_2), \, \ldots, \, s(e_k), \, t(e_k) } \,  \text{ and } \, E' = \set{e_1, \ldots, e_k },
	\]
    where $[k] = \set{1,\ldots,k}$.
    In the special case where a subquiver $Q'$ comes from a path in $Q$, we define its source and tail $s(Q')$ and $t(Q')$ to be the source and tail of that path. Here is the decomposition of interest \cite[Sec 5.3]{bangjensen2009digraphs}.
	
	\begin{definition}\label{def:eardecomp}
		An {\bf ear decomposition} $Q_\bullet$ of $Q$ is an ordered sequence of $c \geq 1$ subquivers $\set{Q_i=(s_i,t_i:E_i \to V_i) \mid i \in [c]}$ of $Q$ subject to the following axioms:
		\begin{enumerate}
			\item the edge sets $E_i$ partition $E$ --- in other words, they are mutually disjoint and their union equals $E$; moreover,
			\item the quiver $Q_1$ is either a single vertex or a cycle, while $Q_i$ for each $i > 1$ is a (possibly cyclic) path in $Q$; and finally,
			\item for each $i > 1$, the intersection of $V_i$ with the union $\bigcup_{j < i}V_j$ equals $\set{s(Q_i),t(Q_i)}$; this intersection has cardinality $1$ if $Q_i$ is a cycle and cardinality $2$ otherwise.
		\end{enumerate}
	\end{definition}
	
	Ear decompositions play an important role in the study of strongly connected quivers due to the following fundamental result.
	
	\begin{theorem}\label{thm:allears}  A quiver with at least two vertices is strongly connected if and only if it has an ear decomposition.
	\end{theorem}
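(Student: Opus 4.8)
The plan is to prove the two implications of Theorem~\ref{thm:allears} separately: the ``if'' direction by a short induction on the number of ears, and the ``only if'' direction by greedily peeling off one ear at a time --- the latter being where the real work lies. Both are classical facts about digraphs, so the point is to check that the argument survives the conventions (loops, the strict definition of ``path'') adopted in this paper.

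\textbf{Ear decomposition implies strong connectivity.} Suppose $Q$ carries an ear decomposition $Q_1,\dots,Q_c$ and write $P_i := Q_1\cup\cdots\cup Q_i$. I would prove by induction on $i$ that each $P_i$ is either a single vertex or strongly connected. The base case is axiom~(2) of Definition~\ref{def:eardecomp}. For the step, if $P_{i-1}$ is a single vertex then $\{s(Q_i),t(Q_i)\}\subseteq V(Q_i)\cap V(P_{i-1})$ has at most one element, so $s(Q_i)=t(Q_i)$ and $Q_i$ --- hence $P_i$ --- is a cycle. If $P_{i-1}$ is strongly connected, then each interior vertex $w$ of $Q_i$ is reachable from $s(Q_i)$ and reaches $t(Q_i)$ along $Q_i$, while $s(Q_i),t(Q_i)\in V(P_{i-1})$ by axiom~(3); splicing these segments of $Q_i$ with directed paths inside $P_{i-1}$ (a routine case check on the relative positions of the two vertices along $Q_i$, discarding any repeated edges) produces a directed path between every ordered pair of vertices of $P_i$. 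Since $Q=P_c$ has at least two vertices, it is not a single vertex, and is therefore strongly connected. (Implicit here --- and part of what it means for $Q_\bullet$ to be an ear decomposition \emph{of $Q$} --- is that $\bigcup_i V_i=V$.)

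\textbf{Strong connectivity implies an ear decomposition.} Now $Q$ is strongly connected with $|V|\ge 2$, so it contains a cycle: picking distinct $u,v$, a path $u\to\cdots\to v$ followed by a path $v\to\cdots\to u$ is a closed walk of positive length, from which a cycle can be extracted; let $Q_1$ be such a cycle. Inductively, suppose ears $Q_1,\dots,Q_i$ have been built so that $P_i := Q_1\cup\cdots\cup Q_i$ satisfies axioms~(1)--(3) of Definition~\ref{def:eardecomp} and $E(P_i)\subsetneq E$. If $V(P_i)=V$, pick any $e\in E\setminus E(P_i)$; both endpoints lie in $V(P_i)$, so $Q_{i+1}:=(e)$ is a valid new ear (a single edge, or a loop if $s(e)=t(e)$) with $V(Q_{i+1})\cap V(P_i)=\{s(e),t(e)\}$. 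If $V(P_i)\ne V$, fix $a\in V(P_i)$ and $b\notin V(P_i)$; on a path from $a$ to $b$ let $u_1$ be the first vertex outside $V(P_i)$ and $u_0$ its predecessor, and then continue by a shortest path $u_1\to\cdots\to u_k$ with $u_k\in V(P_i)$ and $u_1,\dots,u_{k-1}\notin V(P_i)$ (which exists since $u_1$ reaches $a$). Take $Q_{i+1} := (u_0\to u_1\to\cdots\to u_k)$; minimality makes it a genuine path, the only possible vertex coincidence being $u_0=u_k$ (in which case $Q_{i+1}$ is a cycle), and in both subcases axiom~(3) holds. In all cases every edge of $Q_{i+1}$ lies outside $E(P_i)$ --- by choice in the first case, and in the second because each such edge is incident to a vertex outside $V(P_i)$ --- so the $E_j$ remain pairwise disjoint. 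Since each step adds at least one edge, the process halts with $E(P_c)=E$ for some $c$; as $Q$ is strong with $|V|\ge 2$ it has no isolated vertex, so every vertex lies on some edge and hence in some $V_j$, making $\{Q_1,\dots,Q_c\}$ an ear decomposition of $Q$.

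\textbf{The main obstacle} is the construction of the new ear $Q_{i+1}$ in the second implication. One must verify simultaneously that it reuses no edge assigned to an earlier ear (which rests on the observation that any edge meeting a vertex outside $V(P_i)$ cannot belong to $P_i$), that it is a bona fide path or cycle in the strict sense of Section~\ref{sec:qreps} --- distinct edges with disjoint sources --- and that the intersection condition in axiom~(3) of Definition~\ref{def:eardecomp} is met, with the ``path'' versus ``cycle/loop'' dichotomy turning precisely on whether the two endpoints of the new ear coincide. Checking that the procedure terminates having consumed all of $E$ (and hence, since there are no isolated vertices, all of $V$) is then routine bookkeeping.
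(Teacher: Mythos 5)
Your proof is correct, and it is worth noting that the paper itself does not prove Theorem~\ref{thm:allears}: it cites \cite[Theorem 5.3.2]{bangjensen2009digraphs} and states that the standard proof is algorithmic. Your argument supplies the missing details and follows that same standard greedy route --- peel off ears one at a time, taking the first departure from and first re-entry into the already-built subquiver as the new ear --- which is essentially the construction underlying the cited algorithm, so the two approaches coincide in spirit. The places you flag as needing care are exactly the right ones: (i) concatenated walks in the ``if'' direction must be shortcut to genuine paths under the paper's strict convention (distinct edges \emph{and} pairwise distinct sources), which works because deleting the segment between two equal sources always shortens the walk; (ii) in the ``only if'' direction, the new ear has all interior vertices outside $V(P_i)$, which simultaneously guarantees distinct sources, that no edge already assigned to an earlier ear is reused, and that axiom~(3) of Definition~\ref{def:eardecomp} holds with the path/cycle dichotomy governed by whether $u_0=u_k$; and (iii) the degenerate cases --- $Q_1$ a single vertex, loops, and the phase where $V(P_i)=V$ but edges remain --- are all handled explicitly. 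One small point you could make even more explicit: in the ``if'' direction you should say ``single vertex \emph{with no edges}'' in the inductive dichotomy, since under the paper's definition an isolated vertex is not strongly connected (there is no path of length zero), though a vertex with a loop is; your final sentence discharging the $|V|\ge 2$ hypothesis already makes this harmless.
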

	
	 \noindent At least one standard proof of this result is given in the form of an efficient algorithm for constructing ear decompositions --- see \cite[Theorem  5.3.2]{bangjensen2009digraphs} for details. The figure below illustrates a strongly connected subquiver of the quiver depicted in the Introduction along with its decomposition into three ears:
	
	\begin{center}
	   \includegraphics[scale=.45]{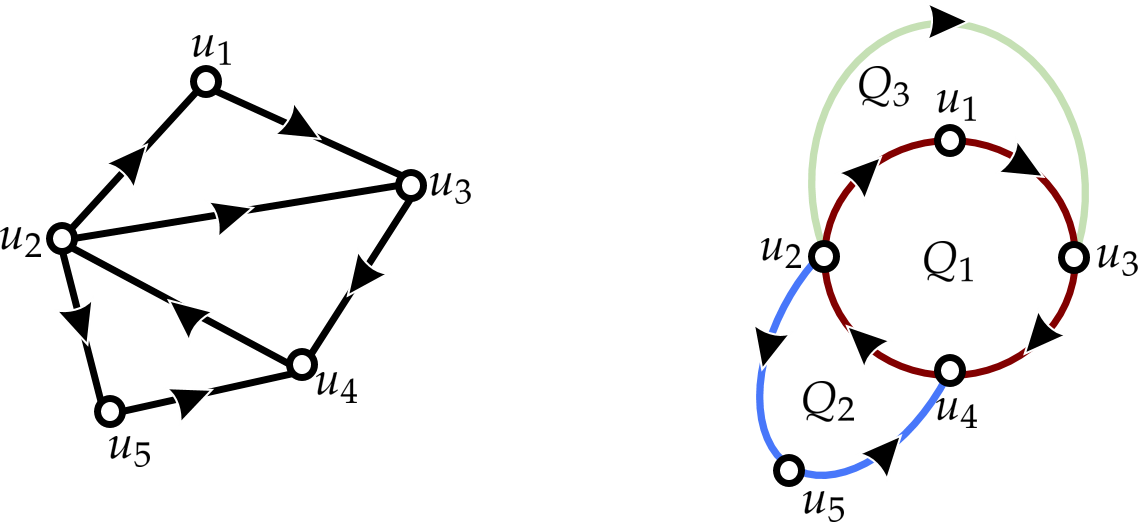}
	\end{center}
	
    We assume for the remainder of this section that $Q$ is strongly connected and fix an ear decomposition $Q_\bullet$ as in Definition \ref{def:eardecomp}. 
    The {\em depth} of an edge $e \in E$, denoted $|e|$, is the unique $i \in [c]$ with $e \in E_i$. We say that a path $p = (e_1,\ldots,e_k)$ is $Q_\bullet$-increasing if the $|e_i|$ form a weakly increasing sequence, and define $Q_\bullet$-decreasing paths analogously. For each vertex $v \in V$, we write $\ell(v)$ for the smallest $i$ in $[c]$ such that $v \in V_i$.
	
	\begin{proposition}\label{prop:monotonepaths}
		Let $\rho$ be any vertex in $V_1$. For any vertex $v \neq \rho$ in $V$, there exists
		\begin{enumerate}
			\item a unique $Q_\bullet$-increasing path $p[v]$ from $\rho$ to $v$ with all edges of depth $\leq \ell(v)$, and
			\item a unique $Q_\bullet$-decreasing path $q[v]$ from $v$ to $\rho$ with all edges of depth $\leq \ell(v)$. 
		\end{enumerate}
	\end{proposition}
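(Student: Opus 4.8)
The plan is to prove statements (1) and (2) together by induction on $\ell(v)$. First, statement (2) for $(Q,Q_\bullet)$ coincides with statement (1) for the opposite quiver $Q^{\mathrm{op}}$ — obtained by reversing every arrow and equipped with the ear decomposition whose ears are the $Q_i$ with their arrows reversed — since this again obeys Definition~\ref{def:eardecomp}, has the same vertex sets and edge depths, and swaps increasing paths for decreasing ones; so it is enough to treat (1). For the base case $\ell(v)=1$ we have $v\in V_1$, and because $v\neq\rho$ the first ear is not a single vertex, hence is a directed cycle whose edge set is exactly $E_1$. Traversing $Q_1$ forward from $\rho$ to the first arrival at $v$ yields a path all of whose edges have depth $1$, and it is the only $Q_\bullet$-increasing $\rho\to v$ path of depth $\le 1$, since any competitor uses only edges of $E_1$ and a directed cycle contains a unique directed path between two distinct vertices.

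For the inductive step, assume $\ell(v)=i>1$ and that the proposition holds at every vertex of smaller $\ell$-value. By axiom~(3) of Definition~\ref{def:eardecomp} the endpoints $s(Q_i)$ and $t(Q_i)$ already lie in $\bigcup_{j<i}V_j$, so they have $\ell$-value $<i$, while every other vertex of $Q_i$ has $\ell$-value exactly $i$; hence $v$ is an interior vertex of $Q_i$. Write $Q_i$ as the directed path $w_0\to w_1\to\cdots\to w_m$ with $w_0=s(Q_i)$ and $w_m=t(Q_i)$, so that $v=w_a$ for a unique $a\in\{1,\ldots,m-1\}$, and let $\sigma$ be the sub-path $w_0\to\cdots\to w_a$ and $\tau$ the sub-path $w_a\to\cdots\to w_m$, each edge of which has depth $i$. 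The inductive hypothesis provides a $Q_\bullet$-increasing path $p[w_0]$ from $\rho$ to $w_0$ with all edges of depth $\le\ell(w_0)<i$ (the empty path when $w_0=\rho$) and a $Q_\bullet$-decreasing path $q[w_m]$ from $w_m$ to $\rho$ with all edges of depth $\le\ell(w_m)<i$. Setting $p[v]:=p[w_0]\cdot\sigma$ and $q[v]:=\tau\cdot q[w_m]$ proves existence: these are genuine paths, the concatenated edge sets being disjoint by depth and the source vertices being disjoint because the interior vertices of $Q_i$ lie in no earlier ear; and their depth sequences are monotone of the correct type, bounded by $\ell(v)$.

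It remains to prove uniqueness of $p[v]$. Let $p=(e_1,\ldots,e_k)$ be any $Q_\bullet$-increasing $\rho\to v$ path with every $|e_t|\le i$. From $v=t(e_k)\in V_{|e_k|}$ and $\ell(v)=i$ we get $|e_k|=i$, so monotonicity makes the depth-$i$ edges of $p$ a nonempty terminal block lying in $E_i$, that is, a forward sub-path of $Q_i$ ending at $v=w_a$. Its initial vertex is $t(e_s)$ for the last preceding edge $e_s$ (or $\rho$, if the block is all of $p$), hence lies in $\bigcup_{j<i}V_j$; being also a vertex of $Q_i$, axiom~(3) forces it to be $w_0$ or $w_m$, and as the block ends at the interior vertex $w_a$ it must be $w_0$. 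Therefore the depth-$i$ tail of $p$ is exactly $\sigma$, and the remaining prefix $(e_1,\ldots,e_s)$ is a $Q_\bullet$-increasing path from $\rho$ to $w_0$. The final step is to verify that this prefix in fact satisfies the stronger bound ``all edges of depth $\le\ell(w_0)$'', so that the inductive hypothesis identifies it with $p[w_0]$ and hence $p=p[v]$; I would do this by repeating the terminal-block analysis one depth at a time, using the distinct-source requirement on $p$ to rule out a prefix that revisits an already-completed ear before reaching $w_0$.

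The step I expect to be the genuine obstacle is precisely this last one: showing that a competitor path, once its forced depth-$\ell(v)$ tail has been stripped off, leaves a prefix whose depths are controlled tightly enough (by $\ell(w_0)$, not merely by $\ell(v)$) for the inductive hypothesis to apply. Everything else is routine bookkeeping with the axioms of Definition~\ref{def:eardecomp} and the definition of a path.
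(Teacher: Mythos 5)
Your existence argument is correct and matches the paper's: induct on $\ell(v)$, and build $p[v]$ by concatenating $p[s(Q_i)]$ with the segment of $Q_i$ from $s(Q_i)$ to $v$. The reduction of statement (2) to statement (1) via the opposite quiver is a clean observation that goes a bit beyond the paper, which simply handles the two assertions in parallel.

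Your suspicion about the uniqueness step is well placed, and in fact the gap you flagged cannot be closed: the uniqueness claim of the proposition is false as literally stated. Take $V=\{a,b,c,d\}$ with ears
\[
Q_1 = (a\to b,\; b\to a), \qquad
Q_2 = (a\to c,\; c\to b), \qquad
Q_3 = (b\to d,\; d\to c),
\]
root $\rho = a$. One checks the three axioms of Definition~\ref{def:eardecomp}: the edge sets partition $E$; $Q_1$ is a $2$-cycle; $Q_2,Q_3$ are paths; $V_2\cap V_1 = \{a,b\} = \{s(Q_2),t(Q_2)\}$ and $V_3\cap(V_1\cup V_2) = \{b,c\} = \{s(Q_3),t(Q_3)\}$, each of cardinality $2$. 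For $v = d$ we have $\ell(d)=3$, and both
\[
(a\to b,\; b\to d)\quad\text{with depths }(1,3)
\qquad\text{and}\qquad
(a\to c,\; c\to b,\; b\to d)\quad\text{with depths }(2,2,3)
\]
are $Q_\bullet$-increasing paths from $\rho$ to $d$ with all edges of depth $\le 3$ and with pairwise distinct edge sources. The second one runs all the way around $Q_2$ (including its terminal edge $c\to b$) before entering $Q_3$, so its prefix to $b$ has depths $(2,2)$ even though $\ell(b)=1$ --- exactly the scenario your ``terminal-block stripping'' cannot rule out, because it is not ruled out.

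You should not be alarmed by this, however, because the paper's own proof establishes only existence, and that is all that is ever used downstream. What Definition~\ref{def:eararb}, Lemma~\ref{lem:scred}, and the maps $\Delta_\epsilon$ in~\eqref{eq:delta} actually rely on is that the \emph{constructed} path $p[v]$ --- the one produced by the inductive recipe, which by design avoids terminal edges and hence lives in the arborescence $T(Q_\bullet)$ --- is well defined, and that $T(Q_\bullet)$ is an arborescence. The latter follows from the in-degree count (every non-root vertex of $T$ has a unique incoming edge), not from global uniqueness among increasing paths. So the right reading of the proposition is ``there is a canonical such path, which we denote $p[v]$,'' and the word ``unique'' overstates what is proved or needed. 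Your existence proof is what matters; the uniqueness half should simply be dropped rather than repaired.
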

	\begin{proof}
		For $\ell(v) = 1$, the desired conclusion follows immediately because $Q_1$ must be a cycle by axiom (2) of Definition \ref{def:eardecomp}. Proceeding inductively, we assume that the assertion holds whenever $\ell(v) < i$, and consider any $v \in V_i$. Once again by axiom (2), our vertex $v$ lies on a path $Q_i$ from $s(Q_i)$ to $t(Q_i)$; and by axiom (3), the inductive hypothesis applies to both $s(Q_i)$ and $t(Q_i)$. The increasing path $p[v]$ is built by first going from $\rho$ to $s(Q_i)$ along $p[s(Q_i)]$ and then onward to $v$ along $Q_i$.  Similarly, the decreasing path $q[v]$ is built by concatenating the piece of $Q_i$ which goes from $v$ to $t(Q_i)$ with the path $q[t(Q_i)]$.
	\end{proof}
	
	For each $i$ in $[c]$, the set $E_i$ contains at most one edge $\epsilon_i \in E_i$ whose target is $t(Q_i)$; we allow for the possibility that $\epsilon_1$ does not exist if $Q_1$ has no edges, but all other $\epsilon_i$ exist and are uniquely determined by the ear decomposition.
	We call $\epsilon_i$ the {\em $i$-th terminal edge} with respect to the ear decomposition $Q_\bullet$, and denote the set of all terminal edges by $E_\text{ter} \subset E$. 
	
	\begin{definition} \label{def:eararb} 
		The {\bf arborescence induced by $Q_\bullet$} is the subquiver $T = T(Q_\bullet)$  with vertex set $V$ and edges $E - E_\text{ter}$.
	\end{definition}

	\noindent To confirm that $T$ is an arborescence, note that its root vertex is $\rho = s(Q_1)$, and that for any other vertex $v$ there is a unique path $p[v]$ from $\rho$ to $v$, whose existence is guaranteed by Proposition \ref{prop:monotonepaths}. In the ear decomposition drawn above,
	the three terminal edges (with respect to the root vertex $u_1$)
	are replaced by dotted arcs in the figure below. The arborescence induced by $Q_\bullet$ is obtained by removing these three edges:
	
	\begin{center}
	    \includegraphics[scale=.45]{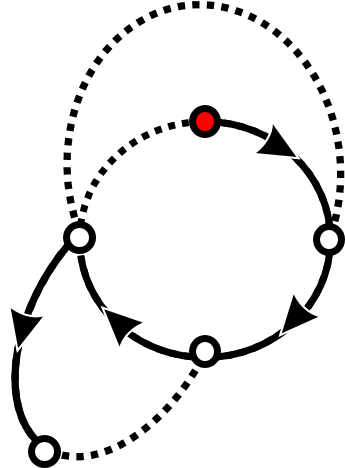}
	\end{center}

	Given a terminal edge $\epsilon \in E_\text{ter}$, consider the linear map $\Delta_\epsilon:\bA_\rho \to \bA_{t(\epsilon)}$ given by
	\begin{align}\label{eq:delta}
		\Delta_\epsilon = \bA_{p[t(\epsilon)]} - \bA_{\epsilon} \circ \bA_{p[s(\epsilon)]}.
	\end{align}
	The kernel of each such map is a subspace $\ker \Delta_\epsilon \subset \bA_\rho$. These kernels depend on the choice of ear decomposition $Q_\bullet$ and the representation $\bA_\bullet$. Let's denote their intersection by
	\begin{align}\label{eq:kerDelta}
		K(Q_\bullet;\bA_\bullet) := \bigcap_\epsilon ~ \ker \Delta_\epsilon,
	\end{align}
	where $\epsilon$ ranges over $E_\text{ter}$.
	This intersection of kernels is independent of the ear decomposition, since it is also the intersection of all $\ker(\bA_p - \bA_q)$, where $p$ and $q$ are any two paths from $\rho$ to the same vertex $v$. 
 We have the following result.
	
	\begin{lemma}\label{lem:scred}
		Let $Q = (s,t:E \to V)$ be a strongly connected quiver with ear decomposition $Q_\bullet$. For any representation $\bA_\bullet$ of $Q$, there is an isomorphism 
		\[
		\Gamma(Q;\bA_\bullet) \simeq K(Q_\bullet;\bA_\bullet)
		\] 
		between the space of sections of $\bA_\bullet$ over $Q$ and the intersection of the kernels from \eqref{eq:kerDelta}.
	\end{lemma}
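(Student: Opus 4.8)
The plan is to exhibit an explicit isomorphism between $\Gamma(Q;\bA_\bullet)$ and $K(Q_\bullet;\bA_\bullet)$ by using the arborescence $T = T(Q_\bullet)$ together with Proposition~\ref{prop:arbsec}. First I would note that $T$ is a spanning arborescence of $Q$ with root $\rho = s(Q_1)$, so $\Gamma(T;\bA_\bullet)$ is already understood: by Proposition~\ref{prop:arbsec} every section over $T$ is of the form $\gamma_v = \bA_{p[v]}(x)$ for a unique $x \in \bA_\rho$, where $p[v]$ is the unique path in $T$ from $\rho$ to $v$ supplied by Proposition~\ref{prop:monotonepaths}. This gives a linear isomorphism $\bA_\rho \xrightarrow{\ \sim\ } \Gamma(T;\bA_\bullet)$, call it $x \mapsto \gamma^x$. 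Since $T$ is a subquiver of $Q$ with the same vertices, $\Gamma(Q;\bA_\bullet)$ is precisely the subspace of $\Gamma(T;\bA_\bullet)$ consisting of those sections that \emph{also} satisfy the compatibility condition across the edges of $Q$ not in $T$, namely the terminal edges $\epsilon \in E_\text{ter}$.

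Next I would translate the extra constraints into the kernel conditions. For a terminal edge $\epsilon$ with source $a = s(\epsilon)$ and target $b = t(\epsilon)$, the compatibility requirement for $\gamma^x$ across $\epsilon$ reads $\gamma^x_b = \bA_\epsilon(\gamma^x_a)$, i.e.\ $\bA_{p[b]}(x) = \bA_\epsilon \circ \bA_{p[a]}(x)$, which is exactly $x \in \ker\big(\bA_{p[t(\epsilon)]} - \bA_\epsilon\circ\bA_{p[s(\epsilon)]}\big) = \ker\Delta_\epsilon$. Running over all $\epsilon \in E_\text{ter}$, the section $\gamma^x$ extends from $T$ to $Q$ if and only if $x \in \bigcap_{\epsilon}\ker\Delta_\epsilon = K(Q_\bullet;\bA_\bullet)$. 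Thus the isomorphism $x \mapsto \gamma^x$ restricts to the desired isomorphism $K(Q_\bullet;\bA_\bullet) \xrightarrow{\ \sim\ } \Gamma(Q;\bA_\bullet)$. It remains to check that this really does capture \emph{all} constraints: one must verify that if $\gamma^x$ satisfies compatibility across every edge of $T$ (automatic) and across every terminal edge, then it satisfies it across every edge of $Q$ — but $E = (E - E_\text{ter}) \sqcup E_\text{ter}$ partitions the edge set, so there is nothing further to check.

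The one genuinely delicate point — and the step I expect to be the main obstacle — is confirming that $T$ really is an arborescence with root $\rho$, so that Proposition~\ref{prop:arbsec} applies cleanly; this hinges on the uniqueness half of Proposition~\ref{prop:monotonepaths}(1), namely that after deleting the terminal edges there is a \emph{unique} directed path from $\rho$ to each vertex. I would argue this by the same induction on $\ell(v)$ used in Proposition~\ref{prop:monotonepaths}: the terminal edge $\epsilon_i$ is the unique edge of $E_i$ pointing into $t(Q_i)$, so removing it breaks the only cycle introduced at stage $i$ while leaving $V_i$ connected to the earlier stages through $s(Q_i)$ (and, when $Q_i$ is not a cycle, also through $t(Q_i)$ via the remaining portion of the ear). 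A short count using axiom (3) of Definition~\ref{def:eardecomp} shows $|E - E_\text{ter}| = |V| - 1$, which together with connectivity from $\rho$ forces the arborescence property. Finally I would remark on the parenthetical claim that $K(Q_\bullet;\bA_\bullet)$ is independent of the ear decomposition: this follows because $\ker\Delta_\epsilon = \ker(\bA_{p[t(\epsilon)]} - \bA_{q})$ where $q$ is the $T$-path to $t(\epsilon)$ composed with (or the same as) another path, and more invariantly $K(Q_\bullet;\bA_\bullet) = \bigcap \ker(\bA_p - \bA_{p'})$ over all pairs of directed paths $p, p'$ in $Q$ sharing both endpoints $\rho$ and $v$ — an equality one proves by showing each terminal-edge kernel is subsumed in the larger intersection, and conversely that any two parallel paths differ by a composition of "terminal detours".
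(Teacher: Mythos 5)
Your proposal is correct and follows essentially the same route as the paper: identify $\Gamma(T;\bA_\bullet)$ with $\bA_\rho$ via Proposition~\ref{prop:arbsec}, then observe that compatibility across a terminal edge $\epsilon$ is exactly the condition $x \in \ker\Delta_\epsilon$, so intersecting over $E_\text{ter}$ gives $K(Q_\bullet;\bA_\bullet)$. The extra verification you flag (that $T$ is an arborescence) is already settled in the paper in the discussion following Definition~\ref{def:eararb} via Proposition~\ref{prop:monotonepaths}, so it is not an additional obligation of this lemma's proof.
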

	\begin{proof}
		Let $T$ be the arborescence induced by $Q_\bullet$ and $\rho$ its root vertex. Using Proposition \ref{prop:arbsec}, vectors in $\bA_\rho$ correspond bijectively with sections in $\Gamma(T;\bA_\bullet)$ via the assignment that sends each $x$ in $\bA_\rho$ to the section given by
		\[
		v \mapsto \gamma_v = \bA_{p[v]}(x).
		\] The subspace $\Gamma(Q;\bA_\bullet) \subset \Gamma(T;\bA_\bullet)$, is obtained by additionally enforcing compatibility across the edges in $E_\text{ter}$. Let $\epsilon$ be a terminal edge and $x_\rho$ a vector in $\bA_\rho$. Now the section $v \mapsto \bA_{p[v]}(x_\rho)$ of $\Gamma(T;\bA_\bullet)$ satisfies the compatibility requirement $\bA_\epsilon(x_{s(\epsilon)}) = x_{t(\epsilon)}$ across $\epsilon$ if and only if $x_\rho$ lies in the kernel of the map $\Delta_\epsilon$ from \eqref{eq:delta}. Thus, our $x_\rho$-induced section is compatible across all the terminal edges if and only if $x_\rho$ lies in $K(Q_\bullet;\bA_\bullet)$.
	\end{proof}
	
	We may safely combine this result with Proposition \ref{prop:arbsec} to reduce a strongly connected quiver to an arborescence while preserving the space of sections.
	
	\begin{corollary} \label{cor:scarb}
		Assuming the hypotheses of Lemma \ref{lem:scred}, let $T$ be the arborescence induced by $Q_\bullet$ and $\rho$ its root vertex. Let $\bA'_\bullet$ be the representation of $T$ prescribed by the following assignments to vertices $v \in V$ and non-terminal edges $e \in E - E_\text{\rm ter}$:
		\[
		\bA'_v := \begin{cases}
		\bA_v & v \neq \rho ,\\
		K(Q_\bullet;\bA_\bullet) & v = \rho ;
		\end{cases} \quad \text{ and } \quad
		\bA'_e := \begin{cases}
		\bA_e & s(e) \neq \rho,\\
		\bA_e\big|_{K(Q_\bullet;\bA_\bullet)} & s(e)=\rho.
		\end{cases}		 					
		\] 
		Then, there is an isomorphism of sections
		\[
		\Gamma(Q;\bA_\bullet) \simeq \Gamma(T;\bA'_\bullet).
		\]
	\end{corollary}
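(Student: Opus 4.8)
The plan is to deduce this from the two results just established --- Lemma \ref{lem:scred}, which gives $\Gamma(Q;\bA_\bullet) \simeq K(Q_\bullet;\bA_\bullet)$, and Proposition \ref{prop:arbsec}, which identifies the sections of a representation of an arborescence with the vector space sitting over its root. Since $T$ is an arborescence with root $\rho$ (as recorded just after Definition \ref{def:eararb}), it suffices to check that $\bA'_\bullet$ is a genuine representation of $T$ and then to observe, via Proposition \ref{prop:arbsec}, that $\Gamma(T;\bA'_\bullet) \simeq \bA'_\rho = K(Q_\bullet;\bA_\bullet)$; composing the two isomorphisms yields the claim.

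The first thing I would verify is that the assignments defining $\bA'_\bullet$ are consistent, i.e., that each $\bA'_e$ maps $\bA'_{s(e)}$ into $\bA'_{t(e)}$. When $s(e) \neq \rho$ this is immediate, as $\bA'_e = \bA_e$ and the endpoint spaces are unchanged. When $s(e) = \rho$, note that $t(e) \neq \rho$, because the root of the arborescence $T$ has no incoming edges; hence $\bA'_{t(e)} = \bA_{t(e)}$, and $\bA'_e = \bA_e\big|_{K(Q_\bullet;\bA_\bullet)}$ indeed sends $\bA'_\rho = K(Q_\bullet;\bA_\bullet) \subset \bA_\rho$ into $\bA_{t(e)}$. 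With $\bA'_\bullet$ in hand, Proposition \ref{prop:arbsec} says that every section of $\bA'_\bullet$ over $T$ is determined by its root component $x \in \bA'_\rho = K(Q_\bullet;\bA_\bullet)$ through $\gamma_v = \bA'_{p[v]}(x)$, where $p[v]$ is the unique path in $T$ from $\rho$ to $v$. This unique path is precisely the $Q_\bullet$-increasing path of Proposition \ref{prop:monotonepaths}, and since $\bA'_{p[v]}$ agrees with $\bA_{p[v]}$ on $K(Q_\bullet;\bA_\bullet)$ --- the only edit being the restriction of the initial edge-map to this subspace --- we obtain $\Gamma(T;\bA'_\bullet) \simeq K(Q_\bullet;\bA_\bullet) \simeq \Gamma(Q;\bA_\bullet)$, the last isomorphism being Lemma \ref{lem:scred}. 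In fact, chasing both isomorphisms shows that under the natural inclusion $\tot(\bA'_\bullet) \hookrightarrow \tot(\bA_\bullet)$ the subspaces $\Gamma(T;\bA'_\bullet)$ and $\Gamma(Q;\bA_\bullet)$ literally coincide: by the proof of Lemma \ref{lem:scred} each is the set of tuples $\big(\bA_{p[v]}(x)\big)_{v \in V}$ with $x$ ranging over $K(Q_\bullet;\bA_\bullet)$.

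There is no serious obstacle here --- the argument is bookkeeping --- but two points warrant care. The first is the well-definedness of $\bA'_\bullet$, which hinges on the root of $T$ having no incoming edge, so that restricting the edge-maps out of $\rho$ does not clash with their target spaces; this is the only place the specific structure of the induced arborescence enters. The second is the bookkeeping of paths: one must confirm that the path $p[v]$ appearing in Proposition \ref{prop:arbsec} (the unique path in $T$) coincides with the $Q_\bullet$-increasing path from Proposition \ref{prop:monotonepaths} used to define the maps $\Delta_\epsilon$, so that $K(Q_\bullet;\bA_\bullet)$, the sections of $(T,\bA'_\bullet)$, and the sections of $(Q,\bA_\bullet)$ are all described relative to the same family of paths and hence visibly match up.
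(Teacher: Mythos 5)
Your proposal is correct and mirrors the paper's intended argument exactly: the paper itself gives no separate proof of this corollary, merely observing before the statement that it follows by combining Lemma \ref{lem:scred} with Proposition \ref{prop:arbsec}. You carry this out carefully, with the additional (worthwhile) checks that $\bA'_\bullet$ really is a representation of $T$ --- hinging on the root $\rho$ having no incoming edge in the induced arborescence --- and that the path $p[v]$ of Proposition \ref{prop:arbsec} is the same $Q_\bullet$-increasing path that underlies the maps $\Delta_\epsilon$ defining $K(Q_\bullet;\bA_\bullet)$.
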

	
	We will use Corollary \ref{cor:scarb} to perform section-preserving simplifications of arbitrary (i.e., not necessarily strongly connected) quivers.
	
	\section{The Acyclic Reduction} 
	\label{sec:acyclic} 
	
	Fix a quiver $Q = (s,t:E \to V)$. A strongly connected subquiver $R \subset Q$ is {\em maximal} if it is not contained in a strictly larger strongly connected subquiver of $Q$. We denote the set of all maximal strongly connected subquivers of $Q$ by $\MSC(Q)$. This set can be extracted from $Q$ very efficiently by employing the remarkable algorithm of Tarjan \cite[Section 5.2]{bangjensen2009digraphs}. Distinct subquivers  in $\MSC(Q)$ have disjoint vertices\footnote{If $R \neq R'$ in $\MSC(Q)$ share a vertex $v$, then there is a path (passing through $v$) in the union $R \cup R'$ from any vertex of $R$ to any vertex of $R'$ and vice versa. The existence of such paths makes $R \cup R'$ strongly connected, contradicting the maximality of either $R$ or $R'$.}. For each $R$ in $\MSC(Q)$, fix an ear decomposition $R_\bullet$ as in Definition \ref{def:eardecomp}. We write $T(R_\bullet)$ for the arborescence induced by $R_\bullet$ as in Definition~\ref{def:eararb}, and let $E_\text{ter}(R_\bullet) \subset E$ be the set of terminal edges of~$R_\bullet$.
	
	\begin{definition}\label{def:acycred}
		The {\bf acyclic reduction} $Q^*$ of $Q =(s,t:E \to V)$ with respect to the ear decompositions $\set{R_\bullet \mid R \in \MSC(Q)}$ is the subquiver $Q^* \subset Q$ defined as follows: it has the same vertex set $V$, while its edge set $E^* \subset E$ is given by removing all terminal edges, i.e.,
		\[
		E^* = E - \bigcup_{R} E_\text{ter}(R_\bullet),
		\]
		where $R$ ranges over $\MSC(Q)$.
	\end{definition}
	
	We note that the quiver $Q^*$ is indeed acyclic (as suggested by its name) as follows. Each cycle in $Q$ is strongly connected, hence lies in a single maximal strongly connected component $R \in \MSC(Q)$. But the removal of all the terminal edges $E_\text{ter}(R_\bullet)$ turns $R$ into the arborescence $T(R_\bullet)$, which cannot contain any cycles. Depicted below is the quiver from the Introduction; the light-shaded edges lie within strongly-connected subquivers, whose root vertices are coloured red. The dotted edges are terminal for the associated ear decompositions, and their removal produces the acyclic reduction:
	
	\begin{center}
	    \includegraphics[scale=.6]{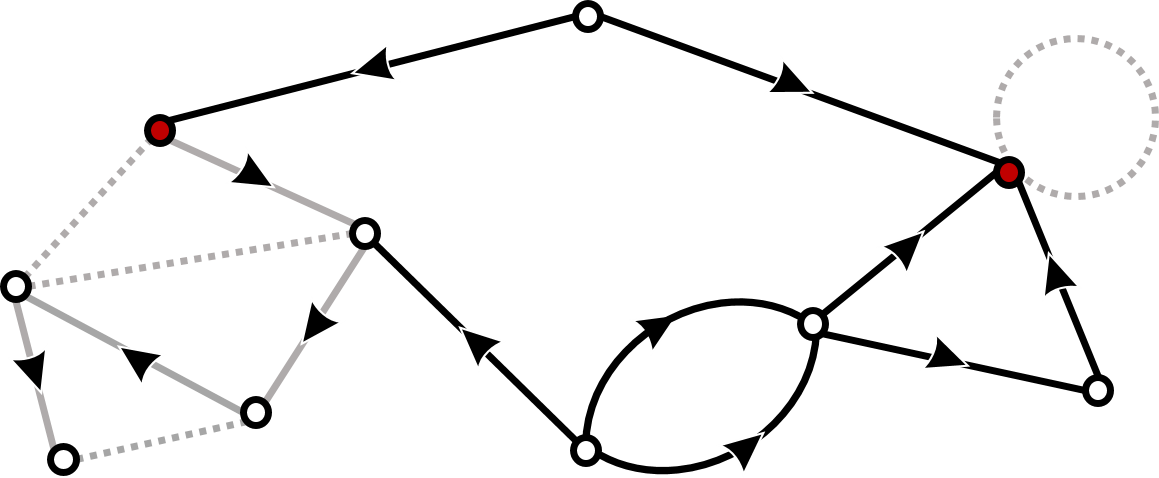}
	\end{center}

	Our next goal is to reduce a given representation $\bA_\bullet$ of $Q$ to a new representation $\bA_\bullet^*$ of $Q^*$ in a manner that preserves the space of sections. Let $\rho: \MSC(Q) \to V$ be the injective {\em root map}, which sends each maximal strongly connected subquiver $R \subset Q$ to the root vertex of $T(R_\bullet)$. We associate to each vertex $v \in V$ the subspace $\bA^\circ_v \subset \bA_v$ given by
	\[
	\bA^\circ_v := \begin{cases} 
	\bA^R_{v} & \text{if } v = \rho(R) \text{ for some } R \in \MSC(Q), \\
	\bA_v & \text{otherwise},			
	\end{cases}
	\]
	where, for each $R \in \MSC(Q)$, we write $\bA^R_\bullet$ for the representation of  $T(R_\bullet)$ described in Corollary \ref{cor:scarb}. 
	
	\begin{definition}\label{def:LambdavR}
		For each vertex $v \in V$ and strongly connected $R \in \MSC(Q)$, let $P^*_{v\to R}$ be the set of all paths in $Q^*$ with source $v$ and target $\rho(R)$. The {\bf $R$-constrained space at $v$} is the subspace $\Lambda_{v,R} \subset \bA^\circ_v$ given by
		\[
		\Lambda_{v,R} := \set{x \in \bA^\circ_v \mid \bA_p(x) \in \bA^R_{\rho(R)} \text{ for all } p \in P^*_{v\to R}},
		\] 
		with the implicit understanding that $\Lambda_{v,R}$ equals $\bA^\circ_v$ whenever $P^*_{v\to R}$ is empty. 
	\end{definition} 
	
	Our next result shows that $R$-constrained subspaces behave well under the linear maps assigned by $\bA_\bullet$ to edges of $Q^*$.
	
	\begin{proposition}\label{prop:lambdarest}
		For any edge $e$ in $E^*$ and subquiver $R \in \MSC(Q)$, the linear map $\bA_e:\bA_{s(e)} \to \bA_{t(e)}$ sends $\Lambda_{s(e),R}$ to $\Lambda_{t(e),R}$.
	\end{proposition}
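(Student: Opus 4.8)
The plan is to pick an arbitrary $x\in\Lambda_{s(e),R}$ and verify, directly from Definition~\ref{def:LambdavR}, the two conditions that place $\bA_e(x)$ in $\Lambda_{t(e),R}$: that $\bA_e(x)\in\bA^\circ_{t(e)}$, and that $\bA_q(\bA_e(x))\in\bA^R_{\rho(R)}$ for every path $q\in P^*_{t(e)\to R}$.

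For the second condition the key step is to prepend $e$ to $q$. One first checks that $(e,q_1,\dots,q_m)$ is a genuine path of $Q^*$, hence an element of $P^*_{s(e)\to R}$: its edges and sources are pairwise distinct because $Q^*$ is acyclic --- were $q$ to pass through $s(e)$, the initial segment of $q$ from $t(e)$ back to $s(e)$ followed by $e$ would be a cycle in $Q^*$, and this also precludes $e$ from occurring among the $q_i$. Then $\bA_{(e)\cdot q}=\bA_q\circ\bA_e$ by the definition of path maps, and $x\in\Lambda_{s(e),R}$ yields $\bA_q(\bA_e(x))=\bA_{(e)\cdot q}(x)\in\bA^R_{\rho(R)}$; the condition is vacuous when $P^*_{t(e)\to R}$ is empty.

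For the first condition I would split on the vertex $t(e)$. If $t(e)$ is not the root of any maximal strongly connected subquiver, then $\bA^\circ_{t(e)}=\bA_{t(e)}$ and there is nothing to show. If $t(e)=\rho(R')$ for some $R'\in\MSC(Q)$, then the one-edge sequence $(e)$ is a path of $Q^*$ from $s(e)$ to $\rho(R')$, so it lies in $P^*_{s(e)\to R'}$, and the constraint defining the constrained subspace at $s(e)$, applied to this path, forces $\bA_e(x)=\bA_{(e)}(x)\in\bA^{R'}_{\rho(R')}=\bA^\circ_{t(e)}$.

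I expect the real difficulty to sit in this last case when $R'\neq R$: the hypothesis on $x$ controls it only along paths ending at $\rho(R)$, and says nothing a priori about $\bA_e(x)$ landing in the cycle-fixed subspace $\bA^{R'}_{\rho(R')}=K(R'_\bullet;\bA_\bullet)$. The way through is to use the single-edge path $(e)$ as a witness --- this is exactly where treating the constrained subspaces as enforcing compatibility with \emph{every} maximal strongly connected component (as they are ultimately combined into $\bA^*_\bullet$) makes the required membership available. Once both conditions hold, $\bA_e(x)\in\Lambda_{t(e),R}$, as desired.
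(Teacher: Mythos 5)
Your core step is identical to the paper's: prepend $e$ to a path $q\in P^*_{t(e)\to R}$ to obtain an element of $P^*_{s(e)\to R}$, then unwind $\bA_{(e)\cdot q}=\bA_q\circ\bA_e$. The paper states the path concatenation without verifying that $(e,q_1,\dots,q_m)$ has distinct edges and pairwise disjoint sources; your acyclicity argument closes that small gap cleanly.

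Where you go further is the first condition, and your instincts are exactly right. The paper's proof addresses \emph{only} the path constraints $\bA_q(\bA_e(x))\in\bA^R_{\rho(R)}$ and says nothing about $\bA_e(x)$ lying in $\bA^\circ_{t(e)}$, which Definition~\ref{def:LambdavR} requires as a precondition for membership in $\Lambda_{t(e),R}$. When $t(e)=\rho(R)$, the single-edge path $(e)\in P^*_{s(e)\to R}$ takes care of it. But when $t(e)=\rho(R')$ for some $R'\neq R$ in $\MSC(Q)$, the hypothesis $x\in\Lambda_{s(e),R}$ genuinely does not control the image under $\bA_e$: one can build examples where $\bA_e(x)\notin\bA^{R'}_{\rho(R')}$ for $x\in\Lambda_{s(e),R}$. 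So the proposition as literally stated appears to have too weak a hypothesis, and your worry in the last paragraph is justified rather than a matter of cosmetics. The fix you gesture at --- using $x\in\Lambda_{s(e),R'}$ via the single-edge path, which is available once one works in the intersection $\Lambda_{s(e)}=\bigcap_{R''}\Lambda_{s(e),R''}$ --- is precisely what the paper needs and is precisely the form in which the result is applied immediately afterward (to conclude $\bA_e$ sends $\Lambda_{s(e)}$ into $\Lambda_{t(e)}$). A clean repair is to either strengthen the hypothesis to $x\in\Lambda_{s(e)}$, or weaken the conclusion to the path-constraint portion only, leaving membership in $\bA^\circ_{t(e)}$ to be established after intersecting over all of $\MSC(Q)$. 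Your proof proposal does more work than the paper's, but that extra work is warranted.
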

	\begin{proof}
		Let $p = (e_1,\ldots,e_k)$ be any path in $P^*_{t(e) \to R}$ and note that the augmented path $p' = (e,e_1,\ldots,e_k)$ is an element of $P^*_{s(e) \to R}$. Now for any $x$ in $\Lambda_{s(e),R}$ we know that $\bA_{p'}(x)$ lies in $\bA^R_{\rho(R)}$ by Definition \ref{def:LambdavR}. But $\bA_{p'}(x)$ is $\bA_p \circ \bA_e(x)$, whence $\bA_e(x)$ lies in $\Lambda_{t(e),R}$.
	\end{proof}
	
	\noindent Consider the intersection of all the $R$-constrained spaces at a given vertex $v \in V$, i.e, define the subspace $\Lambda_v \subset \bA_v$ as
	\begin{align}\label{eq:Lambdav}
		\Lambda_v := \bigcap_{R} \Lambda_{v,R}
	\end{align}
	where $R$ ranges over $\MSC(Q)$. It follows immediately from Proposition \ref{prop:lambdarest} that for each edge $e$ in $E^*$ the map $\bA_e$ sends $\Lambda_{s(e)}$ to $\Lambda_{t(e)}$.  
	
	\begin{definition}\label{def:acycrep}
		Let $Q^*$ be the acyclic reduction of $Q$ with respect to a choice of ear decompositions $\set{R_\bullet \mid R \in \MSC(Q)}$. The {\bf acyclification} of a representation $\bA_\bullet$ of $Q$ is a new representation $\bA_\bullet^*$ of $Q^*$ which assigns to every vertex $v$ in $V$ the vector space
		\[
		\bA^*_v = \Lambda_v
		\]
		and to every edge $e$ in $E^*$ the restriction of $\bA_e$ to $\Lambda_{s(e)}$, denoted $\bA^*_e:\Lambda_{s(e)} \to \Lambda_{t(e)}$.
	\end{definition}
	
	As promised, our new representation $\bA^*_\bullet$ retains full knowledge of the sections of the original representation $\bA_\bullet$ even though it is only defined on the acyclic reduction $Q^*$.
	
	\begin{proposition}\label{prop:acycred}
		Let $\bA_\bullet$ be a representation of a quiver $Q = (s,t:E \to V)$. Writing $Q^*$ for the acyclic reduction of $Q$ with respect to some choice of ear decompositions $\set{R_\bullet \mid R \in \MSC(Q)}$ and $\bA_\bullet^*$ for the corresponding acyclification of $\bA_\bullet$, there is an isomorphism of sections
		\[
		\Gamma(Q;\bA_\bullet) \simeq \Gamma(Q^*;\bA^*_\bullet).
		\]	
	\end{proposition}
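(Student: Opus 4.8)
The plan is to prove the stronger assertion that $\Gamma(Q;\bA_\bullet)$ and $\Gamma(Q^*;\bA^*_\bullet)$ are literally \emph{equal} as subspaces of the common total space $\tot(\bA_\bullet)$; the claimed isomorphism is then the one induced by the inclusion $\tot(\bA^*_\bullet)=\prod_v\Lambda_v \hookrightarrow \prod_v \bA_v = \tot(\bA_\bullet)$. This is meaningful because $Q$ and $Q^*$ have the same vertex set, and because $E^*\subseteq E$, so the compatibility constraints cut out by $Q^*$ form a subset of those cut out by $Q$. I will check the two resulting inclusions in turn, using throughout that distinct members of $\MSC(Q)$ have disjoint vertex sets; the consequence I will repeatedly exploit is that the edges of a given $R\in\MSC(Q)$ that survive into $Q^*$ are exactly its non-terminal edges, so that each induced arborescence $T(R_\bullet)$ is a subquiver of $Q^*$.

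For $\Gamma(Q;\bA_\bullet)\subseteq\Gamma(Q^*;\bA^*_\bullet)$: given a section $\gamma$ of $\bA_\bullet$ over $Q$, compatibility across the edges of $Q^*\subseteq Q$ is automatic, so the only point is that $\gamma_v\in\Lambda_v$ for every vertex $v$. Fix $R\in\MSC(Q)$; restricting $\gamma$ to $R$ gives a section over the strongly connected quiver $R$, so by Lemma~\ref{lem:scred} — whose proof identifies such a section with its value at the root — we get $\gamma_{\rho(R)}\in K(R_\bullet;\bA_\bullet|_R)=\bA^R_{\rho(R)}=\bA^\circ_{\rho(R)}$. Applying this to each member of $\MSC(Q)$ gives $\gamma_v\in\bA^\circ_v$ for all $v$ (trivially so at vertices that are not roots, where $\bA^\circ_v=\bA_v$). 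Then for any $p\in P^*_{v\to R}$, compatibility of $\gamma$ along the edges of $p$ (all lying in $Q^*$) yields $\bA_p(\gamma_v)=\gamma_{\rho(R)}\in\bA^R_{\rho(R)}$, i.e.\ $\gamma_v\in\Lambda_{v,R}$; intersecting over $R$ gives $\gamma_v\in\Lambda_v$.

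For the reverse inclusion $\Gamma(Q^*;\bA^*_\bullet)\subseteq\Gamma(Q;\bA_\bullet)$: given $\delta\in\Gamma(Q^*;\bA^*_\bullet)$, and since $E=E^*\sqcup\bigsqcup_R E_{\text{ter}}(R_\bullet)$, it suffices to verify compatibility of $\delta$ across each terminal edge $\epsilon\in E_{\text{ter}}(R_\bullet)$. The idea is to reconstruct a local section of $R$ from the single vector $\delta_{\rho(R)}$ and recognise it as $\delta$. Since $\delta_{\rho(R)}\in\Lambda_{\rho(R)}\subseteq\bA^\circ_{\rho(R)}=K(R_\bullet;\bA_\bullet|_R)$, Corollary~\ref{cor:scarb} (equivalently, the ``if'' direction of Lemma~\ref{lem:scred} together with Proposition~\ref{prop:arbsec}) guarantees that $v\mapsto\widetilde{\gamma}_v:=\bA_{p[v]}(\delta_{\rho(R)})$, with $p[v]$ the unique path in $T(R_\bullet)$ from $\rho(R)$ to $v$, is an honest section of $\bA_\bullet|_R$ over all of $R$ — hence compatible across $\epsilon$. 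Because $T(R_\bullet)\subseteq Q^*$, each $p[v]$ is a path in $Q^*$, so compatibility of $\delta$ over $Q^*$ forces $\delta_v=\bA_{p[v]}(\delta_{\rho(R)})=\widetilde{\gamma}_v$ for every vertex $v$ of $R$; thus $\bA_\epsilon(\delta_{s(\epsilon)})=\bA_\epsilon(\widetilde{\gamma}_{s(\epsilon)})=\widetilde{\gamma}_{t(\epsilon)}=\delta_{t(\epsilon)}$, as required.

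I expect the reverse inclusion to be the crux, and within it the identification of the reconstructed section $\widetilde{\gamma}$ with $\delta|_R$. Everything hinges on $T(R_\bullet)$ being a subquiver of $Q^*$, which lets the compatibility already enforced by $\delta$ over $Q^*$ propagate the root value $\delta_{\rho(R)}$ along $T(R_\bullet)$ in exactly the way Proposition~\ref{prop:arbsec} demands. The forward inclusion and the bookkeeping around $\bA^\circ$, the spaces $\Lambda_{v,R}$, and the disjointness of the strongly connected pieces should be routine.
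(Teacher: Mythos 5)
Your proof is correct and follows essentially the same route as the paper's: the forward inclusion uses Corollary~\ref{cor:scarb} to place $\gamma_{\rho(R)}$ in $\bA^R_{\rho(R)}$ and then propagates along paths in $P^*_{v\to R}$ to conclude $\gamma_v\in\Lambda_v$, and the reverse inclusion uses $\delta_{\rho(R)}\in\bA^R_{\rho(R)}$ together with Corollary~\ref{cor:scarb} to recover compatibility across terminal edges. The one place you go beyond the paper is in spelling out why the reconstructed section $\widetilde\gamma$ on $R$ must agree with $\delta$ — namely that $T(R_\bullet)\subseteq Q^*$, so compatibility of $\delta$ over $Q^*$ already forces $\delta_v=\bA_{p[v]}(\delta_{\rho(R)})$ — a step the paper leaves implicit in its citation of Corollary~\ref{cor:scarb}; making it explicit is a genuine improvement in readability but not a different argument.
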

	\begin{proof}
		First we show that a section $\gamma$ in $\Gamma(Q;\bA_\bullet)$ gives a section in $\Gamma(Q^*;\bA^*_\bullet)$. Since $E^* \subset E$ by Definition \ref{def:acycred}, it suffices to prove that $\gamma_v$ lies in the subspace $\Lambda_v$ of $\bA_v$ for all vertices $v$ in $V$. Since $\gamma$ restricts to a section in $\Gamma(R;\bA_\bullet)$ for every subquiver $R \in \MSC(Q)$, it follows from Corollary \ref{cor:scarb} that $\gamma_{\rho(R)}$ lies in the subspace $\bA^R_{\rho(R)}$ of $\bA_{\rho(R)}$. Thus, for any vertex $v \in V$ and every path $p$ in $P^*_{v\to R}$, compatibility forces $\bA_p(\gamma_v) \in \bA^R_{\rho(R)}$. Thus, $\gamma_v$ must lie in the subspace $\Lambda_v$ from \eqref{eq:Lambdav}. Now consider any edge $e \in E^*$ and note that $\bA^*_e$ is defined simply by restricting $\bA_e$ to the subspace $\Lambda_{s(e)}$. Thus, we obtain 
		\[
		\bA^*_e(\gamma_{s(e)}) = \bA_e(\gamma_{s(e)}) =  \gamma_{t(e)}
		\] for each such edge, and it follows that $\gamma$ is a section in $\Gamma(Q^*;\bA^*_\bullet)$. Conversely, consider a section $\gamma^*$ in $\Gamma(Q^*;\bA^*_\bullet)$. The $\bA_\bullet$-compatibility of $\gamma^*$ across every edge $e \in E^*$ follows from the fact that $\bA^*_e$ is the restriction of $\bA_e$; it therefore suffices to show that $\gamma^*$ is also $\bA_\bullet$-compatible across all the edges in $E - E^*$. By Definition \ref{def:acycred}, any such edge $\epsilon$ lies in $E_\text{ter}(R_\bullet)$ for a unique $R \in \MSC(Q)$. We know that $\Lambda_{\rho(R)}$ is a subspace of $\bA^R_{\rho(R)}$, 
		by \eqref{eq:Lambdav} combined with Definition \ref{def:LambdavR}. Thus, Corollary \ref{cor:scarb} guarantees that $\gamma^*$ is also $\bA_\bullet$-compatible across $\epsilon$, as desired. 
	\end{proof}
	
	\section{The Arboreal Replacement}
    \label{sec:arboreal}

	We assume here that $Q = (s,t:E \to V)$ is an acyclic quiver, so its vertex set $V$ is partially ordered by (the reflexive closure of) the binary relation
	\[
		u < v \text{ if and only if there is a path }p \text{ in } Q \text{ with } s(p) = u \text{ and } t(p) = v.
	\]
	Let $V_\text{min} \subset V$ be set of all minimal vertices with respect to this partial order --- thus, a vertex $v$ lies in $V_\text{min}$ if and only if there is no edge $e \in E$ with $t(e) = v$. We fix a representation $\bA_\bullet$ of $Q$, and seek to  compute the space of sections $\Gamma(Q;\bA_\bullet)$. For this purpose, it will be convenient to formally add a new vertex to $Q$ that serves as the global minimum for the partial order described above.
	
	\begin{definition}\label{def:augquiv}
	The {\bf augmented quiver} $Q^+$ has vertices $V^+ := V \cup \set{\rho}$, where $\rho$ is a new vertex. Its edge set $E^+$ is $E \cup \set{e_v \mid v \in V_\text{min}}$; the sources and targets of edges in $E$ are inherited from $Q$, while each new edge $e_v$ has source $\rho$ and target $v$ in $V_\text{min}$.
	\end{definition}
	
	Drawn below is the augmented quiver corresponding to the acyclic reduction from the previous section; the root and (two) new edges $e_v$ for the vertices $v \in V_\text{min}$ are highlighted in blue.
	
	\begin{center}
	    \includegraphics[scale=.6]{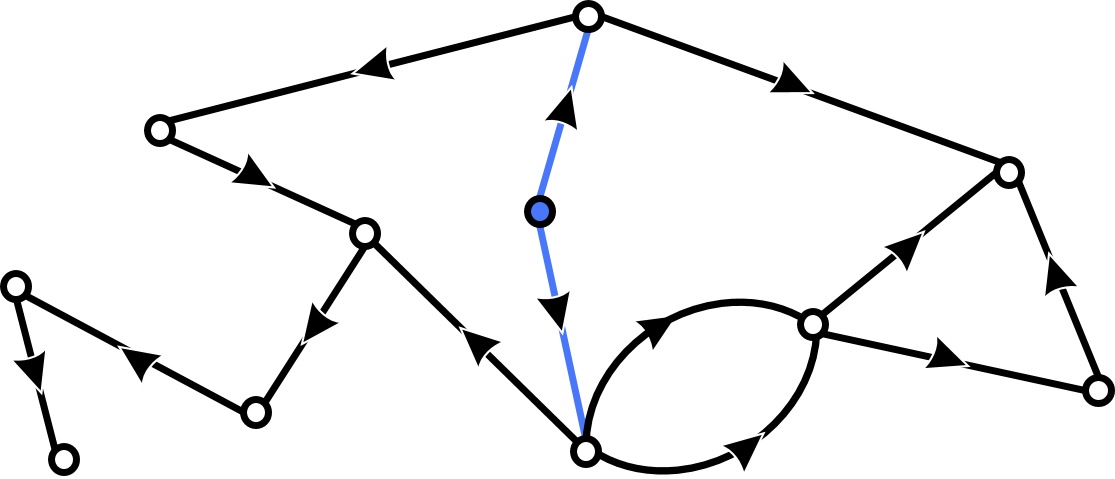}
	\end{center}
	
	\noindent A representation $\bA_\bullet$ extends to $Q^+$ if we define 
	\[
	\bA_\rho := \hspace{-2mm} \prod_{v \in V_\text{min}} \hspace{-2mm} \bA_v,
	\] and let $\bA_{e_v}:\bA_\rho \to \bA_v$ be the canonical projection map. Now each section of $\bA_\bullet$ over $Q$ extends uniquely to a section over $Q^+$, whence we have an isomorphism
	\begin{align}\label{eq:Qplussec}
	\Gamma(Q;\bA_\bullet) \simeq \Gamma(Q^+;\bA_\bullet).
	\end{align}
	Thus, there is no loss of generality encountered when computing the sections of $\bA_\bullet$ over $Q^+$ rather than $Q$. We will also make frequent use of the following notion.
	
	\begin{definition}\label{def:eq}
		Let $n \geq 1$ be a natural number and $X,Y$ a pair of vector spaces. The {\bf equaliser} of a collection of $n$ linear maps $\set{f_i:X \to Y \mid 1 \leq i \leq n}$ is the largest subspace $\Eq\set{f_\bullet} \subset X$ satisfying $f_i(x) = f_j(x)$ for all $x$ in $\Eq\set{f_\bullet}$ and all $i,j$ in $\set{1,\ldots,n}$.
	\end{definition}

	 In practice,  for finite-dimensional $X$ the equaliser $\Eq\set{f_\bullet}$  can be computed by intersecting kernels of successive differences:
	\[
	\Eq\set{f_\bullet} = \bigcap_{i=1}^{n-1} \ker\left(f_i - f_{i+1}\right),
	\]
	with the understanding that for $n = 1$ this intersection over the empty set equals all of $X$. 
	
	\begin{definition}\label{def:flow}
		Assign to each vertex $v \in V^+$ a subspace $\Phi_v \subset \bA_\rho$ and a linear map $\phi_v: \Phi_v \to \bA_v$, called the {\bf flow space} and {\bf flow map} of $\bA_\bullet$ at $v$, inductively over the partial order $\leq$ as follows:
		\begin{enumerate}
			\item for $v = \rho$, the flow space $\Phi_\rho$ equals $\bA_\rho$, and the flow map $\phi_\rho:\Phi_\rho \to \bA_\rho$ is the identity; 
			\item for $v \neq \rho$, let $E_\text{in}(v) \subset E^+$ be the (necessarily nonempty) set of all edges $e$ satisfying $t(e) = v$. Noting that $s(e) < v$ for any such $e$, define the subspace $\Phi'_v \subset \bA_\rho$ via
			\[
			\Phi'_v := \bigcap_{e} \Phi_{s(e)},
			\]
			where $e$ ranges over $E_\text{in}(v)$. For each such $e$, the composition $\bA_{e} \circ \phi_{s(e)}$ restricts to a linear map $\Phi'_v \to \bA_v$. The flow space at $v$ is the equaliser
			\[
			\Phi_v := \Eq\set{\bA_{e} \circ \phi_{s(e)}:\Phi'_v \to \bA_v \mid e \in E_\text{in}(v)}.
			\]
			The flow map $\phi_v:\Phi_v \to \bA_v$ is given by $\bA_{e} \circ \phi_{s(e)}$ for any $e$ in $E_\text{in}(v)$.
		\end{enumerate}
	\end{definition}
		
	By construction, the flow space $\Phi_v$ for a vertex $v \neq \rho$ forms a subspace of the intersection $\bigcap_{u}\Phi_{u}$ of flow spaces ranging over all preceding vertices $u < v$. Thus, we can restrict the flow map $\phi_{u}$ at $u$ to a vector in the flow space $\Phi_v$ whenever $u \leq v$. Our affinity for flow spaces and maps stems mainly from the following result.
	
	\begin{proposition}\label{prop:flowsec}
	For each vertex $v \in V^+$, let $Q^+_{\leq v}$ be the subquiver of $Q^+$ generated by all vertices $u \leq v$ and the edges between them. Then, $\gamma$ is a section in $\Gamma(Q^+_{\leq v};\bA_\bullet)$ if and only if the vector $\gamma_\rho \in \bA_\rho$ lies in the flow space $\Phi_v$.
	\end{proposition}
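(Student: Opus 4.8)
The plan is to argue by induction on the vertex $v$ with respect to the partial order $\leq$ on $V^+$. This is a legitimate (well-founded) induction because $Q^+$ is finite and acyclic, and $\rho$ is its unique minimal element: every $v \in V$ has a nonempty $E_\text{in}(v)$ in $Q^+$, namely the edge $e_v$ from $\rho$ when $v \in V_\text{min}$ and an edge of $Q$ otherwise. It will be convenient to prove the slightly sharper statement that $\gamma \mapsto \gamma_\rho$ restricts to a bijection between $\Gamma(Q^+_{\leq v};\bA_\bullet)$ and $\Phi_v$, with inverse sending $x \in \Phi_v$ to the family $(\gamma_u = \phi_u(x))_{u \leq v}$ (this makes sense because $x \in \Phi_v \subseteq \Phi_u$ for all $u < v$, as recorded just before the proposition); the asserted form follows at once. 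The base case $v = \rho$ is immediate: $Q^+_{\leq \rho}$ is a single vertex with no edges, every singleton family is a section, $\Phi_\rho = \bA_\rho$, and $\phi_\rho = \mathrm{id}$.

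For the inductive step, fix $v \neq \rho$ and assume the sharpened claim for every $u < v$. The combinatorial heart of the argument is a decomposition of $Q^+_{\leq v}$. Since $Q^+$ is acyclic, no edge of $Q^+_{\leq v}$ has source $v$, so $Q^+_{\leq v}$ is obtained from the full subquiver $Q^+_{<v}$ on $\set{u \mid u < v}$ by adjoining the vertex $v$ together with the edges in $E_\text{in}(v)$, all of which have source $< v$. Moreover $Q^+_{<v} = \bigcup_{e \in E_\text{in}(v)} Q^+_{\leq s(e)}$: following the last edge of any path into $v$ shows each vertex $w < v$ satisfies $w \leq s(e)$ for some $e \in E_\text{in}(v)$, and similarly each edge with both endpoints below $v$ lies below some $s(e)$. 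Consequently, because the section condition is imposed edge by edge and every edge of a union of subquivers lies in one of the pieces, a family $\gamma$ of vectors indexed by $\set{u \mid u < v}$ is a section of $Q^+_{<v}$ if and only if its restriction to each $Q^+_{\leq s(e)}$ is a section there.

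With this in hand the two directions are bookkeeping. If $\gamma \in \Gamma(Q^+_{\leq v};\bA_\bullet)$, then for each $e \in E_\text{in}(v)$ the restriction to $Q^+_{\leq s(e)}$ is a section, so by the inductive hypothesis $\gamma_\rho \in \Phi_{s(e)}$ and $\gamma_w = \phi_w(\gamma_\rho)$ for all $w \leq s(e)$; hence $\gamma_\rho \in \Phi'_v = \bigcap_e \Phi_{s(e)}$ and $\gamma_u = \phi_u(\gamma_\rho)$ for all $u < v$. Compatibility across each $e \in E_\text{in}(v)$ now reads $\gamma_v = \bA_e(\gamma_{s(e)}) = (\bA_e \circ \phi_{s(e)})(\gamma_\rho)$; since all of these coincide with the single vector $\gamma_v$, the element $\gamma_\rho$ lies in the equaliser $\Phi_v$, and $\gamma_v = \phi_v(\gamma_\rho)$. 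Conversely, given $x \in \Phi_v$, set $\gamma_u := \phi_u(x)$ for all $u \leq v$; restricted to each $Q^+_{\leq s(e)}$ this is a section by the inductive hypothesis, hence $\gamma$ is a section of $Q^+_{<v}$, and the equaliser condition $x \in \Phi_v$ yields $\bA_e(\gamma_{s(e)}) = (\bA_e \circ \phi_{s(e)})(x) = \phi_v(x) = \gamma_v$ for every $e \in E_\text{in}(v)$, supplying compatibility across the remaining edges. Thus $\gamma$ is a section, which closes the induction.

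The step I expect to demand the most care is the combinatorial decomposition $Q^+_{<v} = \bigcup_{e \in E_\text{in}(v)} Q^+_{\leq s(e)}$ together with the observation that a section of a union of subquivers amounts to a compatible family of sections on the pieces. This is exactly what permits the inductive hypothesis to be applied separately along each incoming edge, and it is the reconciliation of these branch-wise sections that the equaliser in the definition of $\Phi_v$ is designed to perform; the rest is manipulation of the definitions of $\Phi'_v$, $\Phi_v$, and $\phi_v$.
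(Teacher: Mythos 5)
Your proposal is correct and follows essentially the same inductive argument as the paper: induction over the partial order $\leq$, using the equaliser characterisation of $\Phi_v$ to handle edges in $E_\text{in}(v)$ once the inductive hypothesis supplies $\gamma_\rho \in \Phi'_v$ and $\gamma_u = \phi_u(\gamma_\rho)$ for $u < v$. You simply make explicit two things the paper leaves implicit — the decomposition $Q^+_{<v} = \bigcup_{e \in E_\text{in}(v)} Q^+_{\leq s(e)}$ and the sharpened bijective form of the induction hypothesis — but these do not change the route.
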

	\begin{proof}
	For $v = \rho$ the result holds because in this case the spaces below are all equal:
	\[
	\Phi_\rho = \Gamma(Q^+_{\leq \rho};\bA_\bullet) = \bA_\rho,
	\]
	 with the flow map $\phi_\rho:\Phi_\rho \to \bA_\rho$ being the identity. Proceeding inductively over the partial order $\leq$, consider any $v \neq \rho$ and assume that the desired result holds for all preceding vertices $u < v$. We must show that any $x \in \Phi_v$ generates a section in $\Gamma(Q^+_{\leq v};\bA_\bullet)$ via the assignment $u \mapsto \phi_u(x)$ for every $u \leq v$. Compatibility for all edges $e$ with $t(e) \neq v$ follows from the inductive hypothesis, so it suffices to examine all edges $e \in E_\text{in}(v)$. For any such edge, Definition \ref{def:flow} yields
	 \[
	 \bA_e \circ \phi_{s(e)}(x) = \phi_v(x),
	 \]
	 hence establishing the desired compatibility. 	 Conversely, if $\gamma$ is a section in $\Gamma(Q^+_{\leq v};\bA_\bullet)$ then it suffices to show that the vector $\gamma_\rho \in \bA_\rho$ lies in the subspace $\Phi_v$. By the inductive hypothesis, we have
	 \[
	 \gamma_\rho \in \Phi'_v = \bigcap_e \Phi_{s(e)},
	 \]
	 where $e$ ranges over the edges in $E_\text{in}(v)$. By compatibility of $\gamma$ across any such $e$, we have
	 \[
	 \bA_e\circ\phi_{s(e)}(\gamma_{\rho}) = \gamma_v,
	 \]
	 so $\gamma_\rho$ lies in the equaliser $\Phi_v = \Eq\set{\bA_e \circ \phi_{s(e)} \mid e \in E_\text{in}(v)}$ as desired.
	\end{proof}
	
	Using the fact that the quiver $Q^+$ is the union of the subquivers $\set{Q^+_{\leq v} \mid v \in V^+}$, we are able to describe the sections of $\bA_\bullet$ as intersections of its flow spaces. We write $V_\text{\rm max} \subset V$ for the $\leq$-maximal vertices (i.e., the vertices which do not serve as sources of edges in $E^+$).
	
	\begin{proposition} \label{prop:acycsec}
	 Let $\bA_\bullet$ be a representation of an acyclic quiver $Q$, and $Q^+$ the augmented quiver (as in Definition \ref{def:augquiv}). We have an isomorphism
	 \[
	 \Gamma(Q;\bA_\bullet) \simeq \hspace{-2mm} \bigcap_{v \in V_\text{\rm max}} \hspace{-2mm} \Phi_v
	 \]
	 between the sections of $\bA_\bullet$ over $Q$ and the intersection of the flow spaces of $\bA_\bullet$ at the maximal vertices.
	\end{proposition}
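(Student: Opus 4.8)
The plan is to combine the isomorphism $\Gamma(Q;\bA_\bullet)\simeq\Gamma(Q^+;\bA_\bullet)$ of \eqref{eq:Qplussec} with Proposition \ref{prop:flowsec}, applied over the family of down-set subquivers $\set{Q^+_{\leq v}\mid v\in V_\text{\rm max}}$. First I would note that these subquivers cover $Q^+$: every vertex $w\in V^+$ satisfies $w\leq v$ for some maximal $v$ (follow edges out of $w$; finiteness and acyclicity of $Q^+$ force the walk to halt at a maximal vertex), and every edge $e\in E^+$ then lies in $Q^+_{\leq v}$ for any maximal $v$ with $t(e)\leq v$, since both $s(e)$ and $t(e)$ precede $v$. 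Because the edge sets of these subquivers exhaust $E^+$, a tuple $\gamma\in\tot(\bA_\bullet)=\prod_{w\in V^+}\bA_w$ is a section over $Q^+$ if and only if its restriction to each $Q^+_{\leq v}$, $v\in V_\text{\rm max}$, is a section over $Q^+_{\leq v}$: restriction of a section is always a section, and conversely compatibility across an arbitrary edge $e\in E^+$ is witnessed inside whichever piece contains $e$.

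Next I would invoke the monotonicity of flow spaces recorded just after Definition \ref{def:flow}: the containment $\Phi_v\subseteq\bigcap_{u<v}\Phi_u$ built into the construction yields, by induction along the partial order, that $\Phi_v\subseteq\Phi_w$ whenever $w\leq v$. Two things follow. First, since each $\Phi_w$ contains $\Phi_v$ for some maximal $v$, the intersection $\bigcap_{w\in V^+}\Phi_w$ already equals $\bigcap_{v\in V_\text{\rm max}}\Phi_v$ --- this is what makes the maximal vertices the right index set in the statement. Second, for any $x\in\bigcap_{v\in V_\text{\rm max}}\Phi_v$ and any vertex $w\in V^+$ we have $x\in\Phi_w$, so the flow map value $\phi_w(x)\in\bA_w$ is defined.

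With this in place I would exhibit the isomorphism explicitly as $\Psi\colon\Gamma(Q^+;\bA_\bullet)\to\bigcap_{v\in V_\text{\rm max}}\Phi_v$, $\gamma\mapsto\gamma_\rho$, with inverse $x\mapsto(w\mapsto\phi_w(x))$. Proposition \ref{prop:flowsec} does all the work: restricting a section $\gamma$ over $Q^+$ to any $Q^+_{\leq v}$ shows $\gamma_\rho\in\Phi_v$, so $\Psi$ lands in the stated intersection; the proof of that proposition further identifies the restricted section as $w\mapsto\phi_w(\gamma_\rho)$, so $\gamma$ is determined by $\gamma_\rho$ and $\Psi$ is injective; and given $x$ in the intersection, the tuple $\gamma_w:=\phi_w(x)$ restricts by Proposition \ref{prop:flowsec} to a section over each $Q^+_{\leq v}$ with $v$ maximal, hence is a section over $Q^+$ by the covering observation, with $\Psi(\gamma)=\phi_\rho(x)=x$. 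Linearity of $\Psi$ and of its inverse is immediate.

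The only real bookkeeping hurdle is the covering step --- verifying that global compatibility over $E^+$ decomposes as compatibility over the pieces $Q^+_{\leq v}$ indexed by maximal vertices --- which rests on the elementary fact that each edge of $Q^+$ is dominated at its target by a maximal vertex. Once that is granted, the proposition is a direct consequence of Proposition \ref{prop:flowsec} and the monotonicity of flow spaces.
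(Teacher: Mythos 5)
Your proof is correct and follows essentially the same route as the paper's: both combine \eqref{eq:Qplussec}, Proposition \ref{prop:flowsec}, and the covering of $Q^+$ by the down-sets $Q^+_{\leq v}$, then use the monotonicity of flow spaces to restrict the intersection to maximal vertices. The paper states this more tersely (covering over all $v \in V$, then noting maximal vertices give the smallest flow spaces), while you index directly by $V_{\text{max}}$ and spell out the inverse isomorphism, but the argument is the same.
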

	\begin{proof}
	Combining \eqref{eq:Qplussec} with Proposition \ref{prop:flowsec} and the fact that $Q^+ = \bigcup_{v \in V} Q^+_{\leq v}$ gives
	\[
	\Gamma(Q;\bA_\bullet) \simeq \bigcap_{v \in V} \Phi_v.
	\]
     Since maximal vertices have the smallest flow spaces, by Definition \ref{def:flow}, the desired result follows. 
	\end{proof}
	
	For brevity, we write $\Phi(\bA_\bullet)$ to indicate the intersection $\bigcap_v \Phi_v$ of flow spaces ranging over $V_\text{max}$ (or, equivalently, over $V$). By employing breadth-first search \cite[Chapter 3.3]{bangjensen2009digraphs} on $Q^+$ starting at $\rho$, one can construct a spanning arborescence $T^+ \subset Q^+$ with root $\rho$. This arborescence $T^+$ must necessarily contain all the vertices in $V^+$ and all the edges in  $(E^+ - E)$, but in general it is not uniquely determined otherwise. One possible spanning arborescence for the augmented quiver drawn above is obtained by removing the light-shaded edges below:
	
    \begin{center}
    \includegraphics[scale=.6]{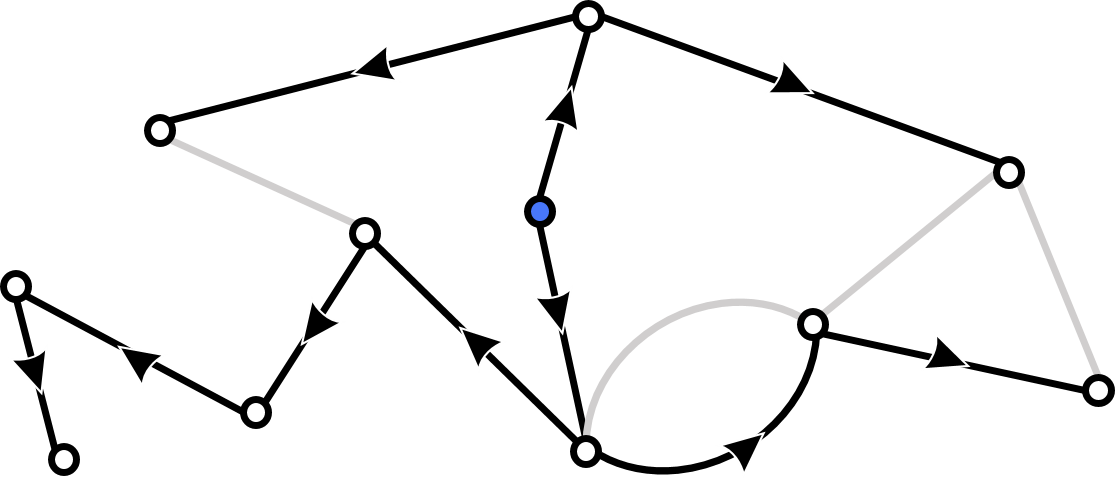}
    \end{center}
	
	\begin{definition}\label{def:arbrepl}
	Let $T^+ \subset Q^+$ be any spanning arborescence with root $\rho$. An {\bf arboreal replacement} of $\bA_\bullet$ is the representation $\bA^+_\bullet$ of $T^+$ that assigns
	\[
		\bA^+_v := \begin{cases}
		\bA_v & v \neq \rho,\\
		\Phi(\bA_\bullet) & v = \rho;
		\end{cases} \quad \text{ and } \quad
		\bA^+_e := \begin{cases}
		\bA_e & s(e) \neq \rho,\\
		\bA_e\big|_{\Phi(\bA_\bullet)} & s(e)=\rho.
		\end{cases}		 					
	\]
	\end{definition}
	
	The following result is obtained by combining Proposition \ref{prop:acycsec} with Proposition \ref{prop:arbsec}.
	
	\begin{corollary}\label{cor:arbrep}
	  Let $\bA_\bullet$ be a representation of an acyclic quiver $Q$ and $T^+$ a spanning arborescence of the augmented quiver $Q^+$. There is an isomorphism
	  \[
	  \Gamma(Q;\bA_\bullet) \simeq \Gamma(T^+;\bA^+_\bullet)
	  \]
	  between the sections of $\bA_\bullet$ and those of its arboreal replacement $\bA_\bullet^+$ defined on $T^+$.
	\end{corollary}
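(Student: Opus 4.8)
The plan is to observe that both sides of the claimed isomorphism have already been identified with the same subspace $\Phi(\bA_\bullet) \subseteq \bA_\rho$, so the corollary follows by composing two previously established isomorphisms. Concretely, Proposition \ref{prop:acycsec} gives $\Gamma(Q;\bA_\bullet) \simeq \Phi(\bA_\bullet)$. For the other side, recall that $T^+$ is an arborescence with root $\rho$ and that the arboreal replacement assigns the root vector space $\bA^+_\rho = \Phi(\bA_\bullet)$; hence Proposition \ref{prop:arbsec}, applied to the representation $\bA^+_\bullet$ of $T^+$, yields $\Gamma(T^+;\bA^+_\bullet) \simeq \bA^+_\rho = \Phi(\bA_\bullet)$, with each section determined by its value $x = \gamma_\rho$ via $\gamma_v = \bA^+_{p[v]}(x)$, where $p[v]$ is the unique path from $\rho$ to $v$ in $T^+$. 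Chaining these gives $\Gamma(Q;\bA_\bullet) \simeq \Gamma(T^+;\bA^+_\bullet)$.

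It is worth confirming that this composite is the natural restriction-of-sections map, and in particular that it does not depend on the chosen spanning arborescence $T^+$. To this end I would trace a root vector $x \in \Phi(\bA_\bullet)$ through both descriptions. On the one hand, since $\Phi(\bA_\bullet) \subseteq \Phi_v$ for every $v \in V^+$, Proposition \ref{prop:flowsec} shows that $x$ generates a section of $\bA_\bullet$ over each $Q^+_{\leq v}$, hence over all of $Q^+ = \bigcup_v Q^+_{\leq v}$, via $\gamma_v = \phi_v(x)$; by \eqref{eq:Qplussec} this restricts to a section over $Q$. On the other hand, $x$ generates the section of $\bA^+_\bullet$ over $T^+$ given by $\gamma_v = \bA^+_{p[v]}(x)$. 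These two sections coincide: arguing inductively over the partial order $\leq$, if $e$ is the unique edge of $T^+$ with target $v \neq \rho$ (equivalently, the last edge of $p[v]$), then Definition \ref{def:flow} gives $\phi_v(x) = \bA_e\bigl(\phi_{s(e)}(x)\bigr)$, which by the inductive hypothesis and the definition of $\bA^+_e$ equals $\bA^+_e\bigl(\bA^+_{p[s(e)]}(x)\bigr) = \bA^+_{p[v]}(x)$.

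I do not anticipate a real obstacle: the substantive work has already been carried out in Propositions \ref{prop:acycsec}, \ref{prop:flowsec}, and \ref{prop:arbsec}, together with \eqref{eq:Qplussec}. The only step requiring a little care is the inductive identification in the preceding paragraph --- that the flow-map presentation of the section emanating from $x$ agrees with the arborescence-path presentation along $T^+$; this routine induction on $\leq$ simultaneously shows that the resulting isomorphism is independent of the choice of spanning arborescence.
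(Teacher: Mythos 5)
Your proposal is correct and matches the paper's own argument, which likewise obtains the corollary by combining Proposition~\ref{prop:acycsec} (identifying $\Gamma(Q;\bA_\bullet)$ with $\Phi(\bA_\bullet)$) and Proposition~\ref{prop:arbsec} (identifying $\Gamma(T^+;\bA^+_\bullet)$ with $\bA^+_\rho = \Phi(\bA_\bullet)$). Your additional inductive check that the composite isomorphism agrees with the natural section map and is independent of the choice of $T^+$ is a welcome verification beyond what the paper states explicitly, but the core argument is the same.
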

	
	\section{The Space of Sections}
	\label{sec:dimension}

 We are now ready to establish Theorem (A) from the Introduction. 
 
\begin{theorem}\label{thm:mainrev}
For any representation $\bA_\bullet$ of a quiver $Q = (s,t:E \to V)$, the following spaces are all isomorphic: 
\begin{align}\label{eq:allsec}
\Gamma(Q;\bA_\bullet) \simeq \Gamma(Q^*;\bA^*_\bullet) \simeq \Gamma(T^+;\bA^+_\bullet) \simeq \bA^+_\rho.
\end{align}
Here, $Q^*$ is the acyclic reduction of $Q$ with $\bA_\bullet^*$ the acyclification of $\bA_\bullet$. Similarly, writing $Q^+$ for the augmented quiver associated to $Q^*$ with root $\rho$, the representation $\bA_\bullet^+$ is the arboreal replacement of~$\bA_\bullet^*$ defined on any spanning arborescence $T^+ \subset Q^+$. 
\end{theorem}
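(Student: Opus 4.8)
The plan is to prove Theorem \ref{thm:mainrev} by simply concatenating the three section-preserving reductions built in Sections \ref{sec:scq}--\ref{sec:arboreal}; each isomorphism in the chain \eqref{eq:allsec} has essentially already been established, so the remaining work is to check that the hypotheses of the cited results line up and to identify the resulting composite with the map $F$ of Theorem (A).

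First I would invoke Proposition \ref{prop:acycred}: applied to the representation $\bA_\bullet$ of $Q$ together with the chosen ear decompositions $\{R_\bullet \mid R \in \MSC(Q)\}$, it yields $\Gamma(Q;\bA_\bullet) \simeq \Gamma(Q^*;\bA^*_\bullet)$, where $Q^*$ is the acyclic reduction and $\bA^*_\bullet$ the acyclification. The point to record here is that $Q^*$ is genuinely acyclic — as observed right after Definition \ref{def:acycred}, deleting the terminal edges of each maximal strongly connected subquiver turns it into the arborescence $T(R_\bullet)$, so no cycle of $Q$ can survive. This is where the machinery of Sections \ref{sec:scq}--\ref{sec:acyclic} (ear decompositions, induced arborescences, $R$-constrained subspaces) is consumed; after this step $Q^*$ is acyclic and the constructions of Section \ref{sec:arboreal} apply verbatim.

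Next, treating $Q^*$ as the acyclic quiver of Section \ref{sec:arboreal} and $\bA^*_\bullet$ as its representation, I would form the augmented quiver $Q^+ = (Q^*)^+$ of Definition \ref{def:augquiv} by adjoining a global-minimum vertex $\rho$ with edges to the minimal vertices of $Q^*$, extend $\bA^*_\bullet$ across $\rho$ by the product-and-projections recipe, choose (via breadth-first search from $\rho$) a spanning arborescence $T^+ \subset Q^+$ rooted at $\rho$, and form the arboreal replacement $\bA^+_\bullet$ of Definition \ref{def:arbrepl}. Corollary \ref{cor:arbrep} then gives $\Gamma(Q^*;\bA^*_\bullet) \simeq \Gamma(T^+;\bA^+_\bullet)$, and Proposition \ref{prop:arbsec}, applied to the arborescence $T^+$ with root $\rho$, gives $\Gamma(T^+;\bA^+_\bullet) \simeq \bA^+_\rho$. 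Splicing the three isomorphisms produces \eqref{eq:allsec}. To land on the precise form of Theorem (A) I would unwind the composite: Proposition \ref{prop:arbsec} sends $x \in \bA^+_\rho$ to the section of $\bA^+_\bullet$ with coordinates $\gamma_v = \bA^+_{p[v]}(x)$, where $p[v]$ is the unique $\rho$-to-$v$ path in $T^+$; forgetting the $\rho$-coordinate and using that $\bA^+_e = \bA_e$ for every edge $e$ with $s(e) \neq \rho$ (the first edge of a path merely restricts the domain), this is exactly the map $F:\bA^+_\rho \to \tot(\bA_\bullet)$ obtained by composing the $\bA_\bullet$-maps along the $T^+$-paths out of $\rho$. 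Injectivity of $F$, and hence that it has full rank with $\dim \Gamma(Q;\bA_\bullet) = \dim \bA^+_\rho$, is immediate from the uniqueness clause of Proposition \ref{prop:arbsec}; and independence of $\im F$ from the various choices needs no separate argument, since the chain identifies $\im F$ with the subspace $\Gamma(Q;\bA_\bullet) \subset \tot(\bA_\bullet)$, which was defined with no reference to any choice.

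I do not expect a serious obstacle here: the theorem is a bookkeeping assembly of earlier results. The only things that demand care are (i) confirming that $Q^*$ is acyclic so that Section \ref{sec:arboreal} is applicable, (ii) keeping the augmented root $\rho$ of Section \ref{sec:arboreal} notationally distinct from the root map $\rho\colon\MSC(Q)\to V$ of Section \ref{sec:acyclic}, and (iii) checking that the composite isomorphism is genuinely \emph{induced by} the explicit map $F$, rather than merely abstractly isomorphic to its image.
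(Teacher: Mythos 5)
Your proof is correct and follows the paper's own argument exactly: the paper proves Theorem~\ref{thm:mainrev} by citing Proposition~\ref{prop:acycred}, Corollary~\ref{cor:arbrep}, and Proposition~\ref{prop:arbsec} for the three isomorphisms in turn, and then (in the paragraph immediately after the theorem) unwinds the composite to define $F$ and observe that its image is $\Gamma(Q;\bA_\bullet)$ independent of choices, just as you do. Your additional checks (acyclicity of $Q^*$, disambiguating the two uses of $\rho$, identifying the composite with the explicit path map) are exactly the bookkeeping the paper implicitly relies on.
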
	
\begin{proof}
The first isomorphism follows from Proposition \ref{prop:acycred}, the second from Corollary \ref{cor:arbrep}, and the third from Proposition \ref{prop:arbsec}.
\end{proof}

Theorem (A) asserts the existence of an isomorphism $\bA^+_\rho \simeq \Gamma(Q;\bA_\bullet)$ as a map $F$, which we now describe. Assuming the hypotheses and notation of Theorem \ref{thm:mainrev}, there are containments
\[
\bA^+_v \subset \bA^*_v \subset \bA_v,
\]
for each vertex $v$ in $V$, by Definition \ref{def:acycrep} and Definition \ref{def:arbrepl}. 
Since $T^+$ is an arborescence, it admits a unique path $p[v]$ from its root $\rho$ to any such $v$. This path carries a linear map $\bA^+_{p[v]}:\bA^+_\rho \to \bA^+_v$, and the collection of all such linear maps (indexed over $v \in V$) assembles to furnish a single map to the direct product:
\[
\bA^+_\rho \to \prod_v \bA^+_v.
\]
In light of the containments $\bA^+_v \subset \bA_v$ described above, the codomain is a subspace of $\tot(\bA_\bullet)$. Thus, we obtain a linear map
\begin{align} \label{eq:F}
F:\bA^+_\rho \to \tot(\bA_\bullet),
\end{align}
whose image of $F$ inside $\tot(\bA_\bullet)$ is an isomorphically embedded copy of $\Gamma(Q;\bA_\bullet)$. Although various choices (of ear decompositions and spanning arborescences) made above will produce different $F$'s, the image of $F$ remains invariant. 

\subsection{Lower bounds on the dimension}

As stated in the Introduction, we will define principal components along $\bA_\bullet$ as solutions to an optimisation problem over $\Gamma(Q;\bA_\bullet)$. In order for this to be a non-trivial problem, one requires the dimension $d := \dim \Gamma(Q;\bA_\bullet)$ to exceed zero.
We therefore take a brief detour here in order to highlight some sufficient conditions (on $Q$ and $\bA_\bullet)$ which give lower bounds on $d$.  Among the simplest cases to analyse in terms of the topology of $Q$ are the extreme ones, as recorded in the following observation.

\begin{proposition}
\label{prop:two-observations}
Let $\bA_\bullet$ be a representation of a quiver $Q$.
\begin{enumerate}
    \item if $Q$ is an arborescence with root $\rho$, then $d = \dim \bA_\rho$; and
    \item if $Q$ is strongly connected, then $d = 0$ for all sufficiently generic $\bA_\bullet$.
\end{enumerate}
\end{proposition}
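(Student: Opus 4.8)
The first claim is immediate: if $Q$ is an arborescence with root $\rho$, then Proposition \ref{prop:arbsec} already gives $\Gamma(Q;\bA_\bullet) \simeq \bA_\rho$, so $d = \dim \bA_\rho$. No further argument is needed.

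The substance is in the second claim. My plan is to reduce to the dimension count furnished by Lemma \ref{lem:scred}, which identifies $\Gamma(Q;\bA_\bullet)$ with the intersection $K(Q_\bullet;\bA_\bullet) = \bigcap_\epsilon \ker \Delta_\epsilon$ of kernels inside $\bA_\rho$, where $\rho$ is the root of the arborescence $T = T(Q_\bullet)$ induced by a chosen ear decomposition $Q_\bullet$, and $\epsilon$ ranges over the terminal edges $E_\text{ter}$. Recall each $\Delta_\epsilon = \bA_{p[t(\epsilon)]} - \bA_\epsilon \circ \bA_{p[s(\epsilon)]} : \bA_\rho \to \bA_{t(\epsilon)}$. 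The strategy is to show that for generic choices of the linear maps $\bA_e$, each map $\Delta_\epsilon$ is surjective (equivalently has maximal rank $\min(\dim\bA_\rho, \dim\bA_{t(\epsilon)})$), and moreover that the kernels of the various $\Delta_\epsilon$ are in sufficiently general position that their intersection is forced to be trivial. Since a strongly connected quiver with at least two vertices has at least two vertices and at least two edges entering its ``cycle structure'', there is at least one terminal edge $\epsilon$ (the ear decomposition has $Q_1$ a cycle, contributing $\epsilon_1$, or else $c \geq 2$ and $\epsilon_2$ exists); I would first handle the base case where a single terminal edge already does the job, then bootstrap.

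Concretely, here is the order of steps. (i) Fix an ear decomposition $Q_\bullet$ with root $\rho$; by Theorem \ref{thm:allears} one exists. (ii) Observe that the entries of each matrix $\Delta_\epsilon$ are polynomials in the entries of the $\bA_e$'s, so the locus where $\mathrm{rk}\,\Delta_\epsilon$ drops below its generic value is a proper Zariski-closed subset of the affine space of all representations $\bA_\bullet$ (on a fixed underlying graded vector space) — provided the generic value is positive, which we must verify. (iii) To verify that $\Delta_\epsilon$ is generically surjective for at least one $\epsilon$, exhibit a single representation where it is: for instance, choose the maps along $p[t(\epsilon)]$ to compose to a surjection (possible since these are independent generic maps along a path in a tree, and we can make the composite any matrix we like by choosing the last factor), and choose $\bA_\epsilon \circ \bA_{p[s(\epsilon)]}$ to be zero, so $\Delta_\epsilon = \bA_{p[t(\epsilon)]}$ is surjective. (iv) If $\dim \bA_{t(\epsilon)} \geq \dim \bA_\rho$ for some terminal $\epsilon$, then a generic surjective $\Delta_\epsilon$ has trivial kernel and we are done. (v) Otherwise, iterate: after intersecting with $\ker\Delta_{\epsilon_1}$, the restriction of $\Delta_{\epsilon_2}$ to that subspace is again generically of maximal rank (exhibit a representation, using the independence of the maps appearing in $\Delta_{\epsilon_2}$ from those constrained so far, or argue via incidence-variety dimension counting), so the dimension drops by $\min(\dim\ker\Delta_{\epsilon_1}, \dim \bA_{t(\epsilon_2)})$ at each stage; since a strongly connected quiver is ``cyclically overdetermined'' — intuitively the total fibre dimension of the $\Delta_\epsilon$'s is at least $\dim\bA_\rho$ — the intersection collapses to $0$. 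The cleanest way to run (v) is probably to set up the incidence variety $\{(\bA_\bullet, x) : x \in \bA_\rho,\ \Delta_\epsilon(x) = 0 \ \forall \epsilon\}$, project to the representation space, and show the generic fibre has dimension $0$ by computing the dimension of the other projection's fibres.

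The main obstacle is step (v): making precise and proving that for a genuinely strongly connected quiver (not merely one cycle) the terminal-edge constraints are ``independent enough'' to kill $\bA_\rho$ generically, rather than merely cutting it down. This requires care because the maps $\bA_{p[v]}$ appearing in different $\Delta_\epsilon$'s share common factors (they traverse overlapping portions of the arborescence $T$), so the $\Delta_\epsilon$ are not independent generic maps — one cannot naively multiply probabilities. I expect the right tool is a dimension count on the incidence variety together with the observation that a representation witnessing triviality of the intersection can be built explicitly (e.g.\ take all $\bA_e$ near the identity except suitably perturbed on the terminal edges, or use the normal form of a generic representation of the relevant wild quiver), after which the Zariski-closedness from step (ii) upgrades the single witness to a generic statement.
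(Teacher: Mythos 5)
Your proof of part (1) matches the paper exactly: it is an immediate consequence of Proposition~\ref{prop:arbsec}.

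For part (2), your proposal has an acknowledged gap at step (v), and the gap need never be confronted: the paper's proof is far shorter because it avoids analysing the full collection of terminal-edge constraints. Since $Q$ is strongly connected, there exists a \emph{cycle} $p$ from $\rho$ to itself, which carries an endomorphism $\bA_p \colon \bA_\rho \to \bA_\rho$. Every section $\gamma$ must satisfy $\bA_p(\gamma_\rho) = \gamma_\rho$, i.e.\ $\gamma_\rho$ lies in the $1$-eigenspace of $\bA_p$. For generic $\bA_\bullet$ the endomorphism $\bA_p$ does not have $1$ as an eigenvalue (the locus where it does is a proper Zariski-closed set, witnessed for instance by choosing one $\bA_e$ along the cycle to be zero so that $\bA_p = 0$), hence $\gamma_\rho = 0$ and the section vanishes by Proposition~\ref{prop:arbsec}. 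That is the entire argument: a single scalar constraint from a single cycle is already enough, so there is no need for an incidence-variety dimension count over all the $\Delta_\epsilon$'s, and no issue of ``independence'' of the constraints arises.

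Your own framework actually contains this shortcut, but you did not notice it: the first terminal edge $\epsilon_1$ (or $\epsilon_2$, if $Q_1$ is a single vertex, in which case $Q_2$ is forced to be a cycle through $\rho$) satisfies $t(\epsilon) = \rho$, so $\dim \bA_{t(\epsilon)} = \dim \bA_\rho$ and your step (iv) applies unconditionally. In that case $\Delta_\epsilon = \mathrm{id}_{\bA_\rho} - \bA_{\text{cycle}}$ is a square matrix, and your witness in step (iii) (setting $\bA_\epsilon \circ \bA_{p[s(\epsilon)]} = 0$) makes it the identity, hence injective. This renders your steps (v)--(vi) unnecessary. As written, though, the proposal treats (iv) as a lucky special case and hangs its general argument on the unproved ``cyclic overdetermination'' heuristic; that is the genuine gap.
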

\begin{proof}
The first assertion follows directly from Proposition \ref{prop:arbsec}, so we concentrate on the second assertion. Let $Q_\bullet$ be an ear decomposition of $Q$ (see Definition \ref{def:eardecomp}) and $\rho$ a vertex in $Q_1$. By strong connectedness, there exists a path $p$ in $Q$ from $\rho$ to itself, which carries an endomorphism $\bA_p:\bA_\rho \to \bA_\rho$. Now any section $\gamma$ of $Q$ must satisfy $\bA_p(\gamma_\rho) = \gamma_\rho$. For generic $\bA_\bullet$, this endomorphism $\bA_p$ will not have $1$ as an eigenvalue, so $\gamma_\rho$ must be zero. The result now follows from applying Proposition \ref{prop:arbsec} to the arborescence induced by $Q_\bullet$.
\end{proof}

Although the result in part (2) of Proposition~\ref{prop:two-observations} might appear disappointing at first glance, we note that there are several interesting non-generic families of linear maps which do admit $1$ as an eigenvalue, such as those arising from row-stochastic matrices. Moreover, general quivers are neither strongly connected nor arboreal but lie somewhere in between. Using Proposition \ref{prop:acycred}, any representation of an arbitrary quiver can be reduced to a representation of an acyclic quiver while preserving $d$, so it remains to provide lower bounds on $d$ for representations of acyclic quivers.

\begin{proposition}
\label{prop:megatarget}
Let $Q$ be an acyclic quiver with minimal vertices $V_\text{\rm min}$ and maximal vertices $V_\text{\rm max}$. For any representation $\bA_\bullet$ of $Q$, we have 
\[
\dim \Gamma(Q; \bA_\bullet) \geq \hspace{-2mm} \sum_{u \in V_\text{\rm min}} \hspace{-2mm} \dim \bA_u - \hspace{-2mm} \sum_{v \in V_\text{\rm max}} \hspace{-2mm} (n_v-1) \dim \bA_v,
\]
where $n_v$ is the total number of paths in the augmented quiver $Q^+$ from the root $\rho$ to the vertex $v$.
\end{proposition}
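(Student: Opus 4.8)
The natural route is through Proposition~\ref{prop:acycsec}, which realises $\Gamma(Q;\bA_\bullet)$ as the subspace $\bigcap_{v \in V_{\mathrm{max}}} \Phi_v$ of the root space $\bA_\rho = \prod_{u \in V_{\mathrm{min}}} \bA_u$ of the augmented quiver $Q^+$. Since $\dim \bA_\rho = \sum_{u \in V_{\mathrm{min}}} \dim \bA_u$ and the codimension of a finite intersection of subspaces is at most the sum of the individual codimensions, the whole proposition follows once one proves the per-vertex estimate
\[
\operatorname{codim}_{\bA_\rho} \Phi_v \;\leq\; (n_v - 1)\,\dim \bA_v \qquad \text{for every } v \in V_{\mathrm{max}}.
\]
A cruder bound of this flavour is already visible from the arboreal picture: choosing a spanning arborescence $T^+ \subset Q^+$ and restricting $\bA_\bullet$ to it, one sees that $\Gamma(Q;\bA_\bullet)$ is cut out inside $\bA_\rho$ by one compatibility condition per non-tree edge, giving $\dim \Gamma(Q;\bA_\bullet) \geq \dim \bA_\rho - \sum_{e \notin T^+}\dim \bA_{t(e)}$. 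But this is phrased in terms of edges rather than paths, so to reach the stated inequality one must work with the flow spaces directly.

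For the per-vertex estimate, fix $v \in V_{\mathrm{max}}$, let $P_v = \{p_1,\dots,p_{n_v}\}$ be the set of all paths in $Q^+$ from $\rho$ to $v$, and single out $p_1$ as a basepoint. The claim I would establish is
\[
\Phi_v \;=\; \bigcap_{j=2}^{n_v} \ker\bigl(\bA_{p_j} - \bA_{p_1} \colon \bA_\rho \to \bA_v\bigr).
\]
Granting this, $\Phi_v$ is an intersection of $n_v-1$ subspaces, each the kernel of a linear map landing in $\bA_v$ and therefore of codimension at most $\dim \bA_v$, and subadditivity of codimension yields the estimate. The inclusion $\subseteq$ is a one-line consequence of Proposition~\ref{prop:flowsec}: any $x \in \Phi_v$ is the root value $\gamma_\rho$ of a section $\gamma$ over $Q^+_{\leq v}$, and tracing $\gamma$ along $p_j$ and along $p_1$ both recover $\gamma_v$, so $\bA_{p_j}(x) = \bA_{p_1}(x)$.

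The reverse inclusion is the crux, and I expect it to be the main obstacle: it asserts that equality of all path-maps $\rho \to v$ already forces the intermediate equaliser conditions built into Definition~\ref{def:flow} at every vertex $u$ lying on some path from $\rho$ to $v$, so that branchings of paths at interior vertices contribute no codimension beyond what is recorded at $v$ itself. My plan is to induct along a linear extension of $\leq$ restricted to the vertices $u \leq v$: at a vertex $u$ with incoming edges $e_1,\dots,e_t$ the defining constraint of $\Phi_u$ is the simultaneous vanishing of the $t-1$ differences $\bA_{e_i}\circ\phi_{s(e_i)} - \bA_{e_1}\circ\phi_{s(e_1)}$ on $\Phi'_u$, and one must express each such difference, after post-composing with the maps that carry $\bA_u$ forward along paths to $v$, in terms of the differences $\bA_{p_j}-\bA_{p_1}$, checking that no information is lost in the passage. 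This is a purely combinatorial accounting of how the paths in $P_v$ split at interior vertices; it is routine in spirit but delicate to pin down, and it is the one step not reducible to a direct citation of an earlier result (it is also where any subtler hypothesis on $\bA_\bullet$ would have to enter). As sanity checks on the mechanism: when $Q$ is an arborescence every $n_v = 1$ and the bound degenerates to $\dim \Gamma(Q;\bA_\bullet) \geq \dim \bA_\rho$, recovering Proposition~\ref{prop:two-observations}(1); and for the $k$-Kronecker quiver $n_v = k$ at the unique sink $v$, giving $\dim \Gamma(Q;\bA_\bullet) \geq \dim \bA_u - (k-1)\dim \bA_v$.
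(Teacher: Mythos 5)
Your plan is the paper's plan: reduce to the per-vertex estimate $\operatorname{codim}_{\bA_\rho}\Phi_v \leq (n_v-1)\dim\bA_v$ for $v \in V_\text{\rm max}$, then apply subadditivity of codimension and Proposition~\ref{prop:acycsec}. You actually go one step further than the paper in isolating exactly what this estimate requires, namely the containment $\Phi_v \supseteq \bigcap_{j\ge 2}\ker\bigl(\bA_{p_j}-\bA_{p_1}\bigr)$; the paper's proof silently treats $\Phi_v$ as if it were that intersection of kernels and passes on without comment.

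The step you set aside as ``routine in spirit but delicate to pin down'' is in fact false, and the stated inequality fails along with it. Take the acyclic quiver with a single source $a$, one intermediate vertex $b$ joined to $a$ by two parallel edges, and a sink $c$ joined to $b$ by one edge; put $\bA_a = \bA_b = \R^3$ and $\bA_c = \R$, let the parallel edges carry $\mathrm{id}$ and $0$, and let $b\to c$ be a coordinate projection. Then $V_\text{\rm min}=\{a\}$, $V_\text{\rm max}=\{c\}$, $n_c = 2$, so the proposition promises $\dim\Gamma(Q;\bA_\bullet) \geq 3-1 = 2$; but Definition~\ref{def:flow} already forces $\Phi_b = \ker(\mathrm{id}-0)=\{0\}$, hence $\Phi_c = \{0\}$ and $\dim\Gamma(Q;\bA_\bullet) = 0$. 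The equaliser constraint absorbed at an intermediate vertex such as $b$ is strictly stronger than anything recoverable from the path maps into $c$ once the outgoing map at $b$ fails to be injective, so the reverse inclusion you flagged simply does not hold, and no bookkeeping of how paths branch at interior vertices will repair it. A correct lower bound would have to account for the codimension contributed at every vertex with more than one incoming path, not only at the elements of $V_\text{\rm max}$; in short, your diagnosis of where the proof lives was exactly right, but the gap you identified is genuine rather than fillable.
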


\begin{proof}
By Definition \ref{def:flow}, the flow space $\Phi_\rho = \bA_\rho$ has dimension $\sum_{u \in V_\text{min}} \dim \bA_u$. We claim that the flow space $\Phi_v$ at a vertex $v \in V_\text{max}$ has codimension at most $(n_v - 1) \dim \bA_v$ in $\Phi_\rho$. To establish this claim, let $\set{f_k:\bA_\rho \to \bA_v \mid 1 \leq k \leq n_v}$ be the linear maps carried by paths from $\rho$ to $v$, and examine the $(n_v-1)$ kernels of the differences $\Delta_k = (f_k - f_{k+1})
$. Since each $\ker(\Delta_k)$ has codimension at most $\dim \bA_v$ in $\Phi_\rho$, and since the codimension of their intersection is at most the sum of these codimensions, we have $\text{codim } \Phi_v \leq (n_v - 1) \dim \bA_v$ as claimed. The inequality in the statement now follows from Proposition \ref{prop:acycsec}. 
\end{proof}

\begin{remark}\label{rmk:zerosec}
The space of sections might be trivial for several interesting representations of acyclic quivers,
Proposition \ref{prop:megatarget} notwithstanding. For example, this occurs frequently in {\em two parameter persistence modules} \cite{carlsson2009multidimensional} which arise from homology groups of bifiltered simplicial complexes. Such a module is a representation $\bA_\bullet$ of the {\em grid} quiver $Q$ whose vertices are identified with integer points $(i,j)$ with $1 \leq i,j \leq \ell$ for some integer $\ell > 0$; there are two edges from each $(i,j)$, one to $(i+1,j)$ and another to $(i,j+1)$.

Since homology is functorial, each square of the form 
\[
\xymatrixcolsep{1in}
\xymatrixrowsep{.62in}
\xymatrix{
\bA_{(i,j+1)} \ar@{->}[r] & \bA_{(i+1,j+1)} \\
\bA_{(i,j)} \ar@{->}[r] \ar@{->}[u]& \bA_{(i+1,j)} \ar@{->}[u]
}\] commutes. It follows that the space of sections of such a quiver representation is isomorphic to $\bA_{(1,1)}$, which might be trivial even though the other $\bA_{(i,j)}$ and the linear maps between them contain relevant information. As a partial remedy, one can fix a vertex $(i_0,j_0)$ of interest and restrict to the largest subquiver $Q_{\geq (i_0,j_0)} \subset Q$ containing all vertices $(i,j)$ with $i_0 \geq i$ and $j_0 \geq j$. This allows us to extract features from representations of $Q$ (as in Section \ref{sec:PCs}) even when the space of sections is trivial.
\end{remark}

\subsection{Algorithms} \label{ssec:algos}
We describe algorithms to compute the space of sections by combining graph theoretic operations on the quiver with linear algebraic operations on the representation.
That is, we give algorithms arising from Corollary~\ref{cor:scarb}, Proposition~\ref{prop:acycsec},
and Corollary~\ref{cor:arbrep}. Quiver representations $\bA_\bullet$ may be stored on computers as directed graphs whose vertices $v$ have non-negative integer weights $\dim \bA_v$ and whose edges $e$ have matrix-valued weights~$\bA_e$.

The first subroutine implements the constructions from Section \ref{sec:scq}: it ear-decomposes a given strongly-connected quiver, produces an arborescence by removing all terminal edges, and updates the overlaid representation $\bA_\bullet$ at the root vertex in accordance with Corollary~\ref{cor:scarb}. We recall that an efficient algorithm for performing ear decomposition may be found in \cite[Section 5.3]{bangjensen2009digraphs}. 

\medskip

\begin{algorithm}[H]
		\SetAlgoLined
		\KwInput{Strongly Connected quiver $R$ with representation $\bA_\bullet$}
		\KwOutput{Arborescence $T(R) \subset R$ with $\bA_\bullet$ modified}  
		
		Set $\set{R_1,\ldots,R_\ell} := {\bf EarDecompose}(R)$ \\
		Set $\rho := $ root of $R_1$ and $K := \bA_\rho$ \\
		\For{$i$ in $(1,\ldots,\ell)$}{
		     Set $\epsilon := $ terminal edge of $R_i$ \\
		     Remove $\epsilon$ from $R_i$ \\
		     Compute $\Delta_\epsilon := \bA_{p[t(\epsilon)]} - \bA_{\epsilon} \circ \bA_{p[s(\epsilon)]}$ \\
		     Set $K = K \cap \ker(\Delta_\epsilon)$
		}
		Set $\bA_\rho := K$ \\
		Return $(R,\bA_\bullet)$  
		\caption{\textbf{SCReduce} }
	\end{algorithm} 
	
\medskip

The second subroutine implements the constructions from Section \ref{sec:acyclic} by computing the acyclification of a given representation $\bA_\bullet$ of a quiver $Q$. It uses Tarjan's efficient algorithm for computing the set $\MSC(Q)$ of maximal strongly connected components \cite[Section 5.2]{bangjensen2009digraphs}. The {\bf BFSEqualise} function invoked in line 5 is an enhancement of the standard breadth-first search algorithm to do the following computation. Starting from the root $\rho$ of a given $R \in \MSC(Q)$, it finds all edges $e$ with target $\rho$, and replaces each vector space $\bA_{s(e)}$ with the $R$-constrained subspace $\Lambda_{s(e),R}$ from Definition \ref{def:LambdavR}. It then recursively repeats this operation, starting from $s(e)$ rather than $\rho$, until all vertices that admit paths to $\rho$ have been processed.

\medskip

    \begin{algorithm}[H]
     \SetAlgoLined
     \KwInput{A representation $\bA_\bullet$ of a quiver $Q$}
     \KwOutput{The acyclification $\bA^*_\bullet$ and the acyclic reduction $Q^*$}
     \caption{\textbf{AcycReduce}}
     Compute $\MSC(Q)$ \\
     \For{$R$ in $\MSC(Q)$}{
        Set $(T(R),\bA^\circ_\bullet) := \text{\bf SCReduce}(R,\bA_\bullet)$ \\
        Set $\rho := $ root of $T(R)$ \\
        {\bf BFSEqualise}$(\rho,Q,\bA_\bullet)$
        }
     Return $(Q,\bA_\bullet)$ 
    \end{algorithm}
    
\medskip

Our final subroutine is based on Section \ref{sec:arboreal}. It takes as input a representation of an acyclic quiver (such as one produced by {\bf AcycReduce}). The algorithm augments the quiver with a new root, and inductively builds the arboreal replacement by constructing flow spaces and maps (see Definition \ref{def:flow}). The subroutine {\bf Augment} builds $Q^+$ from $Q$ (as in Definition \ref{def:augquiv}) and extends the representation $\bA_\bullet$ to $\bA^+_\bullet$ by letting $\bA^+_\rho$ be the product of $\bA_v$ over initial vertices $v$. The function {\bf TopSort} builds a linear ordering of the vertices that respects the path-induced partial order (this is often called a {\em topological sorting} in the graph theory literature). Finally, the function {\bf SpanArb} uses breadth-first search to construct a spanning arborescence $T^+ \subset Q^+$ with root $\rho$.

\medskip

\begin{algorithm}[H]
     \SetAlgoLined
     \KwInput{An acyclic quiver $Q$ with representation $\bA_\bullet$}
     \KwOutput{A spanning arborescence $T^+$ and arboreal replacement $\bA^+_\bullet$}
     \caption{\textbf{ArbReplace}}
     Set $(Q^+,\bA^+_\bullet) := \textbf{Augment}(Q,\bA_\bullet)$ \\
     \textbf{TopSort}$(Q^+)$, label vertices $\set{\rho,v_1,\ldots,v_m}$  \\
     Set $\Phi_\rho := \bA_\rho$ and $\phi_\rho:\Phi_\rho \to \bA_\rho$ the identity map \\
     \For{$i$ in $(1,2,\ldots,m)$}{
           Set $\Phi'_{v_i} = \bigcap_{t(e)=v_i} \Phi_{s(e)}$ \\
           Set $\Phi_{v_i} := \text{Eq}\set{\bA_e \circ \phi_{s(e)}:\Phi'_{v_i} \to \bA_{v_i} \mid t(e)=v_i}$ \\
           Set $\phi_{v_i} := \bA_e \circ \phi_{s(e)}$ for any $e$ with $t(e)=v_i$. \\
           Set $\bA_\rho := \bA_\rho \cap \Phi_{v_i}$ \\
        }
     Set $T^+ := \textbf{SpanArb}(Q^+,\rho)$ \\
     Return $(T^+,\bA_\bullet)$ 
\end{algorithm}

\medskip

To compute the space of global sections $\Gamma(Q;\bA_\bullet)$, we invoke
\begin{align} \label{eq:secall}
\textbf{ArbReplace}\big((\textbf{AcycReduce}(Q,\bA)\big) 
\end{align}
This produces a representation $\bA^+$ of an arborescence $T^+$ so, by Proposition \ref{prop:arbsec}, the vector space $\bA^+_\rho$ at the root vertex $\rho$ yields the space of sections $\Gamma(Q;\bA_\bullet)$. At each non-root vertex $v$ of $T^+$, the output vector space $\bA^+_v$ is a subspace of the original $\bA_v$. Thus, we can compute an embedding $\Gamma(Q;\bA_\bullet) \inj \tot(\bA_\bullet)$: the component $\Gamma(Q;\bA_\bullet) \inj \bA_v$ for vertex $v$ equals $\bA^+_{p[v]}$, where $p$ is the unique path in $T^+$ from $\rho$ to $v$.

\subsection{Computational complexity} \label{ssec:complexity}

Let $Q = (s,t:E \to V)$ be a quiver with $n_V$ vertices and $n_E$ edges. Fix a representation $\bA_\bullet$ of $Q$ with $n_\bA := \max_v\set{\dim \bA_v}$. We assume throughout that scalar operations in the underlying field take $O(1)$ time.

\begin{remark}
Fix a basis for each vector space $\bA_v$, so the linear maps $\bA_e:\bA_{s(e)} \to \bA_{t(e)}$ can be expressed as matrices. Ordering the vertices and edges of $Q$ arbitrarily, let $M = M(\bA_\bullet)$ be the block matrix whose column blocks are indexed by vertices $v \in V$, row blocks are indexed by edges $e \in E$, and whose $(e,v)$-block is 
\[
M_{e,v} := \begin{cases} -\bA_e & \text{if } v = s(e) \\
                        \text{Id}_{\bA_v} & \text{if } v = t(e) \\
                        0 & \text{otherwise.}
\end{cases}
\]
The subspace $\Gamma(Q;\bA_\bullet)$ is the kernel of $M$, cf. Remark \ref{rmk:whyalg}. 
Thus, computing a basis for this space na\"ively requires Gaussian elimination on the augmented matrix 
\[
M' := \left[\text{Id}_{\tot(\bA_\bullet)} ~\mid~ M\T\right].
\] 
In the worst case, $M'$ has $n_Vn_\bA$ rows and $2n_En_\bA$ columns. Since we may need up to $O(n_V^2n_\bA^2)$ row operations, and since each such operation incurs a cost of $O(n_En_\bA)$, the time complexity is $O(n_V^2n_En_\bA^3)$.
\end{remark}
Here we establish the following.

\begin{corollary}
\label{cor:overall_complexity}
The algorithms from Section \ref{ssec:algos} invoked using \eqref{eq:secall} extract a basis for $\Gamma(Q;\bA_\bullet)$ in time
\begin{align*}\label{eq:totalcost}
O\left(n_V(n_V+n_E)n_\bA^3\right).
\end{align*}
\end{corollary}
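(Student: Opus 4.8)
The plan is to analyse the three subroutines invoked by \eqref{eq:secall} in sequence, bounding the cost of each by tracking both the graph-theoretic operations and the linear-algebraic operations on matrices of size at most $n_\bA \times n_\bA$. First I would recall the standard facts that all the purely combinatorial ingredients --- Tarjan's algorithm for $\MSC(Q)$, ear decomposition, topological sorting, breadth-first search for spanning arborescences --- run in time $O(n_V + n_E)$, so these never dominate. The real cost comes from the matrix operations: each Gaussian elimination, kernel computation, or matrix multiplication on $n_\bA \times n_\bA$ matrices costs $O(n_\bA^3)$, and each such operation is associated either to an edge of $Q$ or to a vertex of $Q$.

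Next I would go through the subroutines one at a time. For \textbf{SCReduce} applied to a strongly connected component $R$ with $n_E(R)$ edges: the loop runs once per ear, hence at most $n_E(R)$ times; in each iteration we compute a composite map $\bA_{p[\cdot]}$ along a path (whose total length, summed over all iterations, is $O(n_E(R))$ because the paths $p[v]$ live in the induced arborescence), form the difference $\Delta_\epsilon$, and intersect a running kernel $K \subseteq \bA_\rho$ with $\ker(\Delta_\epsilon)$. Since $\dim \bA_\rho \le n_\bA$, each kernel intersection is $O(n_\bA^3)$, and the path composites amortise to $O(n_E(R) \, n_\bA^3)$ total. Summing over all $R \in \MSC(Q)$ and using that the components have disjoint edge sets gives $O(n_E \, n_\bA^3)$ for all of \textbf{AcycReduce}, plus the \textbf{BFSEqualise} passes which similarly touch each edge a bounded number of times at cost $O(n_\bA^3)$ each, again $O(n_E\, n_\bA^3)$ overall. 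Then for \textbf{ArbReplace} on the resulting acyclic quiver: the main loop runs over the $m \le n_V$ non-root vertices of $Q^+$; for vertex $v_i$ we intersect flow spaces over the incoming edges (cost $O(|E_\text{in}(v_i)| \, n_\bA^3)$, summing to $O(n_E \, n_\bA^3)$), compute an equaliser as an intersection of kernels of successive differences $\bA_e \circ \phi_{s(e)}$ over $E_\text{in}(v_i)$ (again $O(|E_\text{in}(v_i)|\, n_\bA^3)$), and then update the running root space $\bA_\rho \leftarrow \bA_\rho \cap \Phi_{v_i}$, which is the one genuinely per-vertex operation and costs $O(n_\bA^3)$ --- but note $\dim \bA_\rho = \sum_{u \in V_\text{min}} \dim \bA_u$ can be as large as $n_V n_\bA$, so this intersection is really on matrices with up to $n_V n_\bA$ rows, giving $O(n_V \, n_\bA^3)$ per vertex and hence $O(n_V^2 \, n_\bA^3)$ across the loop. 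Finally, reconstructing the embedding $\Gamma(Q;\bA_\bullet) \inj \tot(\bA_\bullet)$ composes maps along the $n_V$ paths $p[v]$ in $T^+$, total path length $O(n_V)$, at cost $O(n_\bA^3)$ per edge, i.e. $O(n_V \, n_\bA^3)$.

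Adding the pieces, the acyclic-reduction stage contributes $O(n_E \, n_\bA^3)$, the arboreal-replacement stage contributes $O((n_V^2 + n_E)\, n_\bA^3)$, and the combinatorial overhead $O(n_V + n_E)$ is absorbed; the total is $O((n_V^2 + n_V n_E + n_E)\,n_\bA^3) = O\big(n_V(n_V + n_E)\, n_\bA^3\big)$, matching the claim (using $n_E \le n_V(n_V+n_E)$). I would present this as a short sequence of displayed bounds, one per subroutine, then sum.

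The step I expect to be the main obstacle --- or at least the one requiring the most care --- is bounding the flow-space construction in \textbf{ArbReplace} correctly, specifically keeping honest track of the two different ``sizes'' in play: the flow spaces $\Phi_v$ and $\Phi'_v$ are subspaces of $\bA_\rho$, which has dimension up to $n_V n_\bA$, not $n_\bA$, so naive per-vertex kernel/equaliser computations in $\bA_\rho$ would cost $O((n_V n_\bA)^3)$ and blow the bound. The resolution is that the equaliser at $v$ is computed from the maps $\bA_e \circ \phi_{s(e)} : \Phi'_v \to \bA_{v_i}$ whose \emph{codomain} has dimension at most $n_\bA$, so each difference has at most $n_\bA$ rows (and $\le n_V n_\bA$ columns), making a single such kernel computation $O(n_V n_\bA^2 \cdot n_\bA) = O(n_V n_\bA^3)$; amortising the $\sum_i |E_\text{in}(v_i)| = n_E$ equaliser pieces and the $n_V$ root-space updates then yields the stated $O((n_V^2 + n_E)\,n_\bA^3)$ for this stage, and one must also confirm that representing each $\Phi_v$ by a basis of its ambient coordinates in $\bA_\rho$ (rather than re-deriving it from scratch) keeps the bookkeeping linear. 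Everything else is routine amortisation over edges using the disjointness of edge sets of the strongly connected components and the bounded total length of root-to-vertex paths in arborescences.
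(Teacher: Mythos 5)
Your approach mirrors the paper's: bound the complexity of each subroutine (\textbf{SCReduce}, \textbf{AcycReduce}, \textbf{ArbReplace}) separately, observe that the combinatorial graph operations cost only $O(n_V+n_E)$ and are dominated by the linear algebra, and then sum --- the paper packages this as Lemmas~\ref{lem:scredcomp}, \ref{lem:acycredcomp}, \ref{lem:arbrepcomp}. You also correctly pinpoint the key subtlety, namely that the flow spaces $\Phi_v$ and $\Phi'_v$ are subspaces of $\bA_\rho$ with dimension up to $n_V n_\bA$, so each equaliser kernel is computed on a matrix with $O(n_\bA)$ rows but $O(n_V n_\bA)$ columns and hence costs $O(n_V n_\bA^3)$ rather than $O(n_\bA^3)$; this is exactly the accounting behind the paper's $n_V n_E n_\bA^3$ term.

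That said, your intermediate bounds do not cohere with the total you display. The first paragraph charges each equaliser piece $O(n_\bA^3)$ and concludes that the \textbf{ArbReplace} stage costs $O((n_V^2+n_E)n_\bA^3)$, but the obstacle paragraph (correctly) charges $O(n_V n_\bA^3)$ per piece; amortised over $\sum_i |E_\text{in}(v_i)| \leq n_E$ pieces the consistent per-stage cost is $O(n_V n_E n_\bA^3)$, which is what the paper's Lemma~\ref{lem:arbrepcomp} states. Similarly, your amortisation of \textbf{BFSEqualise} to $O(n_E n_\bA^3)$ presumes each edge is visited $O(1)$ times across all calls, but a single call rooted at $\rho(R)$ may traverse every edge on a path into $R$, and these edge sets can overlap for distinct $R \in \MSC(Q)$; the paper therefore bounds all $s \leq n_V$ calls together at $O(n_V n_E n_\bA^3)$. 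Your displayed total $O((n_V^2 + n_V n_E + n_E)\,n_\bA^3)$ happens to carry the correct $n_V n_E$ term, but that term does not follow from the two intermediate bounds you actually wrote down ($O(n_E n_\bA^3)$ and $O((n_V^2+n_E)n_\bA^3)$). The final answer $O(n_V(n_V+n_E)n_\bA^3)$ is right, and your identification of where the real difficulty lives (the dimension of $\bA_\rho$) is exactly the right insight; the issue is purely one of keeping the per-stage accounting consistent with that insight.
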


We prove Corollary~\ref{cor:overall_complexity} in three parts. The first step gives the complexity of {\bf SCReduce}.

\begin{lemma}\label{lem:scredcomp}
If $Q$ is strongly connected and admits an ear decomposition with $\ell$ ears, then the subroutine \textbf{SCReduce} has time complexity $O(n_V+n_E+n_\bA^3\ell)$ when called with input $(Q,\bA_\bullet)$.
\end{lemma}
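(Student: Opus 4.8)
The statement to prove is Lemma~\ref{lem:scredcomp}: that $\textbf{SCReduce}$ on input $(Q,\bA_\bullet)$ runs in time $O(n_V + n_E + n_\bA^3 \ell)$. The plan is to walk through the pseudocode of $\textbf{SCReduce}$ line by line and attach a cost to each step, separating the purely graph-theoretic operations (whose cost should be $O(n_V + n_E)$, the size of the quiver) from the linear-algebra operations on the representation (whose cost should be $O(n_\bA^3)$ per ear, hence $O(n_\bA^3 \ell)$ in total). First I would invoke the standard fact, cited from \cite[Section 5.3]{bangjensen2009digraphs}, that $\textbf{EarDecompose}(R)$ runs in time linear in the size of $R$, i.e. $O(n_V + n_E)$; this handles line~1, and simultaneously bounds $\ell$ and the total size $\sum_i (|V_i| + |E_i|)$ of all the ears by $O(n_V + n_E)$, since the edge sets $E_i$ partition $E$. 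Setting the root and initialising $K := \bA_\rho$ (line~2) is $O(1)$ bookkeeping plus the cost of recording an $n_\bA$-dimensional space, which is absorbed.

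The heart of the argument is the $\textbf{for}$ loop over the $\ell$ ears. Within iteration $i$: identifying and removing the terminal edge $\epsilon$ (lines 4--5) is $O(1)$; the path-composition maps $\bA_{p[t(\epsilon)]}$ and $\bA_{\epsilon} \circ \bA_{p[s(\epsilon)]}$ (line~6) are products of matrices along the paths $p[s(\epsilon)]$ and $p[t(\epsilon)]$ in the arborescence $T$ induced so far, each path having length at most $\sum_{j \le i}|E_j|$ in the worst case. A naive bound of "$O(n_E)$ matrix multiplications per ear" would give $O(n_E n_\bA^3 \ell)$, which is too weak; the fix is to observe that the flow maps $\bA_{p[v]}$ can be \emph{cached}: as the induced arborescence $T(R_\bullet)$ is built up, one stores at each vertex $v$ the composite map $\bA_{p[v]} : \bA_\rho \to \bA_v$ along the unique path from $\rho$, updating it with a single $n_\bA \times n_\bA$ matrix multiplication each time a new edge of the arborescence is processed. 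Since the arborescence has $n_V - 1$ edges total, the cumulative cost of maintaining all the $\bA_{p[v]}$ across the whole run is $O(n_V n_\bA^3)$, and then computing each $\Delta_\epsilon$ from cached maps costs only $O(n_\bA^3)$. Computing $\ker(\Delta_\epsilon)$ and intersecting it with the running subspace $K$ (line~7) are each a Gaussian elimination on matrices of size $O(n_\bA) \times O(n_\bA)$, hence $O(n_\bA^3)$ per ear. Summing over the $\ell$ ears gives $O(n_\bA^3 \ell)$ for the loop body, plus the $O(n_V n_\bA^3)$ amortised cost of the cached path maps; since $\ell \le n_V$ (each ear after the first contributes at least one new vertex, and $Q_1$ contributes at least one) this last term is absorbed into $O(n_\bA^3 \ell)$ — or, if one prefers not to argue $\ell$ versus $n_V$ here, it is in any case absorbed into the final bound $O(n_V(n_V+n_E)n_\bA^3)$ of Corollary~\ref{cor:overall_complexity}. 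Finally, reassigning $\bA_\rho := K$ and returning (lines 8--9) is $O(n_\bA^2)$ bookkeeping. Adding the graph cost $O(n_V + n_E)$ from the ear decomposition to the linear-algebra cost $O(n_\bA^3 \ell)$ from the loop yields the claimed $O(n_V + n_E + n_\bA^3 \ell)$.

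The step I expect to be the main obstacle is getting the path-composition cost in line~6 down to $O(n_\bA^3)$ per ear rather than $O(n_E n_\bA^3)$ per ear: the naive reading of the pseudocode recomputes entire matrix products along paths that can be as long as the whole quiver. Making this rigorous requires committing to the caching/amortisation scheme sketched above — maintaining the maps $\bA_{p[v]}$ incrementally as the arborescence grows — and checking that Proposition~\ref{prop:monotonepaths} guarantees these paths are well-defined and that each edge of the induced arborescence is processed exactly once. A secondary subtlety is bounding the dimensions of all intermediate matrices: the maps $\Delta_\epsilon : \bA_\rho \to \bA_{t(\epsilon)}$ have domain $\bA_\rho$, and $\dim \bA_\rho \le n_\bA$ by definition of $n_\bA$ (this is the strongly-connected case, so no augmentation-style product of many spaces occurs at the root), so all Gaussian eliminations are genuinely on $O(n_\bA) \times O(n_\bA)$ matrices and the $O(n_\bA^3)$-per-ear bound is legitimate.
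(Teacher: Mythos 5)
Your instinct to worry about line~6 is exactly right, and it is the main thing the paper's own proof does \emph{not} address. The paper's argument is much shorter: it quotes the $O(n_V+n_E)$ cost of ear decomposition, observes that the loop runs $\ell$ times, and simply asserts that each iteration is dominated by the kernel intersection in line~7, which is a single Gaussian elimination on an $n_\bA \times 2n_\bA$ matrix at cost $O(n_\bA^3)$. The path-composition cost of line~6 is not mentioned, so your proposal is genuinely attempting to fill a gap the paper leaves open. Your caching scheme is the right repair, and the amortised count is correct: the arborescence $T(R_\bullet)$ has $n_V - 1$ edges, each triggering one $n_\bA \times n_\bA$ matrix multiplication, for a total of $O(n_V n_\bA^3)$, after which each $\Delta_\epsilon$ is two cached lookups, one multiplication and one subtraction.

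The problem is your final absorption step, which runs the inequality backwards and also invokes a false claim. You assert $\ell \le n_V$ and conclude that the $O(n_V n_\bA^3)$ caching cost ``is absorbed into $O(n_\bA^3 \ell)$.'' But $\ell \le n_V$ gives $n_\bA^3\ell \le n_\bA^3 n_V$, the \emph{wrong} direction: you would need $n_V \le \ell$, which is generally false. (For a single directed cycle, $\ell = 1$ while $n_V$ is arbitrary; in fact $\ell = n_E - n_V + 1$ always, since the arborescence has $n_V - 1$ edges and there is one terminal edge per ear.) Moreover the supporting claim ``each ear after the first contributes at least one new vertex'' is not true: an ear consisting of a single edge between two pre-existing vertices contributes none. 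The honest conclusion from your own caching argument is the bound $O\bigl(n_V + n_E + (n_V + \ell)n_\bA^3\bigr) = O(n_V + n_E\,n_\bA^3)$, which for sparse quivers (e.g.\ a single cycle) strictly exceeds the claimed $O(n_V + n_E + \ell\, n_\bA^3)$. So you were right to suspect the paper's bound, but the patch you offer does not establish it; the discrepancy is real and is only absorbed at the level of the overall bound in Corollary~\ref{cor:overall_complexity}, not at the level of the lemma itself.
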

\begin{proof}
As described in \cite[Exercise 5.18]{bangjensen2009digraphs}, the cost of building an ear decomposition of $Q$ is $O(n_V+n_E)$. The {\bf for} loop spanning lines 3-8 runs $\ell$ times and, in each iteration of this loop, the computational cost is dominated by the kernel intersection in line 7. Computing this intersection requires Gaussian elimination on a matrix of size at most $n_\bA \times 2n_\bA$, which costs $O(n_\bA^3)$. Hence we obtain the complexity bound $O(n_V+n_E+n_\bA^3\ell)$.
\end{proof}

We now estimate the complexity of calling {\bf AcycReduce} with input $(Q,\bA_\bullet)$.

\begin{lemma}\label{lem:acycredcomp}
The computational complexity of {\bf AcycReduce}$(Q,\bA_\bullet)$ is
\[
O\left((n_V^2+n_E)n_\bA^3\right).
\]
\end{lemma}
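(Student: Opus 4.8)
The plan is to bound the cost of \textbf{AcycReduce}$(Q,\bA_\bullet)$ by accounting separately for (i) Tarjan's algorithm in line 1, (ii) the aggregate cost of the \textbf{SCReduce} calls across all $R\in\MSC(Q)$ in line 3, and (iii) the aggregate cost of the \textbf{BFSEqualise} calls in line 5. For (i), I would cite \cite[Section 5.2]{bangjensen2009digraphs}: computing $\MSC(Q)$ takes $O(n_V+n_E)$ time, which is dominated by the later terms. The main work is in (ii) and (iii).

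For step (ii), I would invoke Lemma \ref{lem:scredcomp}: a single call \textbf{SCReduce}$(R,\bA_\bullet)$ costs $O(n_{V}^{R}+n_{E}^{R}+n_\bA^3\ell_R)$, where $n_V^R, n_E^R$ are the vertex/edge counts of $R$ and $\ell_R$ is the number of ears in the chosen decomposition. Because the subquivers in $\MSC(Q)$ have pairwise disjoint vertex sets (as noted just before Definition \ref{def:acycred}) and hence disjoint edge sets, we have $\sum_R n_V^R \le n_V$ and $\sum_R n_E^R \le n_E$. Moreover each ear after the first contributes at least one edge of $R$ not in earlier ears, so $\ell_R \le n_E^R + 1$, giving $\sum_R \ell_R \le n_E + |\MSC(Q)| \le n_E + n_V$. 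Summing, the total cost of all \textbf{SCReduce} calls is $O\big(n_V + n_E + n_\bA^3(n_E+n_V)\big) = O\big((n_V+n_E)n_\bA^3\big)$.

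For step (iii), I would analyse \textbf{BFSEqualise}$(\rho,Q,\bA_\bullet)$ for a fixed $R$ with root $\rho$. The procedure visits each vertex $v$ that admits a path to $\rho$ in $Q^*$ and, for each in-edge $e$ at the currently processed vertex, replaces $\bA_{s(e)}$ by the $R$-constrained subspace $\Lambda_{s(e),R}$; concretely this amounts to intersecting with the preimage under $\bA_e$ of the already-computed constrained subspace downstream, a linear-algebra operation on matrices of size $O(n_\bA)\times O(n_\bA)$ costing $O(n_\bA^3)$ per edge. In the worst case BFS from $\rho$ touches every vertex and every edge of $Q$, so one call costs $O\big((n_V+n_E)n_\bA^3\big)$; running it once per $R\in\MSC(Q)$, and using $|\MSC(Q)|\le n_V$, yields a bound of $O\big(n_V(n_V+n_E)n_\bA^3\big)$ for line 5 in total. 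The ``$n_V(n_V+n_E)$'' here is what produces the $n_V^2$ and $n_E n_V$ terms; since $n_E n_V \ge n_E$, the $n_E$-term from step (ii) is absorbed, and combining all three steps gives the claimed $O\big((n_V^2+n_E)n_\bA^3\big)$ — note $n_V(n_V+n_E) = n_V^2 + n_V n_E$, and I would remark that in the stated bound the $n_V n_E$ contribution is subsumed once one observes the BFS actually processes only edges with a path to some root, so the relevant count is $O(n_V^2 + n_E)$ rather than $O(n_V^2 + n_V n_E)$.

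The step I expect to be the main obstacle is step (iii): getting the edge-processing bound tight enough to land on $n_V^2 + n_E$ rather than the cruder $n_V(n_V+n_E) = n_V^2 + n_V n_E$. The resolution is that across all the \textbf{BFSEqualise} calls, although each call may in principle revisit many vertices, a careful amortisation shows each \emph{edge} of $Q^*$ is involved in only $O(1)$ constrained-subspace updates per maximal strongly connected component whose root it can reach, and — because of the disjointness of the components and the acyclicity of $Q^*$ — the relevant reachability bookkeeping collapses the $n_V\cdot n_E$ term. I would make this precise by charging each $O(n_\bA^3)$ linear-algebra step either to a (vertex, component) pair, of which there are $O(n_V^2)$, or to an edge, of which there are $O(n_E)$, never to an (edge, component) pair. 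Everything else — the Gaussian-elimination cost estimates, the sizes of the matrices, and the summation over $\MSC(Q)$ — is routine once Lemma \ref{lem:scredcomp} and the disjointness of strongly connected components are in hand.
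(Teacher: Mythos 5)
Your steps (i) and (ii) essentially match the paper's argument: Tarjan is $O(n_V+n_E)$, and summing Lemma~\ref{lem:scredcomp} over the vertex-disjoint components of $\MSC(Q)$, using a bound of the form $\sum_j \ell_j = O(n_E + n_V)$, gives $O((n_V+n_E)n_\bA^3)$ for all \textbf{SCReduce} calls. The genuine gap is in step (iii). The per-call cost of \textbf{BFSEqualise} is $O(n_E n_\bA^3)$ (one Gaussian elimination per traversed edge), and there are $s \le n_V$ calls, giving $O(n_V n_E\,n_\bA^3)$; the amortisation you sketch to collapse this to $O((n_V^2+n_E)n_\bA^3)$ cannot be made to work. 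The natural unit of work is an (edge, component) pair: an edge $e$ is handled by the call rooted at $\rho(R)$ whenever $t(e)$ has a path in $Q^*$ to $\rho(R)$, and there is no structural reason for this to hold for only $O(1)$ components per edge, nor for the total number of such pairs to be $O(n_V^2+n_E)$. Concretely, take $Q$ on vertices $v_1,\ldots,v_k$ with a self-loop at each $v_i$ (so $s=k$, and each singleton is its own maximal strongly connected component) together with all $\binom{k}{2}$ forward edges $v_i\to v_j$ for $i<j$, all of which survive into $Q^*$. Then $n_V=k$ and $n_E=\Theta(k^2)$, the call rooted at $v_i$ traverses the $\binom{i}{2}$ edges among $v_1,\ldots,v_i$, and the total number of eliminations is $\sum_{i}\binom{i}{2}=\Theta(k^3)=\Theta(n_V n_E)$, which is not $O(n_V^2+n_E)=\Theta(k^2)$. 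So there is no way to charge each elimination to a vertex--component pair or to an edge while avoiding edge--component pairs.

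You should also be aware that the paper's own proof stops at the bound
\[
O\left(n_V+n_E+\Big(s\,n_E+\sum_{j=1}^{s}\ell_j\Big)n_\bA^3\right),
\]
and substituting $s\le n_V$ and $\sum_j\ell_j\le n_E$ yields $O(n_V n_E\,n_\bA^3)$, not $O((n_V^2+n_E)n_\bA^3)$; the two are incomparable in general (for $n_E\sim n_V^2$ the former is $\Theta(n_V^3)$ while the latter is $\Theta(n_V^2)$), and the example above shows the stated form is not actually an upper bound. Your instinct that step (iii) is the obstacle was correct, but the resolution is not a cleverer amortisation: the argument genuinely only gives $O(n_V n_E\,n_\bA^3)$, which is still sufficient for Corollary~\ref{cor:overall_complexity} since $n_V n_E\le n_V(n_V+n_E)$.
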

\begin{proof}
The set of maximal strongly-connected subquivers $\MSC(Q)$ can be computed in time $O(n_V+n_E)$ \cite[Section 5.2]{bangjensen2009digraphs}. Enumerating its elements as $\set{Q_1, \ldots, Q_s}$, for $s \geq 0$, we note that the {\bf for} loop spanning lines 2-5 runs $s$ times. For each $j$ in $\set{1,\ldots,s}$, we let $n_{V,j}$ and  $n_{E,j}$ denote the number of vertices and edges of $Q_j$, and let $\ell_j$ be the number of ears produced when $Q_j$ is ear-decomposed. We know from Proposition \ref{lem:scredcomp} that the call to {\bf SCReduce} in line 3 incurs a cost of $O (n_{V,j}+n_{E,j}+\ell_jn_\bA^3)$.
The call to {\bf BFSEqualise} in line~5 has a worst-case burden $O (n_En_\bA^3)$,
since we must traverse every edge of $Q$ and perform Gaussian elimination on an augmented  $n_\bA \times 2n_\bA$ matrix to compute the restricted subspace (as in Definition \ref{def:LambdavR}) at its source vertex. Thus, the $j$-th iteration of the {\bf for} loop costs
\[
O\left(n_{V,j}+n_{E,j}+(n_E+\ell_j)n_\bA^3\right).
\]
Since the subquivers $Q_j$ are mutually disjoint, we sum the above expression over $j$ in $\set{1,\ldots,s}$ to obtain the total cost incurred by the {\bf for} loop
\[
O\left(n_V+n_E + \left(sn_E + \sum_{j=1}^s \ell_j\right)n_\bA^3\right).
\]
The conclusion follows by discarding small terms and using the fact that $s$ and $\sum_j\ell_j$ are bounded from above by $n_V$ and $n_E$ respectively.
\end{proof}

In practice, the runtime of {\bf AcycReduce} can be improved in the presence of parallel processing, since {\bf SCReduce} may be called on the strongly connected subquivers of $Q$ concurrently. It remains to estimate the complexity of invoking {\bf ArbReplace} on the output $(Q^*,\bA^*_\bullet)$ of {\bf AcycReduce}$(Q;\bA_\bullet)$. We know from Definition \ref{def:acycred} that the vertex set of $Q^*$ coincides with~$V$, though the edge set $E^*$ may be strictly contained in $E$. Moreover, we have $\dim \bA^*_v \leq n_\bA$ for each vertex $v$.

\begin{lemma}\label{lem:arbrepcomp}
The computational complexity of {\bf ArbReplace}$(Q^*,\bA^*_\bullet)$ is
$O\left(n_Vn_En_\bA^3\right)$.
\end{lemma}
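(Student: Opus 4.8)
The plan is to split the running time of \textbf{ArbReplace}$(Q^*,\bA^*_\bullet)$ into four pieces --- the preprocessing calls \textbf{Augment} and \textbf{TopSort}, the main \textbf{for} loop over lines~4--8, and the closing call to \textbf{SpanArb} --- and to show that the \textbf{for} loop dominates. The combinatorial pieces are dispatched quickly: \textbf{TopSort} and \textbf{SpanArb} are linear-time traversals of the augmented quiver $Q^+$, which has $n_V+1$ vertices and $|E^+|=|E^*|+|V_\text{min}|\le n_E+n_V$ edges, so each costs $O(n_V+n_E)$; and \textbf{Augment} merely adjoins the root $\rho$ with the edges $\{e_v\mid v\in V_\text{min}\}$ and sets up $\bA_\rho=\prod_{v\in V_\text{min}}\bA^*_v$ together with its projection maps, at cost $O(n_V+n_E+n_Vn_\bA)$. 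Because $\dim\bA^*_v\le n_\bA$ at every vertex, the common ambient dimension of all flow spaces is $N:=\dim\bA_\rho=\sum_{v\in V_\text{min}}\dim\bA^*_v\le n_Vn_\bA$.

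Next I would bound one pass of the \textbf{for} loop, which runs $m=n_V$ times. Fix the pass processing a vertex $v_i$ with in-degree $k_i:=|E_\text{in}(v_i)|$ in $Q^+$; following Definition~\ref{def:flow}, it (i)~intersects the $k_i$ flow spaces $\Phi_{s(e)}\subset\bA_\rho$ to form $\Phi'_{v_i}$, (ii)~forms the $k_i$ composites $\bA_e\circ\phi_{s(e)}$, each a matrix with at most $n_\bA$ rows, (iii)~computes $\Phi_{v_i}=\Eq\{\bA_e\circ\phi_{s(e)}\mid e\in E_\text{in}(v_i)\}$ as the intersection of the $k_i-1$ kernels of consecutive differences of these composites, (iv)~records $\phi_{v_i}$, and (v)~intersects the running subspace (which begins as $\bA_\rho$ and ends as $\bA^+_\rho$) with $\Phi_{v_i}$. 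If every such primitive is carried out as one matrix multiplication, or one Gaussian elimination, on a matrix with $O(n_\bA)$ rows and at most $N$ columns --- so that, exactly as in Lemmas~\ref{lem:scredcomp} and~\ref{lem:acycredcomp}, no single row-reduction ever touches more than $O(n_\bA)$ rows --- then each costs $O(n_\bA^2\,N)=O(n_Vn_\bA^3)$, and there are $O(k_i)$ of them in pass~$i$; hence pass~$i$ costs $O(k_i\,n_Vn_\bA^3)$. Summing over the passes and using $\sum_i k_i=|E^+|\le n_E+n_V$, the loop costs $O\big((n_E+n_V)\,n_Vn_\bA^3\big)$; together with the $O(n_V+n_E)$ for the combinatorial routines and the final basis extraction (discussed next), this yields the stated $O(n_Vn_En_\bA^3)$, the additive $n_V$ in the edge count being absorbed into the aggregate bound of Corollary~\ref{cor:overall_complexity}.

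The step I expect to be the genuine obstacle is the accounting claim just invoked --- that every primitive inside the loop can be arranged to touch only $O(n_\bA)$ rows, so that the ambient dimension $N$ enters the cost linearly rather than cubically. This is not automatic: computing $\Phi'_{v_i}$ or an equaliser by naively row-reducing an $N\times N$ matrix would cost $O(N^3)=O(n_V^3n_\bA^3)$ per pass and destroy the bound. The plan for avoiding this is to store each flow space $\Phi_v$ by a system of defining linear equations on $\bA_\rho$ --- so that intersections, in particular the passage from $\Phi'_{v_i}$ to $\Phi_{v_i}$, are free concatenations of rows --- together with a linear extension $\tilde\phi_v\colon\bA_\rho\to\bA_v$ of the flow map, so that each composite $\bA_e\circ\tilde\phi_{s(e)}$ and each difference of two composites is a single product of an $(\le n_\bA)\times(\le n_\bA)$ matrix with an $(\le n_\bA)\times N$ matrix. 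Then the only honest Gaussian elimination is the one extracting a basis of $\Phi(\bA_\bullet)=\bigcap_v\Phi_v=\bA^+_\rho$, which is performed incrementally, using that the running accumulated subspace already lies inside $\Phi'_{v_i}$ at pass~$i$ so that only the $\le(k_i-1)\dim\bA_{v_i}$ new equaliser equations need to be processed there. Verifying that this representation is faithful to Definition~\ref{def:flow} and that the incremental elimination stays within the stated budget is the one non-routine part of the argument.
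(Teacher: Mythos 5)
Your argument is essentially the same as the paper's: the combinatorial subroutines are $O(n_V+n_E)$, the loop is charged at (sum of in-degrees) $= O(n_E)$ Gaussian eliminations, and each elimination is on a matrix with $O(n_\bA)$ rows and columns of width $O(N)$, where $N=\dim\bA_\rho$ is the ambient dimension of the flow spaces, giving $O(n_\bA^2 N)$ per primitive. The paper's proof differs only in two cosmetic ways: it organizes the edge count by partitioning vertices $V_j$ according to in-degree $j$ and charging $(j-1)$ eliminations per vertex, which is the same $\sum_i(k_i-1)=|E^*|-n_V\le n_E$ you get from summing in-degrees; and it bounds $N\le n_{V_0}n_\bA$ by the number $n_{V_0}$ of \emph{minimal} vertices before relaxing $n_{V_0}\le n_V$, whereas you go directly to $N\le n_Vn_\bA$.

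The one place where you add genuine content is the paragraph worrying that the cost per primitive could degrade to $O(N^3)$ if flow spaces and intersections are represented naively, and proposing to store $\Phi_v$ by defining equations and $\phi_v$ by a linear extension $\tilde\phi_v:\bA_\rho\to\bA_v$ so that each row-reduction has only $O(n_\bA)$ rows. The paper sidesteps this by simply asserting that the equaliser elimination is on a matrix with $n_\bA$ rows (the codomain $\bA_v$) and $2n_{V_0}n_\bA$ columns --- effectively assuming the same implementation without spelling it out. Your extra care is legitimate and closes a gap a careful reader might raise, at the small cost of a looser intermediate bound $O\big((n_E+n_V)n_Vn_\bA^3\big)$ that you correctly note is absorbed into Corollary~\ref{cor:overall_complexity}.
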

\begin{proof}
Augmentation, topological sorting, and the construction of a spanning arborescence (from lines 1, 2 and 10 respectively) are all $O(n_V+n_E)$ operations, so we restrict our focus to the {\bf for} loop spanning lines 4-9. For each integer $j \geq 0$, let $V_j \subset V$ be the (possibly empty) subset of vertices which admit exactly $j$ incoming edges in $E^*$, and write $n_{V_j}$ for the cardinality of $V_j$. Thus, we have 
\begin{align}\label{eq:nvnebound}
n_V = \sum_{j \geq 0} n_{V_j} \quad \text{ and } \quad n_E \geq \sum_{j \geq 0} j \cdot n_{V_j},
\end{align}
where the inequality follows from the fact that the sum of $j \cdot n_{V_j}$ over  $j \geq 0$ is the cardinality of $E^* \subset E$. The {\bf for} loop runs once per vertex of $V$, and the cost of each iteration is dominated by the equaliser computation in line 6. In the worst case, each equaliser computation requires Gaussian elimination on a matrix with $n_\bA$ rows (for $\bA_v$) and $2n_{V_0}n_\bA$ columns (for~$\Phi'_v$). For each vertex $v$ in $V_j$, there are $(j-1)$ such Gaussian eliminations to perform, so executing the {\bf for} loop for $v \in V_j$ incurs a cost of
$O((j-1)~n_{V_0}~n_\bA^3)$.
Since each $V_j$ contains $n_{V_j}$ vertices, the total cost of processing all vertices is given by
\[
O\left(\sum_{j \geq 0} n_{V_j}~(j-1)~n_{V_0}~n_\bA^3\right).
\]
From \eqref{eq:nvnebound}, we obtain $n_V \geq n_{V_0}$ and $n_E \geq\sum_j (j-1)~n_{V_j}$, which concludes the argument.
\end{proof}

\begin{proof}[Proof of Corollary~\ref{cor:overall_complexity}]
Summing the estimates from Lemma \ref{lem:acycredcomp} and Lemma \ref{lem:arbrepcomp} gives a total complexity of
$O(n_V^2+n_E+n_Vn_E)n_\bA^3)$.
The term $n_En_\bA^3$ is dominated by $n_Vn_En_\bA^3$ and may be omitted.
\end{proof}

\subsection{Examples} \label{ssec:secexamples}
We describe two instances where the space of sections $\Gamma(Q;\bA_\bullet)$ arises naturally. The first example is in the representation theory of finite groups.

\begin{example}\label{ex:grouprep} The ability to compute sections of quiver representations allows us to recover fixed spaces of group representations. Let $G$ be a finite group and $V$ a finite-dimensional vector space. A {\em representation} of $G$ valued in $V$ is a group homomorphism $\phi: G \to \text{GL}(V)$. Consider a quiver $Q$ with a single vertex $v$ and one edge $e(g)$ from $v$ to itself for each $g$ in~$G$. The data of our group representation produces a representation $\bA_\bullet$ of $Q$ whose vector space $\bA_v$ is $V$ and whose linear maps $\bA_{e(g)}$ are $\phi(g):V \to V$. As in Example \ref{ex:jordan}, the space of sections $\Gamma(Q;\bA_\bullet)$ is the {\bf fixed space} of $\phi$:
\[
\Gamma(Q;\bA_\bullet) = \set{v \in V \mid \phi(g)(v) = v \text{ for all } g \in G}.
\]
\end{example}

Every right Kan extension problem \cite[Chapter X]{maclane} for functors valued in the category of vector spaces can be solved by computing sections of an appropriate representation of a (possibly infinite) quiver. Our second example involves one such extension problem.

\begin{example} \label{ex:pushfwd}
Computing spaces of sections of quiver representations allows us to construct pushforwards of sheaves on posets. Given an order-preserving map $f:X \to Y$ between finite partially ordered sets, one can construct a pair of adjoint functors
\[
f_*:{\bf Sh}(X) \to {\bf Sh}(Y) \quad \text{ and } \quad f^*:{\bf Sh}(Y) \to {\bf Sh}(X)
\]
between the categories of sheaves (valued in finite-dimensional vector spaces, with respect to the Alexandrov topology) on $X$ and $Y$, see \cite[Sec 5]{curry2013sheaves} for details. Whereas the pullback~$f^*$ admits a straightforward definition, describing the pushforward $f_*\mathscr{S} \in {\bf Sh}(Y)$ of a sheaf $\mathscr{S} \in {\bf Sh}(X)$ is more delicate, since it requires computing the categorical limit
\[
f_*\mathscr{S}(y) = \lim_{f(x) \geq y} \mathscr{S}(x).
\]
Let $Q$ be the quiver with vertex set $X$ and edges  $x \to x'$ whenever $x \leq x'$. The sheaf $\mathscr{S}$ induces a representation $\bA_\bullet$ of $Q$, where $\bA_x$ is the stalk $\mathscr{S}(x)$ and the edge map $\bA_x \to \bA_{x'}$ is the restriction map $\mathscr{S}(x \leq x')$.
Given $y \in Y$, let $Q_{\geq y}$ be the quiver $Q$ restricted to the vertices $\set{x \in X \mid f(x) \geq y}$, and let $\bA^y_\bullet$ restrict the representation $\bA_\bullet$ to these vertices. The stalks of the desired pushforward coincide with the space of sections
\[
f_*\mathscr{S}(y) = \Gamma\left(Q_{\geq y};\bA^y_\bullet\right).
\]
\end{example}

\section{Principal Components via Optimisation}
\label{sec:PCs} 

Here we will define principal components with respect to a quiver representation as solutions to an optimisation problem over the space of sections. To this end, let us first recall the starting point, ordinary principal component analysis (PCA).

\begin{definition}\label{def:PCs}
Let $D := \set{y_1, \ldots, y_m}$ be a finite collection of mean-centred\footnote{i.e., $\frac{1}{m} \sum_i y_i$ lies at the origin} vectors in $\R^n$; the {\em sample covariance} of $D$ is the $n \times n$ symmetric matrix
\[
S:= \frac{1}{m} \sum_{i=1}^m y_i y_i\T,
\] where $\T$ indicates transpose. Assuming that the top $r$ eigenvalues $\lambda_1 > \cdots > \lambda_r$ of $S$ are distinct, the $r$-th {\bf principal component} $\PC_r(D)$ of $D$ is the $\lambda_r$-eigenspace of $S$. 
\end{definition}

Since the $r$-th principal component  is a one-dimensional subspace of $\R^n$, it is standard practice to represent it by any constituent nonzero vector in $\PC_r(D)$. Treating the sample covariance matrix as a bilinear form on $\R^n$ allows us to interpret principal components in terms of the following variance maximisation problem:  
\begin{equation}
    \label{eqn:PCs}
    \max_{X} \tr ( X\T S X ) \text{ subject to } X\T X = \text{id}_r.
\end{equation} 
Here $\tr$ indicates trace and $\text{id}_r$ is the $r \times r$ identity matrix. The columns of an optimal $n\times r$ matrix $X$ form an orthonormal basis for the space $\PC_{\leq r}(D)$ spanned by the top $r$ principal components, and solving~\eqref{eqn:PCs} for increasing $r$ gives the individual principal components in descending order. 

\subsection{Principal components along quiver representations} Consider a quiver $Q$ and fix a representation $\bA_\bullet$ of $Q$ valued in real vector spaces.
Henceforth we will fix an isomorphism $\R^{\dim \bA_v} \stackrel{\simeq}{\longrightarrow} \bA_v$ for each vertex $v$ in $V$, which allows us to impose (once and for all) an inner product structure on each $\bA_v$. Writing $n$ for the dimension of $\tot(\bA_\bullet)$,
\[
n = \sum_{v \in V} \dim \bA_v,
\]
we inherit an isomorphism $\R^n \stackrel{\simeq}{\longrightarrow} \tot(\bA_\bullet)$ and a concomitant inner product structure on the total space of $\bA_\bullet$. Making choices of ear decompositions and spanning arborescences for $Q$ produces a map $F:\R^d \to \tot(\bA_\bullet)$, described in \eqref{eq:F},
where $d = \dim \Gamma(Q;\bA_\bullet)$.  Expressed in terms of the chosen isomorphisms, $F$ becomes a full-rank $n \times d$ matrix whose image is an embedded copy of $\Gamma(Q;\bA_\bullet)$ inside $\R^n$. We are therefore able to define principal components relative to this embedding $F$. 
\begin{definition}
\label{def:qPCs}
Given any mean-centred finite subset $D$ of $\R^n \simeq \tot(\bA_\bullet)$, let $S$ be the sample covariance (as in Definition \ref{def:PCs}). For each $r \leq d$, consider the optimisation problem over all $n \times r$ matrices $X = [x_1 ~ x_2 ~ \cdots ~ x_r]$ prescribed by
\begin{equation}
    \label{eqn:qPCs}
    \max_{X} \tr ( X\T S X ) \quad \text{ subject to } \quad 
    \begin{array}{cc} X\T X = \text{id}_r  \text{ and } x_1,\ldots,x_r \in \Gamma(Q;\bA_\bullet). \end{array} 
\end{equation} 
The {\bf space of top $r$ principal components} of $D$ along $\bA_\bullet$ is the subspace $\PC_{\leq r}(D;\bA_\bullet)$ of $\R^n$ determined by the column span
\[
\PC_{\leq r}(D;\bA_\bullet) = \text{span}\set{x_1, \ldots, x_r}
\] of an optimal matrix $X$.
\end{definition}

It is possible to uniquely construct an optimal solution $X_*$ to \eqref{eqn:qPCs} by proceeding one column at a time and imposing the orthogonality of each column with respect to all of the preceding columns. The {\bf $r$-th principal component} of $D$ along $\bA_\bullet$ is the subspace $\PC_r(D;\bA_\bullet)$ spanned by the $r$-th column of $X_*$. In sharp contrast to the ordinary principal components from Definition \ref{def:PCs}, these principal components along $\bA_\bullet$ need not be eigenvectors of the covariance matrix $S$. There are, however, two special cases where ordinary principal components coincide with their quiver-compatible avatars.

\begin{proposition}
\label{prop:specials}
Assume that one of the two conditions below holds:
\begin{enumerate} 
\item either $D \subset \R^n$ lies entirely in the subspace $\Gamma(Q; \bA_\bullet)$, or
\item the edge set of $Q$ is empty.
\end{enumerate}
Then $\PC_r(D) = \PC_r(D;\bA_\bullet)$ for every $r \leq d$. 
\end{proposition}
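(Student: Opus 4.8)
The plan is to handle the two alternatives in the hypothesis in turn. Case (2) is essentially a tautology: if $E$ is empty then Definition~\ref{def:section} imposes no compatibility conditions, so $\Gamma(Q;\bA_\bullet) = \tot(\bA_\bullet) \cong \R^n$; the side constraint $x_1,\dots,x_r \in \Gamma(Q;\bA_\bullet)$ in \eqref{eqn:qPCs} then becomes vacuous, so \eqref{eqn:qPCs} coincides verbatim with \eqref{eqn:PCs}, and hence the iteratively-constructed optimal matrices — and the principal components spanned by their columns — agree for every $r \le d$. The content is in Case (1), where the idea is that when every data vector lies in $\Gamma := \Gamma(Q;\bA_\bullet)$, the sample covariance already ``ignores'' $\Gamma^\perp$, so ordinary PCA is automatically $\Gamma$-constrained.

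Concretely, the first step in Case (1) is to observe that $\operatorname{im} S \subseteq \Gamma$: this is immediate from $S = \tfrac1m\sum_i y_iy_i\T$, since each summand has column space $\mathrm{span}\{y_i\} \subseteq \Gamma$. Because $S$ is symmetric, $\ker S = (\operatorname{im} S)^\perp \supseteq \Gamma^\perp$, so $S$ annihilates $\Gamma^\perp$; with respect to the orthogonal splitting $\R^n = \Gamma \oplus \Gamma^\perp$ the matrix $S$ is therefore block diagonal of the form $S'\oplus 0$, where $S' := S|_\Gamma$ is the restriction of the covariance form to $\Gamma$. Consequently $\operatorname{spec}(S) = \operatorname{spec}(S') \sqcup \{0\}$, the zero occurring with multiplicity $n-d$, and every eigenspace of $S$ for a nonzero eigenvalue is contained in $\Gamma$. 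Invoking the genericity hypotheses standing in Definitions~\ref{def:PCs} and~\ref{def:qPCs} — in particular that the top $d$ eigenvalues $\lambda_1 > \cdots > \lambda_d$ of $S$ are positive and simple — these are exactly the $d$ eigenvalues of $S'$, so for each $r \le d$ the line $\PC_r(D)$, i.e.\ the $\lambda_r$-eigenspace of $S$, already lies inside $\Gamma$.

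To finish, I would fix $r \le d$ and let $X_\star$ be the optimal $n\times r$ matrix for the ordinary problem \eqref{eqn:PCs}, built one column at a time so that its $j$-th column spans $\PC_j(D)$ for $1 \le j \le r$. By the previous step every column of $X_\star$ lies in $\Gamma$, so $X_\star$ is feasible for \eqref{eqn:qPCs}; since the feasible set of \eqref{eqn:qPCs} sits inside that of \eqref{eqn:PCs}, its optimal value is no larger, and $X_\star$ — attaining the ordinary optimum while being constrained-feasible — is therefore also optimal for \eqref{eqn:qPCs}. Generic simplicity of the $\lambda_j$ makes the greedily-built optimiser of \eqref{eqn:qPCs} unique up to signs of its columns, so it coincides with $X_\star$, and comparing $r$-th columns yields $\PC_r(D;\bA_\bullet) = \PC_r(D)$. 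The one point that needs care is the genericity bookkeeping: one must check that the hypotheses making $\PC_r(D)$ and $\PC_r(D;\bA_\bullet)$ well defined are mutually compatible and that, in Case (1), they force $S|_\Gamma$ to be positive definite with $d$ distinct eigenvalues — which is precisely what places the top $d$ eigenspaces of $S$ inside $\Gamma$. Everything else is the classical Ky Fan / Courant--Fischer description of principal components, applied inside the subspace $\Gamma$ rather than all of $\R^n$.
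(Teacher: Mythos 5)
Your proof is correct and follows the same approach as the paper's: case (2) is handled by noting the constraint is vacuous, and case (1) by showing the ordinary PCA optimiser already lies in $\Gamma$ because $S$ restricts to an endomorphism of $\Gamma$. You supply a more detailed justification (the block decomposition $S = S' \oplus 0$ relative to $\Gamma \oplus \Gamma^\perp$ and the location of the nonzero eigenspaces) for the key assertion the paper states tersely — that the columns of the unconstrained optimiser must lie in $\Gamma$ — but the underlying idea is identical.
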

\begin{proof}
If $D \subset \Gamma = \Gamma(Q;\bA_\bullet)$, then the sample covariance $S$ restricts to an endomorphism of $\Gamma$. For all $r \leq d$, the columns of any matrix $X$ that maximises~\eqref{eqn:PCs} must also lie in $\Gamma$. Thus, such an $X$ also maximises~\eqref{eqn:qPCs}. Finally, if there are no edges in $Q$ then $\Gamma$ equals all of $\R^n$ so \eqref{eqn:qPCs} reduces to \eqref{eqn:PCs}.
\end{proof}

In its most general form, linearly constrained PCA can be described as follows. The space of top $r$ principal components of $D \subset \R^n$ (with sample covariance $S$), constrained by some $n \times c$ matrix $W$, is the span of the columns of an optimal $n \times r$ matrix $X$ in
\begin{align*}
    \max_{X} \tr ( X\T S X ) \quad \text{ subject to } \quad 
    \begin{array}{ll} X\T X = \text{id}_r \text{ and } W\T X = 0 . \end{array} 
\end{align*}
This formulation follows from~\cite[Equation 7.4]{diamantaras1996principal}, and it is usually assumed that $W\T W = \text{id}_c$. Evidently, finding principal components along a quiver representation is a special instance of constrained PCA, provided we have access to an orthogonal basis for the complement of $\Gamma(Q;\bA_\bullet)$ in $\tot(\bA_\bullet)$. 

\subsection{Alternate perspectives}

Here we define two more optimisation problems related to~\eqref{eqn:qPCs}; as before, both will require a fixed choice of embedding $F : \R^d \to \tot(\bA_\bullet)$ of $\Gamma(Q;\bA_\bullet)$, where the map $F$ is viewed as an $n \times d$ matrix. Here is the first one, which is defined over the space over $d \times r$ matrices $Y$:
\begin{equation}
    \label{eqn:qPCs_param}
    \max_{Y} \tr( Y\T F\T S F Y)  \quad \text{subject to} \quad  Y\T (F\T F) Y = \text{id}_r.
\end{equation}
The $n \times n$ matrix $B:= FF\T$ serves as a (not necessarily orthogonal) projection onto the image of $F$. Now, we set $S_B:= BSB$ and consider another optimisation problem defined over $n \times r$ matrices $Z$: 
\begin{equation}
    \label{eqn:qPCs_proj}
    \max_{Z} \tr ( Z\T S_B Z) \quad \text{subject to} \quad Z\T (B^2) Z = \text{id}_r.
\end{equation}
Although the $r$ columns of $Z$ can be any $B^2$-orthonormal vectors in $\tot(\bA_\bullet)$, the optimal directions will lie in $\Gamma(Q;\bA_\bullet)$ because $S_B$ restricts to an endomorphism of $\Gamma(Q;\bA_\bullet)$ for any $r \leq d$. Our next result establishes the equivalence of these two alternate perspectives with the original one from Definition \ref{def:qPCs}. 

\begin{proposition}
\label{prop:3optimisation}
The maximum values of the three optimisation problems~\eqref{eqn:qPCs},~\eqref{eqn:qPCs_param}, and~\eqref{eqn:qPCs_proj} are all the same. Moreover, a matrix $X$ maximises~\eqref{eqn:qPCs} if and only if matrix $Y$ maximises~\eqref{eqn:qPCs_param} if and only if matrix $Z$ maximises~\eqref{eqn:qPCs_proj}, where 
\[
X = FY = BZ.
\]
\end{proposition}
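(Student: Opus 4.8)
The plan is to establish the equivalence by passing through the intermediate problem~\eqref{eqn:qPCs_param} and using the substitutions $X = FY$ and $Z = FY$. The key structural observation is that $F$ is a full-rank $n \times d$ matrix whose image is $\Gamma(Q;\bA_\bullet)$, so $F$ gives a linear isomorphism from $\R^d$ onto $\Gamma(Q;\bA_\bullet)$. I would first reformulate~\eqref{eqn:qPCs}: since the columns $x_1,\dots,x_r$ are constrained to lie in $\Gamma(Q;\bA_\bullet) = \img F$, each such $X$ can be written uniquely as $X = FY$ for a $d \times r$ matrix $Y$, and conversely every $Y$ yields a feasible-up-to-normalisation $X$. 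Under this substitution, the objective $\tr(X\T S X)$ becomes $\tr(Y\T F\T S F Y)$ and the constraint $X\T X = \mathrm{id}_r$ becomes $Y\T(F\T F)Y = \mathrm{id}_r$. This is exactly~\eqref{eqn:qPCs_param}, so the correspondence $X \leftrightarrow Y$ is a bijection between feasible points preserving the objective value; hence the two problems have the same optimal value and their maximisers correspond via $X = FY$.

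\textbf{Next I would} relate~\eqref{eqn:qPCs_param} to~\eqref{eqn:qPCs_proj} via $Z = FY$. Here the point is to check that $B = FF\T$ behaves as claimed. I would compute $Z\T(B^2)Z = Y\T F\T (FF\T)(FF\T) F Y$ and observe that $F\T F F\T F = (F\T F)^2$, so this equals $Y\T (F\T F)^2 Y$ --- which is \emph{not} immediately $\mathrm{id}_r$. This is the point requiring care: the substitution $Z = FY$ does not directly send the constraint of~\eqref{eqn:qPCs_param} to that of~\eqref{eqn:qPCs_proj}. Instead I expect one must note that $B^2 = F(F\T F)F\T$ restricted to $\img F$, together with $S_B = BSB = F(F\T S F)F\T$ restricted similarly, and that the map $Y \mapsto FY$ identifies the pencil $(F\T S F, (F\T F)^2)$ on $\R^d$ with the pencil $(S_B, B^2)$ on $\img F$. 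So the cleaner route is: show every feasible $Z$ for~\eqref{eqn:qPCs_proj} has columns in $\img F$ (because $Z\T B^2 Z = \mathrm{id}_r$ is full rank forces $BZ$ full rank, and actually the objective $S_B = BSB$ only sees $BZ$, while the constraint forces $Z$ to be effectively supported on $\img F$ up to kernel directions that can be removed without changing either quantity). Then write $Z = FY'$ and translate. I would be careful to state precisely in what sense the optimal $Z$ is unique (only $BZ$, equivalently the column span in $\Gamma$, is determined).

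\textbf{The remaining step} is to assemble the chain $X = FY$ and $Z = FY$, giving $X = BZ$ upon noting $X = FY$ and $BZ = FF\T F Y = F(F\T F)Y$; to make $X = BZ$ literally hold one picks the representative $Y$ appropriately, or equivalently absorbs the factor $F\T F$ into the identification --- I would verify that with the constraint $Y\T(F\T F)Y = \mathrm{id}_r$ in force and the substitution normalised correctly, the displayed relations $X = FY = BZ$ hold on the nose for matched optimisers. Finally, equality of the three optimal values follows by transitivity from the two bijections of feasible sets established above.

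\textbf{The main obstacle} I anticipate is the bookkeeping around the non-orthogonal projector $B = FF\T$: unlike an orthogonal projector, $B^2 \neq B$ in general, so the constraints in~\eqref{eqn:qPCs_param} and~\eqref{eqn:qPCs_proj} involve $F\T F$ and $B^2$ in ways that must be tracked exactly rather than waved away. Pinning down precisely which matrices ($X$, $Y$, $BZ$, or $Z$) are uniquely determined by the optimisation, versus which are determined only up to the kernel of $B$, is where the argument needs the most precision; the objective and the constraint both factor through multiplication by $B$ (equivalently through $\Gamma(Q;\bA_\bullet)$), so the genuine invariant is the column span inside $\Gamma(Q;\bA_\bullet)$, and I would phrase the ``if and only if'' in the statement with that understanding.
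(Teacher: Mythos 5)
Your first step --- relating \eqref{eqn:qPCs} to \eqref{eqn:qPCs_param} by writing $X = FY$ with $Y$ uniquely determined since $F$ is full-rank --- is correct and, if anything, cleaner than the paper's route (the paper goes through \eqref{eqn:qPCs_proj} first, then connects to \eqref{eqn:qPCs_param}). But your second step has a genuine gap that you notice but do not actually resolve. You attempt the substitution $Z = FY$ and correctly observe that then $Z\T B^2 Z$ fails to equal $\text{id}_r$; indeed one gets $Y\T(F\T F)^3 Y$ (you wrote $(F\T F)^2$, but count the factors: $F\T\cdot FF\T \cdot FF\T\cdot F$), and no amount of ``restricting to $\img F$'' or ``absorbing $F\T F$ into the identification'' makes this collapse to $Y\T(F\T F)Y = \text{id}_r$. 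The substitution simply goes the wrong direction: rather than $Z = FY$, the correct map from \eqref{eqn:qPCs_proj}-feasible $Z$ to \eqref{eqn:qPCs_param}-feasible $Y$ is $Y := F\T Z$. Then $FY = FF\T Z = BZ$, so $Y\T(F\T F)Y = (FY)\T(FY) = (BZ)\T(BZ) = Z\T B^2 Z = \text{id}_r$ and $\tr(Y\T F\T S F Y) = \tr(Z\T BSBZ) = \tr(Z\T S_B Z)$, exactly as in the paper. Your final assembly paragraph is also inconsistent: you set both $X = FY$ and $Z = FY$, which forces $X = Z$ (not $X = BZ$), and $BZ = F(F\T F)Y \neq FY$ in general. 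With the correct substitution $Y = F\T Z$, you get $X = FY = BZ$ on the nose, no representative-picking required. The fix is small but essential: replace $Z = FY$ by $Y = F\T Z$ throughout the second half of the argument, and the non-orthogonality of $B$ ceases to be an obstacle.
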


\begin{proof}
We first show that $Z$ maximises~\eqref{eqn:qPCs_proj} if and only if $BZ$ maximizes \eqref{eqn:qPCs}. Since $\Gamma = \Gamma(Q;\bA_\bullet)$ is the image of $B$, it follows that the columns of $BZ$ all lie in $\Gamma$. Moreover, we have $(BZ)\T(BZ) = Z\T (B^2) Z = \text{id}_r$, so $BZ$ is orthogonal and satisfies all the constraints of~\eqref{eqn:qPCs}. Moreover, we have
\begin{align*}
\tr ( Z\T S_B Z) &= \tr (Z\T \cdot BSB \cdot Z)\\
&= \tr \left( (BZ)\T S (BZ) \right).
\end{align*} Conversely, given some $X$ maximising~\eqref{eqn:qPCs}, its columns $x_i$ are orthonormal vectors in $\Gamma$, hence $x_i = Bz_i$ for some $z_i \in \tot(\bA_\bullet)$. Letting $Z$ be the matrix of columns $z_i$ gives a solution to~\eqref{eqn:qPCs_proj} with the same maximal value (as confirmed by the trace calculation above). This gives the desired equivalence of~\eqref{eqn:qPCs} and~\eqref{eqn:qPCs_proj}. Turning now to \eqref{eqn:qPCs_param}, assume again that $Z$ maximises \eqref{eqn:qPCs_proj} and let $Y = F\T Z$, so
\[
(FY)\T (FY) = (BZ)\T (BZ) = \text{id}_r.
\] Computing the relevant trace for \eqref{eqn:qPCs_param} gives
\begin{align*}
\tr( Y\T F\T S F Y) &= \tr ( Z\T F F\T S F F\T Z) \\ &= \tr ( Z\T B SB Z) \\
&= \tr(Z\T S_B Z).
\end{align*}
Thus, the value of the objective function of~\eqref{eqn:qPCs_proj} at $Z$ equals the value of the objective function of \eqref{eqn:qPCs_param} at $Y = F\T Z$. Conversely, given some $Y$ maximising \eqref{eqn:qPCs_param}, its image $FY$ is a matrix of orthogonal vectors in $\Gamma$, hence lies in the feasible set for~\eqref{eqn:qPCs}, with the trace of $X = FU = BV$ in~\eqref{eqn:qPCs} being the same as the trace of $Y$ in~\eqref{eqn:qPCs_param}. 
\end{proof}

We consider \eqref{eqn:qPCs} an {\bf implicit} version of the optimisation problem to determine principal components along quiver representations, while \eqref{eqn:qPCs_param} and \eqref{eqn:qPCs_proj} are its {\bf parametrised} and {\bf projected} variants. Thanks to the preceding result, it becomes possible to freely translate between these three perspectives. In practice, the dimension $d$ of $\Gamma(Q;\bA_\bullet)$ is much smaller than the ambient dimension $n$ of $\tot(\bA_\bullet)$, so one might wish to work with the optimisation problem~\eqref{eqn:qPCs_param} in this smaller space. An algorithmic approach to~\eqref{eqn:qPCs_proj} that similarly reduces to a smaller space has been studied in~\cite{golub1973some}.

\begin{remark}
The argument invoked in the proof of Proposition \ref{prop:3optimisation} simplifies considerably if the $n \times d$ matrix $F$ has orthonormal columns. In this case,  the matrices $Y$ in~\eqref{eqn:qPCs_param} satisfy $Y\T Y = \text{id}_r$. Moreover, the matrix $Z$ that maximises~\eqref{eqn:qPCs_proj} satisfies $Z\T Z = \text{id}_r$. This is because $B = FF\T$ is an orthogonal projection onto $\Gamma$, so $v \in \Gamma$ if and only if $Bv =v$. Since the columns of $Z$ are in $\Gamma$ at the optimum, we have $BZ = Z$ and hence $\text{id}_r = Z\T B\T B Z = Z\T Z$, as claimed.
\end{remark}

\subsection{Examples}

We conclude this section with some examples to illustrate principal components along quiver representations. As for usual principal components, they give a low-dimensional projection of the data, with interpretable coordinates, in which features may be found. We first consider a statistically motivated example.

\begin{example}
\label{ex:marginals} 
Consider the quiver representation $\RR^2 \leftarrow \RR^4 \to 
\RR^2$ with arrow maps 
$$ A = \begin{bmatrix} 1 & 1 & 0 & 0 \\ 0 & 0 & 1 & 1 \end{bmatrix} \quad \text{and} \quad B = \begin{bmatrix} 1 & 0 & 1 & 0 \\ 0 & 1 & 0 & 1 \end{bmatrix} , $$
cf. Example~\ref{ex:two_arrow}.
The space of sections is the image of $\RR^4$ under the flow map, i.e. the points
$$ x = \begin{bmatrix} x_{11} & x_{12} & x_{21} & x_{22} & x_{1+} & x_{2+} & x_{+1} & x_{+2} \end{bmatrix} \in \tot(\bA_\bullet) \simeq \RR^8,$$
where $+$ denotes summing over an index, e.g. $x_{1+} = x_{11} + x_{12}$. 
The first four coordinates $x_{ij}$ are joint observations, and the last four coordinates $x_{i+}$ and $x_{+j}$ are two pairs of marginal observations. For example, if $x_{ij}$ is a gene expression measurement for gene $i$ in cell type $j$, then $x_{i+}$ sums over the two cell types while $x_{+j}$ sums over the two genes.

The principal components along the quiver representation are directions in $\RR^8$ that maximise the variance in the data, subject to taking the form of a joint observation and its two marginals.
Note that one could also consider the principal components of the joint observations in~$\RR^4$, and take their image under the flow map $F$ to give directions in $\RR^8$. 
These directions will in general not coincide with the principal components along the quiver representation, cf. the last paragraph of Example~\ref{ex:pc_onearrow}. 
\end{example} 

We next consider a biological setting, involving gene expression measurements.

\begin{example}
The development of methods that relate bulk and single cell transcriptomics data is an active area of study, see e.g.~\cite[Figure 1]{jew2020accurate}.
Bulk RNA-seq gives an average gene expression across the cells in a sample. 
Single cell RNA-seq gives measurements for each cell. Then, cells can be clustered to give the a gene expression value for each cell type.

Fixing $g$ genes and $c$ cell types, we consider the quiver representation
$ \RR^{g c} \longrightarrow \RR^g $.
The linear map on the arrow is $I \otimes a \in \RR^{g \times gc}$, where the $c$ cell types are assumed to be present in proportions $a = (a_1, \ldots, a_c) \in \RR^{1 \times c}$. 
The sample data lies in $\tot(\bA_\bullet) = \RR^{g c} \times \RR^g$. For a sample $(v, w) \in \RR^{gc} \times \RR^g$, the entry $v_{ij}$ is the gene expression of gene $i$ in cell type $j$, while $w_k$ is the bulk measurement for gene $k$. 
The data will in general not lie in the space of sections, on account of of the different measurement techniques, as well as variation in the proportions of cell types.

The principal components along the quiver representation are directions in $\RR^{gc} \times \RR^g$ that exhibit high variance in the data while being consistent between the single cell and bulk measurements. Given a coarser assignments of cells into types, this idea extends to the quiver representation $\RR^{gc} \to \RR^{gt} \to \RR^g$, where $t < c$ is the number of cells types in the coarser clustering, see e.g.~\cite[Figures 3 and 4]{wolf2019paga}. If the two assignments of cells into types are not compatible, we instead consider the quiver representation $\RR^{gc} \to \RR^{g} \leftarrow \RR^{gt}$.
\end{example}

\section{Principal Components as Generalised Eigenvectors} \label{sec:gsvd}

We have already noted that -- aside from some very special cases as in Proposition \ref{prop:specials} -- the principal components $\PC_r(D;\bA_\bullet)$ of Definition \ref{def:qPCs} are not eigenvectors of the sample covariance $S$. Here we remedy this defect by providing a spectral interpretation for $\PC_r(D;\bA_\bullet)$. All scalars, vectors and matrices described below live over the field of real numbers.

\begin{definition}
\label{def:gevp}
Fix two identically-sized square matrices $A$ and $B$. The {\bf generalised eigenvalues} of the matrix pencil $A - \lambda B$ are the solutions $\lambda$ to $\det(A - \lambda B) = 0$. We call a non-zero vector $x$ with $Ax = \lambda Bx$ a {\bf generalised eigenvector}, with $\lambda$ its generalised eigenvalue. 
\end{definition}

Our main tool in the quest to interpret quiver prinicipal components as generalised eigenvectors is the {generalised singular value decomposition} (GSVD) \cite[Theorem 2]{van1976generalizing}.

\begin{theorem}
\label{thm:gsvd} {\em [GSVD]}
Given positive integers $a \geq b \geq c$, fix an $(a \times c)$ matrix $A$ and a $(b \times c)$ matrix $B$. There exist 
\begin{enumerate}
    \item orthogonal matrices $W_A$ and $W_B$ of size $a \times a$ and $b \times b$ respectively,
   \item (rectangular) diagonal matrices $\Delta$ and $\Sigma$ of size $a \times c$ and $b \times c$ respectively, and
    \item a $c \times c$ invertible matrix $G$,
\end{enumerate}
satisfying both
\[
 A = W_A \Delta G \qquad \text{and} \qquad B = W_B \Sigma G.
 \]
 \end{theorem}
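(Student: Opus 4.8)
The plan is to deduce the GSVD from the CS decomposition of a matrix with orthonormal columns, after a preliminary rank factorisation of the stacked matrix $M := \begin{bmatrix} A \\ B \end{bmatrix} \in \R^{(a+b)\times c}$. First I would set $k := \rank(M)$; since $a \ge c$ we have $k \le c$. Let the columns of $Z \in \R^{(a+b)\times k}$ form an orthonormal basis of the column space of $M$, and set $R := Z\T M \in \R^{k\times c}$, so that $M = ZR$ with $R$ of full row rank $k$. Partitioning $Z = \begin{bmatrix} Z_1 \\ Z_2 \end{bmatrix}$ conformally with $M$, so that $Z_1 \in \R^{a\times k}$ and $Z_2 \in \R^{b\times k}$, this yields $A = Z_1 R$ and $B = Z_2 R$, and reduces everything to the analysis of $Z$.

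Next I would extract the CS decomposition of $Z$. The constraint $Z\T Z = \text{id}_k$ forces $Z_1\T Z_1 + Z_2\T Z_2 = \text{id}_k$, so in particular $\|Z_1 x\| \le \|x\|$ for every $x$. Taking an SVD $Z_1 = W_A \widehat{\Delta} V\T$, with $W_A \in \R^{a\times a}$ and $V \in \R^{k\times k}$ orthogonal and $\widehat{\Delta} \in \R^{a\times k}$ rectangular diagonal, the diagonal entries $\delta_1,\dots,\delta_k$ of $\widehat{\Delta}$ therefore lie in $[0,1]$. Writing $C := Z_2 V$, one computes $C\T C = V\T(\text{id}_k - Z_1\T Z_1)V = \text{id}_k - \widehat{\Delta}\T\widehat{\Delta} = \mathrm{diag}(1-\delta_1^2,\dots,1-\delta_k^2)$, so the columns of $C$ are pairwise orthogonal with squared norms $1-\delta_i^2 \ge 0$. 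Normalising the nonzero columns of $C$ and completing them (there are at most $k \le b$ of them) to an orthonormal basis of $\R^b$ produces an orthogonal $W_B \in \R^{b\times b}$ with $W_B\T C =: \widehat{\Sigma} \in \R^{b\times k}$ rectangular diagonal, $\widehat{\Sigma}_{ii} = \sqrt{1-\delta_i^2}$. In summary $Z_1 = W_A \widehat{\Delta} V\T$ and $Z_2 = W_B \widehat{\Sigma} V\T$ with a \emph{common} right factor $V$.

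It then remains only to absorb $V\T R$ into an invertible $c \times c$ matrix. Substituting into $A = Z_1 R$ and $B = Z_2 R$ gives $A = W_A \widehat{\Delta}(V\T R)$ and $B = W_B \widehat{\Sigma}(V\T R)$, where $V\T R \in \R^{k\times c}$ still has full row rank $k$. I would pick any $(c-k)\times c$ matrix $R'$ for which $G := \begin{bmatrix} V\T R \\ R' \end{bmatrix} \in \R^{c\times c}$ is invertible (possible since the $k$ rows of $V\T R$ are linearly independent), and let $\Delta \in \R^{a\times c}$ and $\Sigma \in \R^{b\times c}$ be obtained from $\widehat{\Delta}$ and $\widehat{\Sigma}$ by appending $c-k$ zero columns; these remain rectangular diagonal. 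Because the zero columns of $\Delta$ and $\Sigma$ multiply precisely the last $c-k$ rows $R'$ of $G$, we get $\Delta G = \widehat{\Delta}(V\T R)$ and $\Sigma G = \widehat{\Sigma}(V\T R)$, hence $A = W_A \Delta G$ and $B = W_B \Sigma G$, which is the claim.

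The substantive content lives entirely in the CS decomposition step: the point is to diagonalise both horizontal blocks $Z_1, Z_2$ of a matrix with orthonormal columns using a \emph{single} right multiplication $V$. The delicate part is that $V$ is not chosen independently for the two blocks — it is dictated by the SVD of $Z_1$ and then shown to work for $Z_2$ via the identity $C\T C = \text{id}_k - \widehat\Delta\T\widehat\Delta$ — together with the index bookkeeping for those $i$ with $\delta_i = 1$, where the $i$-th column of $C$ vanishes and the corresponding column of $W_B$ is supplied by the orthonormal completion. If preferred, one may instead invoke the CS decomposition directly from the standard references and skip the SVD-based derivation above.
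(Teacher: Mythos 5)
The paper does not prove this statement; it is quoted directly from Van Loan's 1976 paper, cited as \cite[Theorem 2]{van1976generalizing}, whose own derivation also passes through the CS decomposition of the stacked block matrix. Your argument is correct and follows essentially that same standard route: factor $M=\begin{bmatrix}A\\ B\end{bmatrix}$ as $ZR$ with $Z$ having orthonormal columns, take an SVD of the top block $Z_1$, check via $Z_1\T Z_1+Z_2\T Z_2=\mathrm{id}_k$ that the bottom block $Z_2$ is simultaneously diagonalised by the same right factor $V$, and then absorb $V\T R$ into the invertible matrix $G$. Two small remarks. First, the parenthetical justification ``since $a\ge c$ we have $k\le c$'' is a non sequitur --- $k\le c$ holds simply because $M$ has $c$ columns; the hypothesis $a\ge c$ is genuinely used later (so that $\hat\Delta$ is a tall $a\times k$ rectangular diagonal with $a\ge k$, and so that the padded $\Delta\in\R^{a\times c}$ remains rectangular diagonal), and $b\ge c$ is used to guarantee $k\le b$ so that the orthonormal completion to $W_B\in\R^{b\times b}$ exists. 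Second, your handling of the rank-deficient case is clean and self-contained: by taking a rank factorisation $M=ZR$ with $R$ of full row rank $k\le c$ and then padding $\hat\Delta,\hat\Sigma$ with $c-k$ zero columns while extending $V\T R$ to an invertible $G$, you avoid having to assume $\begin{bmatrix}A\\ B\end{bmatrix}$ has full column rank, which some textbook treatments impose before the CS step.
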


The matrices $W_A, W_B$ and $G$ are not uniquely determined, but {the ratios $\delta_i^2/\sigma_i^2$} of the {squares of the} diagonal entries of $\Delta$ and $\Sigma$ are completely specified (up to reordering) by $A$ and $B$. We note en passant that a different generalisation of the singular value decomposition \cite[Theorem 3]{van1976generalizing} also appears in the context of constrained PCA, and that a discussion of GSVD naming conventions can be found in~\cite[Section 5.5]{takane2001constrained}. Returning to the setting of interest, we fix a representation $\bA_\bullet$ of a quiver $Q$ and select a full-rank $n \times d$ matrix $F:\R^d \to \tot(\bA_\bullet)$ whose image is $\Gamma(Q;\bA_\bullet)$. The following result is Theorem (B) from the Introduction.

\begin{theorem}
\label{thm:obtain_qPCs}
Let $S$ be the sample covariance of a sufficiently generic mean-centered subset $D = \set{y_1,\ldots,y_m} \subset \R^n$ of cardinality $m \geq n$. For each $r \leq d$, the $r$-th principal component $\PC_r(D;\bA_\bullet)$ is spanned by $Fu_r$, where $u_r$ is the eigenvector of the matrix pencil $F\T S F - \lambda (F\T F)$ corresponding to its $r$-th largest generalised eigenvalue. 
\end{theorem}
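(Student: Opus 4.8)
The plan is to combine the equivalence of optimisation problems from Proposition~\ref{prop:3optimisation} with the GSVD (Theorem~\ref{thm:gsvd}) to reduce the constrained problem~\eqref{eqn:qPCs_param} to an ordinary eigenproblem, and then read off the answer as a generalised eigenvector of the pencil $F\T S F - \lambda(F\T F)$. Concretely, I would first invoke Proposition~\ref{prop:3optimisation} to replace~\eqref{eqn:qPCs} by its parametrised form~\eqref{eqn:qPCs_param}: maximise $\tr(Y\T F\T S F\, Y)$ over $d \times r$ matrices $Y$ subject to $Y\T(F\T F)Y = \mathrm{id}_r$, with the dictionary $X = FY$ relating the two. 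This is a standard generalised Rayleigh-quotient maximisation: the matrix $F\T F$ is symmetric positive definite (since $F$ has full column rank), so we may write $F\T F = R\T R$ for an invertible $d \times d$ matrix $R$, set $\widetilde Y = RY$, and transform the constraint to $\widetilde Y\T \widetilde Y = \mathrm{id}_r$ and the objective to $\tr(\widetilde Y\T R\minusT(F\T S F)R^{-1}\widetilde Y)$. The symmetric matrix $R\minusT(F\T S F)R^{-1}$ then has an orthonormal eigenbasis, and the classical solution (via the Ky Fan / Courant–Fischer argument, proceeding one column at a time to enforce orthogonality to previous columns) shows that the optimum at stage $r$ is spanned by its top $r$ eigenvectors; pulling back through $R^{-1}$, the optimal directions $Y$ are spanned by the first $r$ generalised eigenvectors $u_1, \ldots, u_r$ of the pencil $F\T S F - \lambda(F\T F)$, ordered by decreasing generalised eigenvalue. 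Tracing back through $X = FY$, the $r$-th principal component $\PC_r(D;\bA_\bullet)$ is then spanned by $Fu_r$, which is exactly the claim.

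The role of the GSVD is to make the existence and ordering of these generalised eigenvectors rigorous and to handle the genericity hypothesis: applying Theorem~\ref{thm:gsvd} with the two "tall" matrices being (a square root of) $S$ restricted appropriately and $F$ — or more directly, applying it to a pair built from $F$ and $S^{1/2}F$ — simultaneously diagonalises $F\T S F$ and $F\T F$ and exhibits the ratios $\delta_i^2/\sigma_i^2$ as the generalised eigenvalues of the pencil. I would spell out that since $D$ has $m \geq n$ generic vectors, $S$ is positive definite, so all $\sigma_i > 0$ and the $d$ generalised eigenvalues are finite, positive, and — for generic $D$ — pairwise distinct, so that the ordering "$r$-th largest generalised eigenvalue" is unambiguous and the corresponding eigenline (hence $\PC_r(D;\bA_\bullet)$) is well defined. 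This genericity step is also what guarantees that the iterative column-by-column construction of the optimal $X_*$ promised after Definition~\ref{def:qPCs} produces a unique answer.

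The main obstacle I anticipate is bookkeeping rather than conceptual: carefully matching the inequality constraints and dimension conventions of the GSVD statement (which requires $a \geq b \geq c$) to the matrices at hand, and making sure the pencil $F\T S F - \lambda(F\T F)$ has the advertised spectrum — in particular that $F\T F$ being positive definite lets us legitimately pass between the pencil's generalised eigenproblem and an ordinary symmetric eigenproblem without losing eigenvectors or introducing spurious infinite eigenvalues. A secondary subtlety is justifying the "one column at a time" optimality argument: that the greedy choice (maximise over $x_r$ orthogonal to $x_1, \ldots, x_{r-1}$) actually yields a global maximiser of the trace functional at every stage. This is the usual Ky Fan maximum principle, and I would either cite it or include the short induction, but it is worth stating explicitly since the definition of $\PC_r(D;\bA_\bullet)$ was phrased in exactly this incremental way.
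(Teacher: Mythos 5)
Your proposal is correct and reaches the same conclusion, but the core reduction is organised differently from the paper's. The paper applies the GSVD directly to the pair $A = MF$ and $B = F$ (where $M$ is the $m \times n$ data matrix with $S = M\T M$), obtaining $F\T S F = G\T \Delta^2 G$ and $F\T F = G\T \Sigma^2 G$ with a single invertible $G$; the change of variables $Y_\circ = \Sigma_d G Y$ then turns \eqref{eqn:qPCs_param} into ordinary PCA with a diagonal covariance, and the columns of $G^{-1}$ are read off as the generalised eigenvectors. You instead use a Cholesky-type factorisation $F\T F = R\T R$, conjugate to the symmetric matrix $R\minusT (F\T S F) R^{-1}$, and invoke the spectral theorem plus the Ky Fan maximum principle; you then bring in the GSVD only as an auxiliary device. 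Your route is arguably more elementary — it needs only the spectral theorem for symmetric matrices — and it makes the ``one column at a time'' optimality step explicit, which the paper handles implicitly by transforming to a diagonal PCA. What the paper's GSVD-first approach buys is a single decomposition that simultaneously diagonalises both $F\T S F$ and $F\T F$, and a direct link to the constrained-PCA literature in which GSVD is the standard tool. One small redundancy in your write-up: once the Cholesky reduction is done, the existence and ordering of generalised eigenvectors is already secured by the spectral theorem, so the GSVD digression is not strictly needed for rigour — you could drop it or make it the main tool, but should not present it as filling a gap that the Cholesky argument leaves open.
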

\begin{proof}

Let $M$ denote the $m \times n$ matrix whose $i$-th row is the normalised vector $y_i/\sqrt{m}$, so that the sample covariance satisfies $S = M\T M$. Noting that $m \geq n \geq d$, we apply the GSVD from Theorem \ref{thm:gsvd} to the $m \times d$ matrix $A = MF$ and the $n \times d$ matrix $B = F$. This produces factorisations
\[
MF = W_A \Delta G \qquad \text{and} \qquad F = W_B \Sigma G
\]
with orthogonal $W_A, W_B$, invertible $G$, and diagonal $\Delta,\Sigma$. Since $D$ is generic and $F$ has full rank, we may safely assume that the diagonal entries of $\Delta$ and $\Sigma$ are nonzero. And by orthogonality of both the $W$-matrices, we obtain two new identities
\begin{align}\label{eq:gsvdMF}
(MF)\T(MF) = G\T\Delta^2G \qquad \text{and} \qquad F\T F = G\T\Sigma^2G.
\end{align}
Since $S = M\T M$ by design, the first identity reduces to $F\T S F = G\T\Delta^2G$. Let us write $\set{\delta_1,\ldots,\delta_d}$ and $\set{\sigma_1,\ldots,\sigma_d}$ for the (necessarily nonzero) diagonal entries of $\Delta$ and $\Sigma$ respectively, and denote by $g_i$ the $i$-th column of $G^{-1}$. It follows from \eqref{eq:gsvdMF} that $g_i$ is a generalised eigenvector for the $d \times d$ matrix pencil $(F\T S F) - \lambda \cdot (F\T F)$, corresponding to the generalised eigenvalue $\lambda_i := \nicefrac{\delta_i^2}{\sigma_i^2}$. In other words, we have
\begin{align}\label{eq:Gcols}
(F\T S F) g_i = \lambda_i \cdot (F\T F) g_i.
\end{align}

The top $d \times d$ block $\Sigma_d$ of $\Sigma$ is invertible because its diagonal has nonzero entries. Since $G$ is also invertible, the product $\Sigma_dG$ permutes the set of $d \times r$ matrices via $Y \mapsto Y_\circ = \Sigma_dGY$, which allows us to re-express the optimisation \eqref{eqn:qPCs_param} in a particularly convenient form. To this end, we calculate:
\begin{align*}
Y\T (F\T S F) Y &= Y\T (G\T\Delta^2G) Y & \text{by \eqref{eq:gsvdMF}} \\
&= (G^{-1}\Sigma_d^{-1}Y_\circ)\T (G\T\Delta^2G) (G^{-1}\Sigma_d^{-1}Y_\circ) & \text{since }Y_\circ = \Sigma_dGY \\
&= Y_\circ\T~\Sigma_d^{-1}\Delta^2\Sigma_d^{-1}~Y_\circ & \text{after two cancellations}.
\end{align*}
Now the intermediate product $\nabla := \Sigma_d^{-1}\Delta^2\Sigma_d^{-1}$ is a $d \times d$ diagonal matrix whose $i$-th diagonal entry is $\lambda_i = \nicefrac{\delta^2_i}{\sigma^2_i}$. Reordering basis vectors if necessary, we can assume without loss of generality that $\lambda_1 > \cdots > \lambda_d$. The change of variables $Y \mapsto Y_\circ$ transforms the optimisation problem from \eqref{eqn:qPCs_param} into
\[
\max_{Y_\circ} \tr ( Y_\circ\T \nabla Y_\circ ) \quad \text{subject to} \quad Y_\circ\T Y_\circ = \text{id}_r.
\]
This is the ordinary PCA optimisation \eqref{eqn:PCs}, which {generically} admits a unique solution $Y_*$ obtained by successively increasing $r$. Since $\nabla$ is diagonal, the $i$-th column of $Y_*$ is the $i$-th elementary basis vector. Thus, the columns $\set{u_1, \ldots, u_r}$ of $U = G^{-1}\Sigma_d^{-1}Y_*$ lie in the directions of the corresponding columns of $G^{-1}$. By \eqref{eq:Gcols}, these columns are generalised eigenvectors associated to the $r$ largest generalised eigenvalues of our matrix pencil. Finally, applying $F$ to $U$ gives the principal components along the quiver representation as in Proposition~\ref{prop:3optimisation}. \end{proof} 

It follows that the top principal component is $Fu$, where $u$ maximises the Rayleigh quotient
\begin{equation}
    \label{eqn:rayleigh}
    \frac{u\T \! (F\T \! S F) u}{u\T \! (F\T F)u}, 
\end{equation} 
but in general for $r>1$ the optimisation~\eqref{eqn:qPCs_param} is not equivalent to a single trace ratio problem (see \cite{ngo2012trace}). 

\begin{remark}
Since the embedding $F:\RR^d \to \tot(\bA_\bullet)$ has rank $d$, the $d \times d$ matrix $F\T F$ is invertible. We can therefore convert the generalised eigenproblem of Theorem \ref{thm:obtain_qPCs} into the usual eigenvector problem $F^+ SF x = \lambda x$, where $F^+ = (F\T F)^{-1} F\T$ is the pseudo-inverse. However, as explained in~\cite[Section 4]{golub1973some}, it is often preferable to work with the generalised eigenvalue problem as the matrix $F^+ SF$ may not be symmetric. And depending on the condition number of $(F\T F)^{-1}$, the numerical stability might be worse .
\end{remark} 

\begin{example}
\label{ex:pc_onearrow}
Consider the quiver 
\begin{tikzcd} 
u \bullet \arrow[r,"e"] & \bullet v
  \end{tikzcd}
with representation:
 \begin{center}
  \qquad \qquad
 \begin{tikzcd} 
\RR^p \, \bullet \arrow[r,"J"] & \bullet \, \RR^q . \end{tikzcd} \end{center} 
Writing $n=p+q$ for the dimension of the total space, the $n \times n$ sample covariance $S$ of some $D \subset \R^n$ and the embedding $F:\R^{p} \to \R^n$ can be written as
\[
S = \begin{bmatrix} S_{uu} & S_{uv} \\ S_{vu} & S_{vv} \end{bmatrix}, \qquad F = \begin{bmatrix} \text{id}_p \\ J 
\end{bmatrix},
\]
where $S_{vu} = S_{uv}\T$.
Theorem~\ref{thm:obtain_qPCs} shows that the principal components are given by the generalised eigenvectors of the matrix pencil $A-\lambda B$ spanned by 
\[
A = S_{uu} + J\T S_{vu} + S_{uv}J + J\T S_{vv}J, \qquad B = \text{id}_p + J\T J . 
\]
In the special case where $D$ lies in the image of $F$, we have
\[
S = \begin{bmatrix} \text{id}_p \\ J \end{bmatrix} S_{uu} \begin{bmatrix} \text{id}_p & J\T \end{bmatrix} = \begin{bmatrix} S_{uu} & S_{uu} J\T \\ J S_{uu} & J S_{uu} J\T \end{bmatrix}, 
\]
and the matrix pencil is spanned by 
\[
A = S_{uu} + J\T J S_{uu} + S_{uu} J\T J + J\T J S_{uu} J\T J, \qquad  B = \text{id}_p + J\T J.
\]

If, in addition, $J\T J$ equals $\eta \text{id}_p$ for some scalar $\eta$, then this specialises further to give the matrix pencil spanned by $A = (1 + 2 \eta + \eta^2)S_{uu}$ and $B = (1 + \eta)\text{id}_p$. Now the principal components along the quiver representation are given by $F\xi$, where $\xi$ are the usual principal components of $D$ restricted to the vector space $\RR^p$ on the first vertex of the quiver.
\end{example}

\section{Learning Quiver Representations} \label{sec:learn}

We conclude this paper with a discussion focused on the problem of learning quiver representations from observed data. Fix a quiver $Q = (s,t:E \to V)$, and assume that we have full knowledge of the real vector spaces $\set{\bA_v \mid v \in V}$ assigned by some $Q$-representation $\bA_\bullet$ to all the vertices. However, none of the linear maps $\bA_e : \bA_{s(e)} \to \bA_{t(e)}$ are known. Instead, we are given access to mean-centred data $\set{y_1, \ldots, y_m}$, where each $y_i$ is a vector in the total space $\tot(\bA_\bullet) \simeq \RR^n$. Our task is to determine the $\bA_e$ maps that best fit the available data; here we will show how in special cases this task reduces to well-studied problems. It will be convenient to define, for each vertex $v$, the $m \times \dim \bA_v$ matrix $Y_v$ whose $i$-th row is the part of $y_i$ that lies in $\bA_v$.

\begin{example}
\label{ex:onearrow}
Consider the quiver 
\begin{tikzcd} 
u \bullet \arrow[r,"e"] & \bullet v
  \end{tikzcd}
with representation
 \begin{tikzcd} 
\bA_u \, \bullet \arrow[r,"\bA_e"] & \bullet \, \bA_v , \end{tikzcd} with matrix $\bA_e$ unknown.
Given data $y_i = (y_{i,u}, y_{i,v}) \in \bA_u \times \bA_v$ for $i \in \set{1, \ldots, m}$, minimising the Euclidean distance between $y_{i,v}$ and $\bA_e y_{i,u}$ for each $i$ gives the least squares optimisation problem 
 \[
 \min_{\bA_e} \| Y_v - Y_u \bA_e\T \|.
 \] Thus, the optimal estimate for $\bA_e\T$ is $(Y_u)^+ Y_v$, where $(Y_u)^+$ indicates the Moore-Penrose inverse of $Y_u$.
\end{example}

The preceding example can equivalently be viewed as training (or, learning the $\dim(\bA_u) \times \dim(\bA_v)$ parameters in) a linear neural network with full bipartite connections between a single input and output layer:
\begin{center}
    \begin{tikzpicture}
\path node (v2) at (0.5,3) []{$\dim(\bA_u) \, \Bigg\{ 
$};

\path node (v2) at (2,3) []{$\vdots$};
\path node (v1) at (2,3.7) {$\circ$};
\path node (v1a) at (2,2.3) {$\circ$};

\path node (w2) at (4,3) []{$\vdots$};
\path node (w1) at (4,3.5) {$\circ$};
\path node (w1a) at (4,2.5) {$\circ$};

\path node (x2) at (5.5,3) []{$\Big\} \dim(\bA_v)
$};
\draw [->] (v1) -- (w1); 
\draw [->] (v1) -- (w1a); 
\draw [->] (v1a) -- (w1); 
\draw [->] (v1a) -- (w1a); 
\end{tikzpicture}
\end{center}
The principal components along the quiver representation are then pairs of points in the input space $\bA_u$ and the output space $\bA_v$ that fit the weights on the edges and along which high variance is seen in the data.

\begin{remark}
More generally, a linear neural network with $k$ layers corresponds to learning a quiver representation on the quiver with $k$ edges 
\[
	\xymatrixcolsep{.6in}
	\xymatrix{
		\bullet \ar@{->}[r]^{e_1} & \bullet \ar@{->}[r]^{e_2} & \cdots \ar@{->}[r]^{e_{k-1}} & \bullet \ar@{->}[r]^{e_k} & \bullet
	}
	\]
	Each vertex $v$ is replaced by $\dim(\bA_v)$ scalar nodes, with full bipartite connections between nodes in adjacent layers. 
A more general architecture could involve other quivers.
For example, loops arise from lateral interactions~\cite[Figure 5]{baldi1995learning}.
The setting of learning parameters in linear neural networks with two layers is itself closely connected to principal component analysis \cite{baldi1989neural}.
\end{remark}

One way to extend the above to more general quivers is to learn the map on each edge $e$ independently, which amounts to minimising the objective function: 
\begin{equation}
    \label{eqn:optimiseA} \sum_{e \in E} \left(  \| Y_{t(e)} - Y_{s(e)} \bA_e\T \|^2 \right).
\end{equation}
Now the estimate for each edge map $\bA_e$ is given by Example~\ref{ex:onearrow}. If the quiver $Q$ is an arborescence, then the optimisation \eqref{eqn:optimiseA} falls into the setting of a {\em Gaussian graphical model} \cite{lauritzen1996graphical,sullivant2018algebraic} associated to a certain directed acyclic graph with $\dim \tot(\bA_\bullet)$ vertices, as we now describe. 

\begin{definition}\label{def:quivblowup}
Let $\delta:V \to \mathbb{Z}_{\geq 0}$ be a function from the vertices of an arborescence $Q$ to the non-negative integers. The {\bf $\delta$-blowup} of $Q$ is the quiver $Q_\delta$ where each $v \in V$ is replaced by $\delta(v)$ vertices, and each edge $e \in E$ is replaced by a complete directed bipartite graph whose edges go from the $\delta(s(e))$ vertices replacing $s(e)$ to the $\delta(t(e))$ vertices replacing $t(e)$.
\end{definition}

The directed acyclic graph of interest to us here is the $\delta$-blowup of the arborescence $Q$ where $\delta(v) = \dim \bA_v$. We denote this blowup by $Q_{\dim (\bA_\bullet)}$. For instance, if $Q$ is the arborescence on the left and  $\bA_\bullet$ is the representation (known only on the vertices) depicted in the middle, then the blowup $Q_{\dim (\bA_\bullet)}$ is shown to the right.

\begin{minipage}{0.6\textwidth}
 \begin{center} 
 \begin{tikzcd}
\bullet \arrow[r] & \bullet \arrow[rd]\arrow[r] & \bullet \\ 
 & & \bullet \\ 
  \end{tikzcd}
  \qquad 
\begin{tikzcd} 
\RR \arrow[r] & \RR^{2} \arrow[rd]\arrow[r] & \RR^{3} \\ 
 & & \RR^{2} \\ 
  \end{tikzcd}
\end{center}
\end{minipage}\begin{minipage}{0.3\textwidth}
\begin{center} 
  \begin{tikzpicture}
\path node (v1) at (2,3) {$\circ$};
\path node (v3) at (6,3) []{$\circ$};
\path node (v3a) at (6,2.5) []{$\circ$};
\path node (v3b) at (6,2) []{$\circ$};
\path node (v4) at (6,1) []{$\circ$};
\path node (v4a) at (6,0.5) []{$\circ$};
\path node (w1) at (4,3) []{$\circ$};
\path node (w1a) at (4,2.5) []{$\circ$};
\draw [->] (w1) -- (v4); 
\draw [->] (w1) -- (v4a); 
\draw [->] (w1a) -- (v4); 
\draw [->] (w1a) -- (v4a); 
\draw [->] (v1) -- (w1); 
\draw [->] (v1) -- (w1a); 
\draw [->] (w1) -- (v3); 
\draw [->] (w1) -- (v3a); 
\draw [->] (w1) -- (v3b); 
\draw [->] (w1a) -- (v3); 
\draw [->] (w1a) -- (v3a); 
\draw [->] (w1a) -- (v3b);
\end{tikzpicture}
\end{center}
\end{minipage}

\noindent The entries of the unknown matrices $\bA_e$ become unknown scalar weights on the edges of $Q_{\dim (\bA_\bullet)}$. Maximum likelihood estimation in the Gaussian graphical model learns the weights on these edges by minimising least squares error. Since this is equivalent to \eqref{eqn:optimiseA}, it gives an identical estimate for the unknown maps in the quiver representation. 

Although Definition \ref{def:quivblowup}  extends verbatim to the case where $Q$ is not an arborescence, the maximal likelihood estimation strategy described above is restricted to the setting of an arborescence. This is because 
weights of incoming edges at a vertex of the directed acyclic graph are summed over in a graphical model~\cite[Equation (13.2.3)]{sullivant2018algebraic}. By comparison, in the quiver setting we do not sum incoming edges from different vertices of the quiver in~\eqref{eqn:optimiseA}. 
Thus the above strategy only works when each vertex in the quiver has at most one incoming edge. 

The local assumption governing the choice of objective function in~\eqref{eqn:optimiseA} is that the maps $\bA_e$ can be learned independently of one another; this does not take into account the goodness of fit of data along longer paths in the quiver. Given such a path $p$, one may wish to minimise the distance between $y_{i,t(p)}$ and $\bA_p y_{i,s(p)}$. This yields an immediate generalisation of the objective function \eqref{eqn:optimiseA}, where one sums the contributions of each path in $Q$, rather than just over each edge. For acyclic quivers, such an optimisation can be approached by using a suitable partial order on edges, but it is more complicated for quivers with cycles. We defer a more general study of learning maps in quiver representations to future work. 

Our final example is an illustration of finding principal components along a learned quiver representation. This combines parameter estimation with principal component analysis, as is also seen in~\cite{wiesel2009decomposable,meng2012distributed,tang2021integrated}. 

\begin{example}
Consider once again the quiver with one edge $e: u \to v$ and representation $\RR^p \to \RR^q$ with unknown $\bA_e$. The best estimate is given by $(Y_u^+ Y_v)\T$, as described in Example~\ref{ex:onearrow}. Thus, a parameterisation of the space of sections $\Gamma(Q;\bA_\bullet)$ is given by
\[
F = \begin{bmatrix} I \\  (Y_u^+ Y_v)\T \end{bmatrix}.
\]
The top principal component along the quiver representation is the direction in the image of $F$ along which there is maximum variance in the data. This can be computed using Theorem~\ref{thm:obtain_qPCs} via the matrix pencil from Example~\ref{ex:pc_onearrow}, provided that we set $J = (Y_u^+Y_v)\T$. 
\end{example}

	\bibliographystyle{alpha}
	\bibliography{references}
	
\end{document}